\newtheorem{theor}{Theorem}
\newtheorem{corol}[theor]{Corollary}
\newtheorem{thm}{Theorem}[section]
\newtheorem{lem}[thm]{Lemma}
\newtheorem{cor}[thm]{Corollary}
\newtheorem{prop}[thm]{Proposition}
\theoremstyle{definition}
\newtheorem{defn}[thm]{Definition}
\newtheorem{notation}[thm]{Notation}
\newtheorem{conv}[thm]{Convention}
\newtheorem{rem}[thm]{Remark}
\begin{document}

\title[Geometric entropy]{Geometric entropy of geodesic currents on free groups}

\author[Ilya Kapovich]{Ilya Kapovich}

\address{\tt Department of Mathematics, University of Illinois at
  Urbana-Champaign, 1409 West Green Street, Urbana, IL 61801, USA
  \newline http://www.math.uiuc.edu/\~{}kapovich/} \email{\tt
  kapovich@math.uiuc.edu}

\author[Tatiana Nagnibeda]{Tatiana Nagnibeda}

\address{\tt
Section de math\'ematiques,
Universit\'e de Gen\`eve,
2-4, rue du Li\`evre, c.p. 64,
1211 Gen\`eve, Switzerland
\newline http://www.unige.ch/~tatiana/}
\email{\tt tatiana.smirnova-nagnibeda@unige.ch}

\thanks{The first author was supported by the NSF grants DMS-0603921
  and DMS-0904200. Both authors acknowledge the support of the Swiss National
  Foundation for Scientific Research. The first author also acknowledges the support of the Max-Planck-Institut f\"ur Mathematik, Bonn and the activity "Dynamical Numbers" organized there by Sergiy Kolyada.}

\begin{abstract}
A geodesic current on a free group $F$ is an $F$-invariant measure on the set $\partial^2 F$ of pairs of distinct points of the boundary $\partial F$. The main aim of this paper is to
introduce and study the notion of \emph{geometric entropy} $h_T(\mu)$ for a
geodesic current $\mu$ on a free group $F$ with respect to a point $T$
in the Outer Space $cv(F)$, $T$ thus being an $\mathbb R$-tree
equipped with a minimal free and discrete isometric action of $F$. The
geometric entropy $h_T(\mu)$ measures the slowest exponential decay
rate of the values of $\mu$ on cylinder sets in $T$, with respect
to the $T$-length of the segment defining such a cylinder.

We obtain an explicit formula for $h_{T'}(\mu_T)$, where $T,T'\in
cv(F)$ are arbitrary points and where $\mu_T$ is the Patterson-Sullivan
current corresponding to $T$, in terms of the volume entropy of $T$ and
the extremal distortion of distances in $T$ with respect to
distances in $T'$.

We conclude that for $T\in CV(F)$ (where $CV(F)\subseteq cv(F)$ is the
projectivized Outer space consisting of all elements of $cv(F)$ with
co-volume $1$) and for a Patterson-Sullivan current $\mu_T$
corresponding to $T$, the function $CV(F)\to \mathbb R$ mapping $T'$ to
$h_{T'}(\mu_T)$, achieves a strict global maximum at $T'=T$.

We also show that for any $T\in cv(F)$ and any geodesic current $\mu$
on $F$, we have $h_T(\mu)\le h(T)$, where $h(T)$ is the volume entropy
of $T$, and the equality is realized when $\mu=\mu_T$.
For points $T\in cv(F)$ with simplicial metric (where all edges have length one), we relate the geometric entropy of a current
and the measure-theoretic entropy.
\end{abstract}

\date{\today}

\subjclass[2000]{ Primary 20F65, Secondary 05C, 37A, 37E, 57M}

\keywords{Free groups, metric graphs, Patterson-Sullivan measures,
geodesic currents, volume entropy}

\maketitle



\section{Introduction}\label{intro}

In \cite{CV} Culler and Vogtmann introduced a free group analogue of the Teichm\"uller space of a hyperbolic surface now
known as Culler-Vogtmann's \emph{Outer space}. The Outer space proved to be a fundamental object in the study of
the outer automorphism group of a free group and of individual outer automorphisms.

Let $F$ be a free group of finite rank $k\ge 2$. The \emph{nonprojectivized Outer space $cv(F)$}
consists of all minimal free and discrete isometric actions of $F$ on $\mathbb R$-trees. Two trees in $cv(F)$ are
considered equal if there exists an $F$-equivariant isometry between them. Note that for every $T\in cv(F)$ the action
of $F$ on $T$ is cocompact. There are several topologies on $cv(F)$ that are all known to coincide~\cite{Pau89}: the
equivariant Gromov-Hausdorff convergence topology,  the point-wise translation length function convergence topology,
and the weak $CW$-topology (see Section~\ref{section:cv} below for more details). There is a natural continuous left action of
$Out(F)$ on $cv(F)$ that corresponds to pre-composing an action of $F$ on $T$ with the inverse of an automorphism of $F$.
One often works with the projectivized version $CV(F)$ of $cv(F)$, called the \emph{Outer space}, which consists of all
$T\in cv(F)$ such that the quotient graph $T/F$ has volume $1$. The space $CV(F)$ is a closed $Out(F)$-invariant subset of $cv(F)$.

A geodesic current is, in the context of negative curvature, a measure-theoretic generalization of the notion of a free homotopy class of a closed curve on a
surface and of the notion of a conjugacy class in a group.
Let $\partial F$ be the hyperbolic boundary of $F$ and let $\partial^2 F$ be the set of all pairs
$(\xi,\zeta)\in \partial F\times\partial F$ such that $\xi\ne \zeta$. There is a natural left translation action of
$F$ on $\partial F$ and hence on $\partial^2 F$. A \emph{geodesic current} on $F$ is a positive, finite on compact subsets,
Borel measure on
$\partial^2 F$ that is $F$-invariant. (One sometimes also requires currents to be invariant with respect to the "flip"
map $\partial^2 F\to \partial^2 F$, $(\xi,\zeta)\mapsto (\zeta,\xi)$, but we do not impose this restriction in this paper).
The space $Curr(F)$ of all geodesic currents on $F$ is locally compact and comes equipped with a natural continuous action of
$Out(F)$ by linear transformations.

The study of geodesic currents in the context of hyperbolic
surfaces was initiated by Bonahon~\cite{Bo86,Bo88}. Bonahon extended the notion of a geometric intersection number between
two (free homotopy classes of) closed curves on a hyperbolic surface to a symmetric and mapping-class-group invariant notion
of an intersection number between two geodesic currents. He also showed that the Liouville embedding of the Teichm\"uller space
into the space of projectivized geodesic currents extends to a topological embedding of Thurston's compactification of the
Teichm\"uller space.
The study of geodesic currents also proved useful in the context of free groups
(see, for example,~\cite{Ma,Ka,Ka1,Ka2,KL1,KL2,KL3,CHL,Fra}). Thus in \cite{Ka,Ka1} Kapovich constructed a
canonical Bonahon-type $Out(F)$-invariant continuous "intersection form" $I:cv(F)\times Curr(F)\to\mathbb R$.
In a recent paper~\cite{KL2} Kapovich and Lustig extended this intersection form to the "boundary" of $cv(F)$ and
constructed its continuous $Out(F)$-invariant extension $I:\overline{cv}\times Curr(F)\to\mathbb R$. Here $\overline{cv}(F)$
is the closure of $cv(F)$ in the equivariant Gromov-Hausdorff (or the length function) topology. It is known that $\overline{cv}(F)$
consists precisely of all the minimal \emph{very small} isometric actions of $F$ on $\mathbb R$-trees. The projectivization of
$\overline{cv}(F)$ gives the \emph{Thurston compactification} $\overline{CV}(F)=CV(F)\cup \partial CV(F)$ of the Outer space $CV(F)$.
Motivated by Bonahon's result, in \cite{KN} Kapovich and Nagnibeda constructed the \emph{Patterson-Sullivan map}
$CV(F)\to\mathbb P Curr(F)$ and proved that this map is an $Out(F)$-equivariant continuous embedding (here
$\mathbb P Curr(F)$ is the space of \emph{projectivized geodesic currents} on $F$). Since $\mathbb P Curr(F)$ is compact,
the closure of the image of $CV(F)$ under this map gives a compactification of $CV(F)$. However, unlike in the case of hyperbolic surfaces, this
compactification is not the same as Thurston's compactification $\overline{CV}(F)$ of $CV(F)$.
Kapovich and Lustig~\cite{KL1} proved moreover that there does not exist a continuous $Out(F)$-equivariant map
$\partial CV(F)\to \mathbb PCurr(F)$.

Let $T\in cv(F)$. Note that $T$ is a proper Gromov-hyperbolic geodesic
metric space. Denote by $\partial T$ the hyperbolic boundary of $T$
and by $\partial^2 T$ the set of all pairs $(\xi,\zeta)\in \partial^2
T$ such that $\xi\ne \zeta$. Thus for any $(\xi,\zeta)\in \partial^2
T$ there exists a unique bi-infinite (non-parameterized) oriented geodesic line
$[\xi,\zeta]\subseteq T$ in $T$ from $\xi$ to $\zeta$. We think of
$[\xi,\zeta]\subseteq T$ as the image of an isometric embedding from
$\mathbb R$ to $T$, with the correct choice of an orientation on
$[\xi,\zeta]$.

Since $F$ acts
discretely, isometrically and co-compactly on $T$, the orbit map (for
any basepoint in $T$) defines a quasi-isometry $q_T: F\to T$ (where
$F$ is taken with any word metric) and hence
a canonical $F$-equivariant homeomorphism $\partial q_T: F\to \partial
T$. In turn, $\partial q_T$ defines an $F$-equivariant homeomorphism
$\partial^2 q_T: \partial^2F\to\partial^2 T$.

We will use the homeomorphisms $\partial q_T$ and $\partial^2 q_T$
to identify $\partial F$ with $\partial T$ and, similarly,
$\partial^2 F$ with $\partial^2 T$.
We will often suppress this explicit identification.

The \emph{volume entropy} $h=h(T)$ is defined as
\[
h(T)=\lim_{R\to\infty}\frac{\log \#\{g\in F: d_T(x_0,gx_0)\le R  \}}{R},
\]
where $x_0\in T$ is a basepoint. It is well known  (\cite{Coor}) that the limit
always exists and does not depend on the choice of a basepoint $x_0\in
T$. It also coincides with the \emph{critical
  exponent of the Poincar\'e series} :
\[
\Pi_{x_0}(s):=\sum_{g\in F} e^{-s\ d_T(x_0,gx_0)}\ ,
\]
namely, $\Pi_{x_0}(s)$ converges for all $s>h$ and diverges for all $s<h$.
Moreover, for every $x_0\in T$,  as $s\to h+$, any weak limit $\nu$ of the probability measures
  \[
  \frac{1}{\Pi_{x_0}(s)} \sum_{g\in F} e^{-s\ d(x_0,gx_0)} {\rm Dirac}(gx_0).
  \]
is a measure supported on $\partial T$. The measure-class of $\nu$ is uniquely determined and does not depend on the
choice of $x_0$ or on the choice of a weak limit.
Any such $\nu$ is called \emph{a Patterson-Sullivan measure} on $\partial T=\partial F$ corresponding to $T\in cv(F)$.

Furman proved in \cite{Fur}, in a wider context of hyperbolic groups, that there exists a unique, up to a scalar multiple, $F$-invariant and
flip-invariant nonzero locally finite measure $\mu_T$ on $\partial^2 T$
in the measure class of $\nu\times \nu$.
 Such a measure $\mu_T$ is called a
\emph{Patterson-Sullivan current} for $T\in cv(F)$.
Since $\mu_T$ is unique up to a scalar multiple, its projective class
$[\mu_T]$ is called \emph{the projective Patterson-Sullivan
  current} corresponding to $T\in cv(F)$.
Moreover, Furman's results imply that for $T\in cv(F)$ the projective Patterson-Sullivan current corresponding to $T$ depends only on the projective class $[T]$ of $T$, and thus allow to define the Patterson-Sullivan map $CV(F)\to\mathbb P Curr(F) ; [T]\mapsto\mu_T$.
We refer the reader to \cite{KN} for a more detailed discussion.

Let $T\in cv(F)$. Let $x,y\in T, x\ne y$, and $[x,y]$ denote the unique simplicial geodesic between $x$ and $y$ in $T$.
Denote
\begin{gather*}
  Cyl_{[x,y]}^T=Cyl_{[x,y]}:= \{(\zeta_1,\zeta_2)\in \partial^2 F: [x,y]\subseteq
  [\partial q_T(\zeta_1),\partial q_T(\zeta_2)] \\ \text{ and the orientations on $[x,y]$ and on
    $[\zeta_1,\zeta_2]$ agree}\}
\end{gather*}
 the \emph{two-sided cylinder set
  corresponding to $[x,y]$}.

For a fixed $T\in cv(F)$, any current $\mu\in Curr(F)$ is uniquely
determined by its values on the cylinder sets $Cyl_{[x,y]}\subseteq
\partial^2 F$, where $[x,y]$ varies over all nondegenerate geodesic
segments in $T$.  Note that since $\mu$ is $F$-invariant, the value
$\mu(Cyl_{[x,y]})$ depends only on $\mu$ and the path which is the
image of $[x,y]$ in the quotient graph $T/F$.  The "weights"
$\mu(Cyl_{[x,y]})$ tend to $0$ as $d_T(x,y)\to\infty$, and in many
interesting cases, as for example in that of Patterson-Sullivan
currents, this convergence is exponential. We introduce the notion of
\emph{geometric entropy} $h_T(\mu)$ of $\mu$ with respect to $T$ to
measure the slowest exponential rate of decay of the weights
$\mu(Cyl_{[x,y]})$ as $d_T(x,y)$ tends to infinity. More precisely
(see Definition~\ref{defn:ge} below):
\[
h_T(\mu):=\liminf_{d_T(x,y)\to\infty} \frac{-\log \mu (Cyl_{[x,y]})
}{d_T(x,y)}.
\]

We first establish some basic properties of geometric entropy in Section~\ref{sect:ge}. In particular
$h_T(\mu)=h_T(c\mu)$ for any $c>0$, $\mu\in Curr(F)$, so that $h_T(\mu)$ depends only on the projective class of $\mu$.
We note that for a fixed $\mu\in Curr(F)$ the function $E_\mu:cv(F)\to \mathbb R$, $T\mapsto h_T(\mu)$ is continuous
(Proposition~\ref{prop:cont}).
On the other hand, for any $T\in cv(F)$, the function $h_T: Curr(F)\to \mathbb R$, $\mu\mapsto h_T(\mu)$ is highly discontinuous.
Indeed, there  is a dense subset in $Curr(F)$ consisting of so-called "rational" currents (see Definition 5.1 in
\cite{Ka1}), whose geometric entropy is zero.
On the other hand
there are many currents with positive geometric entropy.

We obtain an explicit formula for the geometric entropy of a Patterson-Sullivan current $\mu_T$ of $T\in cv(F)$ with respect to an
arbitrary $T'\in cv(F)$. The geometric entropy of $\mu_T$ with respect to $T$ coincides with the volume entropy $h(T)$.
 We then solve two types of extremal problems regarding maximal values of the geometric entropy with either the tree or
the current arguments fixed.
Our main results are the following.

\begin{theor}\label{A} (Corollary \ref{cor:ps} and Theorem \ref{thm:comp1}).
Let $T\in cv(F)$ and let $\mu_T\in Curr(F)$ be a Patterson-Sullivan current corresponding to $T$.
Let $h(T)$ be the volume entropy of $T$.
Then
\begin{enumerate}
\item  $h_T(\mu_T)=h(T)$;
\item for any $T'\in cv(F)$
  \[
h_{T'}(\mu_T)= h(T)\inf_{g\in F\setminus\{1\}}
\frac{||g||_T}{||g||_{T'}}=\frac{h(T)}{\displaystyle \sup_{g\in F\setminus\{1\}} \frac{||g||_{T'}}{||g||_T}} .
\]
\end{enumerate}
Here, for $f\in F$ and $T\in cv(F)$, $||f||_T:=\inf_{x\in T} d_T(x,fx)$ is the \emph{translation length} of $f$.
\end{theor}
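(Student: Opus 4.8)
The plan is to reduce both statements to the behaviour of the Patterson--Sullivan current on cylinder sets, which by the Furman theory and the coarse invariance of the geometry is controlled, up to bounded multiplicative error, by an exponential of the $T$-length of the defining segment. Concretely, I would first record the key estimate that there are constants $0 < c_1 \le c_2 < \infty$ (depending on $T$) such that for every nondegenerate segment $[x,y]$ in $T$ one has
\[
c_1 e^{-h(T)\, d_T(x,y)} \le \mu_T(Cyl_{[x,y]}^T) \le c_2 e^{-h(T)\, d_T(x,y)}.
\]
This is essentially the defining property of a Patterson--Sullivan measure: the measure $\nu$ on $\partial T$ is a quasi-conformal density of dimension $h(T)$, and $\mu_T$ lies in the class of $\nu \times \nu$, so the $\mu_T$-mass of a ``box'' $Cyl_{[x,y]}^T$ of endpoints is comparable to $e^{-h(T) d_T(x,y)}$ via the standard Gromov-product/shadow-lemma computation; for trees the Gromov product is exact and the shadow lemma takes this clean form. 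Granting this estimate, part (1) is immediate: $\frac{-\log \mu_T(Cyl_{[x,y]}^T)}{d_T(x,y)} \to h(T)$ as $d_T(x,y)\to\infty$, so the $\liminf$ defining $h_T(\mu_T)$ equals $h(T)$.

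For part (2) I would combine this estimate (now for the fixed tree $T$) with the comparison of metrics between $T$ and $T'$. The subtlety is that $h_{T'}(\mu_T)$ is computed by running over cylinders $Cyl_{[x',y']}^{T'}$ indexed by $T'$-segments, whereas the mass estimate above is in terms of $T$-length. So I would use that a $T'$-geodesic segment $[x',y']$ carries an underlying pair of boundary points, and the same pair determines a $T$-geodesic whose length I must relate to $d_{T'}(x',y')$. The extremal ratio $\Lambda := \sup_{g\ne 1} \frac{\|g\|_{T'}}{\|g\|_T}$ (equivalently $\inf_g \|g\|_T/\|g\|_{T'} = 1/\Lambda$) is exactly the optimal Lipschitz-type constant controlling how $T$-length of long segments can shrink relative to $T'$-length: for segments arising as longer and longer pieces of axes of group elements $g$ with $\|g\|_{T'}/\|g\|_T$ approaching $\Lambda$, the ratio $d_T(x,y)/d_{T'}(x',y')$ approaches $1/\Lambda$, and this is the worst (slowest-decay) case. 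Hence
\[
h_{T'}(\mu_T) = \liminf \frac{-\log \mu_T(Cyl_{[x',y']}^{T'})}{d_{T'}(x',y')} = \liminf \frac{h(T)\, d_T(x,y) + O(1)}{d_{T'}(x',y')} = \frac{h(T)}{\Lambda} = h(T)\inf_{g\ne 1}\frac{\|g\|_T}{\|g\|_{T'}}.
\]

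The steps, in order, are: (i) prove the two-sided mass estimate for $\mu_T$ on $T$-cylinders, citing Furman's construction and the shadow lemma for trees; (ii) show $\inf_{g\ne 1}\|g\|_T/\|g\|_{T'}$ equals the ``extremal distortion'' constant governing long segments, i.e.\ that for any sequence of segments with $d_T(x_n,y_n)\to\infty$ we have $\liminf d_T(x_n,y_n)/d_{T'}(x_n,y_n) \ge \inf_g \|g\|_T/\|g\|_{T'}$, and that this bound is attained along axes of suitable $g_n$ (approximating the infimum by periodic geodesics, using that periodic lines are dense); (iii) assemble (i) and (ii) to get the displayed formula, with part (1) as the special case $T'=T$ where the constant is $1$. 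The main obstacle is step (ii): making precise that the relevant ``extremal distortion'' of long $T$-segments relative to $T'$-segments is computed by translation-length ratios rather than by some larger pointwise Lipschitz constant, which requires showing that the worst distortion is realized asymptotically along periodic geodesics and that finite-segment endpoint effects (the $O(1)$ coming from Gromov products and from the quasi-isometry between $T$ and $T'$) wash out in the $\liminf$. I expect this to follow from a standard ``approximate any geodesic lamination/segment by closed geodesics'' argument together with the fact that $\|g\|_T$ and $d_T$ differ by a bounded amount along axes, but it is the place where care is needed.
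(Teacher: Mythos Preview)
Your proposal is correct and follows essentially the same route as the paper: the two-sided mass estimate for $\mu_T$ on $T$-cylinders (your step~(i)) is exactly the paper's Proposition~\ref{prop:KN}, from which part~(1) is immediate (Corollary~\ref{cor:ps}); your step~(ii) is the content of Lemma~\ref{lem:stretch}; and the assembly~(iii) is the combination of Proposition~\ref{prop:comp} (upper bound, via axes of $g^n$) and Theorem~\ref{thm:lowerbound} (lower bound, via approximating arbitrary long segments by periodic ones).

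Two remarks on packaging. First, the paper abstracts your two-sided mass estimate into the notion of a \emph{tame} current (Definition~\ref{defn:tameT}): the lower bound $h_{T'}(\mu)\ge h_T(\mu)\inf_g\|g\|_T/\|g\|_{T'}$ is proved for \emph{any} tame $\mu$, not just $\mu_T$, and tameness of $\mu_T$ is read off from the mass estimate. Second, you slightly understate what is needed beyond the metric distortion in step~(ii): to pass from $\mu_T(Cyl^{T'}_{[x',y']})$ to $e^{-h(T)d_T(x,y)}$ you need not only $d_T(x,y)/d_{T'}(x',y')$ but also that the $T'$-cylinder, as a subset of $\partial^2 F$, is sandwiched between two $T$-cylinders whose defining segments differ from $[\phi(x'),\phi(y')]$ by bounded amounts. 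The paper isolates this as a pair of ``bounded cancellation'' lemmas for $F$-equivariant quasi-isometries (Lemma~\ref{lem:qi} and Proposition~\ref{prop:qi1}), giving inclusions in both directions; combined with tameness this yields your asserted $O(1)$. Your sketch is right that this washes out in the $\liminf$, but it is a separate ingredient from the translation-length comparison and deserves its own statement.
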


It is known~\cite{Wh,Ka,Ka1} that
 \[
\inf_{g\in F\setminus\{1\}} \frac{||g||_T}{||g||_{T'}}=\min_{g\in F\setminus\{1\}} \frac{||g||_T}{||g||_{T'}},\quad \sup_{f\in F\setminus\{1\}}
\frac{||f||_T}{||f||_{T'}}=\max_{f\in F\setminus\{1\}} \frac{||f||_T}{||f||_{T'}}
\]
and, moreover, one can algorithmically find $g,f\in F$ realizing the above equalities in a
finite subset of $F\setminus\{1\}$ depending only on $T$. It follows that the geometric entropy as function of $T'$
admits a continuous and strictly positive extension to $\overline{cv}(F)$.

The extremal distortions of the trees $T$ and $T'$ with respect to each other which appear in Theorem~\ref{A} are key ingredients in
the recent construction by
Francaviglia and Martino~\cite{FM} of asymmetric metrics on the
Outer space. Their construction is inspired by
Thurston's work on mutual extremal stretching factors (extremal Lipshitz constants) of two points in the
Teichm\"uller space~\cite{Thurston}.

We further use Theorem~\ref{A} to compute extremal values of $h_{T'}(\mu_T)$ as function of $T'\in CV(F)$ and show that this function
achieves its strict maximum at $T$.
\begin{corol}\label{B} (Corollaries \ref{cor:tw} and \ref{cor:inf}).
Let $T,T'\in CV(F)$ be such that $T\ne T'$. Let $\mu_T\in Curr(F)$ be a Patterson-Sullivan current corresponding to $T$ and
let $h(T)$ be the volume entropy of $T$. Then
\begin{enumerate}
\item for any $T'\in CV(F)$ such that $T'\ne T$
  \[
h_{T'}(\mu_T)<h_T(\mu_T)=h(T);
\]
\item we have
  \[
\inf_{T'\in CV(F)}h_{T'}(\mu_T)=0.
\]
\end{enumerate}
\end{corol}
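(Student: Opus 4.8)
The plan is to derive both statements directly from Theorem~\ref{A}. Recall that for $T\in CV(F)$ and $T'\in cv(F)$ Theorem~\ref{A} yields
\[
h_T(\mu_T)=h(T)\qquad\text{and}\qquad h_{T'}(\mu_T)=\frac{h(T)}{\Lambda(T,T')},\qquad\text{where}\qquad\Lambda(T,T'):=\sup_{g\in F\setminus\{1\}}\frac{||g||_{T'}}{||g||_T}.
\]
Here $h(T)>0$ since $F$ has exponential growth, and $\Lambda(T,T')\in[1,\infty)$ for $T,T'\in CV(F)$, the finiteness being White's theorem (\cite{Wh}; see also \cite{FM}); hence $h_{T'}(\mu_T)\in(0,h(T)]$ for every $T'\in CV(F)$, and in particular $h_{T'}(\mu_T)\ge0$. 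So everything reduces to the behaviour of the extremal distortion $\Lambda(T,T')$ in the variable $T'$.

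For part (1) it is enough to show $\Lambda(T,T')>1$ whenever $T\ne T'$ in $CV(F)$, since then $h_{T'}(\mu_T)=h(T)/\Lambda(T,T')<h(T)=h_T(\mu_T)$. I would prove this inequality by the volume estimate underlying the Francaviglia--Martino asymmetric metric: by White's theorem there is an $F$-equivariant \emph{surjection} $f\colon T\to T'$ with $\mathrm{Lip}(f)=\Lambda(T,T')=:\lambda$ (surjectivity because the image of $f$ is a nonempty $F$-invariant subtree of $T'$, hence equals $T'$ by minimality of the action), and $f$ descends to a $\lambda$-Lipschitz surjection $\bar f\colon T/F\to T'/F$ between finite metric graphs of volume $1$. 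Covering $T'/F$ by the images of the finitely many edges of $T/F$ gives
\[
1=\mathrm{vol}(T'/F)\le\sum_{e}\mathrm{length}(\bar f(e))\le\lambda\sum_{e}\mathrm{length}(e)=\lambda\cdot\mathrm{vol}(T/F)=\lambda,
\]
so $\Lambda(T,T')\ge1$; and if $\Lambda(T,T')=1$ both inequalities are equalities, which forces $\bar f$ to be length-preserving and locally injective on edges with pairwise interior-disjoint edge images, so that $\bar f$ lifts to an $F$-equivariant isometry $T\to T'$ and $T=T'$ in $CV(F)$ (this is exactly the non-degeneracy of the asymmetric metric $\log\Lambda(\cdot,\cdot)$, cf.~\cite{FM}). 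This proves (1).

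For part (2) it suffices, by the displayed formula, to produce $T_n'\in CV(F)$ with $\Lambda(T,T_n')\to\infty$, and for that it is enough to make the $T_n'$-length of a \emph{fixed} nontrivial element grow without bound. Fix a free basis $a_1,\dots,a_k$ of $F$ and let $T_n'\in CV(F)$ be represented by the metric rose $R$ with petals $e_1,\dots,e_k$ of length $1/k$, equipped with the marking $m_n\colon F\to\pi_1(R)=F(e_1,\dots,e_k)$ given by $m_n(a_1)=e_1e_2^{\,n}$ and $m_n(a_i)=e_i$ for $i\ge2$; this is a legitimate marking because $\{e_1e_2^{\,n},e_2,\dots,e_k\}$ is a free basis of $F(e_1,\dots,e_k)$. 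Then $||a_1||_{T_n'}=(n+1)/k$ (the $R$-length of the cyclically reduced word $e_1e_2^{\,n}$), while $||a_1||_T$ is a fixed positive constant, so Theorem~\ref{A} gives
\[
0\le h_{T_n'}(\mu_T)=h(T)\inf_{g\in F\setminus\{1\}}\frac{||g||_T}{||g||_{T_n'}}\le h(T)\,\frac{||a_1||_T}{||a_1||_{T_n'}}=\frac{k\,h(T)\,||a_1||_T}{\,n+1\,},
\]
and the right-hand side tends to $0$ as $n\to\infty$; hence $\inf_{T'\in CV(F)}h_{T'}(\mu_T)=0$. All of this is routine once Theorem~\ref{A} is available; the only point requiring care is the rigidity statement ``$\Lambda(T,T')>1$ for $T\ne T'$ in $CV(F)$'' used in part (1), and even that is well known, so I do not expect a genuine obstacle here --- the real content of Corollary~\ref{B} is already contained in Theorem~\ref{A}.
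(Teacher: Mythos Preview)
Your proof is correct and follows essentially the same route as the paper: both parts are reduced via Theorem~\ref{A} to statements about the extremal distortion, with part~(1) coming from White's rigidity (the paper simply cites \cite{Wh}, while you sketch the volume argument \`a la \cite{FM}) and part~(2) from exhibiting a sequence in $CV(F)$ along which the translation length of a fixed element tends to infinity (the paper uses an unspecified sequence $\phi_n\in Out(F)$, you write one down explicitly). These are elaborations rather than genuinely different arguments.
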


We then proceed to study the geometric entropy as a function of $\mu\in Curr(F)$. Although it is highly discontinuous, we compute
its maximal value. Given a current $\mu\in Curr(F)$, we consider a family of measures $\{\mu_x\}_{x\in T}$ on $\partial F$
defined by their values on {\it one-sided cylinder subsets} of
$\partial F$:
\[
Cyl_{[x,y]}^x := \{\xi\in \partial F:  \text{ the geodesic ray }
  [x,\partial_T(\xi)] \text{ in $T$ begins with } [x,y]\}\subseteq \partial F,
\]
\[\mu_x (Cyl_{[x,y]}^x) := \mu (Cyl_{[x,y]}) .\]
If $\mu\in Curr(F)$, $\mu\ne 0$ then there is $x\in T$ such that
$\mu_x\ne 0$ (it is enough to take a segment $[x,y])$ such that $\mu
(Cyl_{[x,y]})\ne 0$). Note however, that the action of $F$ on the set
of vertices of $T$ is not necessarily transitive and it may happen
that $\mu\ne 0$ but for some vertex $x$ of $T$ we have $\mu_x=0$.

If $\mu_T$ is a Patterson-Sullivan current corresponding to $T$ then $\mu_x$ is a Patterson-Sullivan measure on $\partial F$
corresponding to $T$ (see \cite{KN}).

\begin{theor}\label{C} (Theorem~\ref{thm:vol} and Corollary~\ref{cor:sharp}.)
Let $T\in cv(F)$ and let $h=h(T)$ be the volume entropy of
$T$.
\begin{enumerate}
\item Let $\mu\in Curr(F)$, $\mu\ne 0$, and let $x\in T$ be such that $\mu_x\ne 0$.
Then
\[
h_T(\mu)\le {\mathbf {HD}} _{\partial T}(\mu_x)\le h_T(\mu_T)=h(T) ,
\]
where ${\mathbf {HD}} _{\partial T}(\mu_x)$ is the Hausdorff dimension of $\mu_x$ with respect to $\partial T$ with the metric $d_x$
(definitions are recalled in the beginning of Section~\ref{sect:hd}).

\item For $T\in cv(F)$ denote by $[T]$ the \emph{projective class} of $T$ that is, the set of all $cT\in cv(F)$ where $c>0$.
If $T'\in cv(F)$ is such that $[T']\ne [T]$, then
\[
h_T(\mu_{T'})< h(T).
\]
\end{enumerate}
\end{theor}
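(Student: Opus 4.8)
My plan is to establish the two assertions by quite different means: (1) by a local computation that turns metric balls in $(\partial T,d_x)$ into cylinder sets, followed by Billingsley's lemma; and (2) by combining the formula of Theorem~\ref{A}(2) with a strict-monotonicity property of volume entropy under domination of translation length functions.

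\emph{Assertion (1).} First I would record that on the tree $T$ the metric $d_x$ is the visual metric $d_x(\xi,\zeta)=e^{-(\xi|\zeta)_x}$, where the Gromov product $(\xi|\zeta)_x$ is the $T$-distance from $x$ to the point where the rays $[x,\xi)$ and $[x,\zeta)$ part. Hence for every $\xi\in\partial T$ and $t>0$ the ball $B_{d_x}(\xi,e^{-t})$ is a one-sided cylinder $Cyl^x_{[x,w]}$, with $w$ a vertex on $[x,\xi)$ at distance $t+O(1)$ from $x$, so $\mu_x\bigl(B_{d_x}(\xi,e^{-t})\bigr)=\mu(Cyl_{[x,w]})$ and the lower pointwise dimension of $\mu_x$ at $\xi$ equals $\underline d_{\mu_x}(\xi)=\liminf_{t\to\infty}\frac{-\log\mu(Cyl_{[x,w(t)]})}{t}$ — a liminf over the one-parameter subfamily of segments lying along $[x,\xi)$. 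Since $h_T(\mu)$ is the liminf over \emph{all} nondegenerate segments of $T$, this forces $h_T(\mu)\le\underline d_{\mu_x}(\xi)$ for every $\xi\in\partial T$, and Billingsley's lemma (if $\mu_x(A)>0$ and $\underline d_{\mu_x}\ge s$ on $A$ then $\mathbf{HD}_{\partial T}(A)\ge s$) then yields $\mathbf{HD}_{\partial T}(\mu_x)\ge h_T(\mu)$, for either of the usual conventions for the dimension of a measure. For the middle inequality I would use that the dimension of a nonzero measure is at most that of the ambient space, together with the fact that the Hausdorff dimension of $(\partial T,d_x)$ equals $h(T)$: covering $\partial T$ by the $e^{h(T)t+o(t)}$ cylinders coming from the $T$-points at distance $t$ from $x$, each of $d_x$-diameter $\le e^{-t}$, gives $\le h(T)$, while the Ahlfors $h(T)$-regularity of a Patterson--Sullivan measure on $\partial T$ \cite{Coor} with the mass distribution principle gives $\ge h(T)$. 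Combined with $h_T(\mu_T)=h(T)$ (Theorem~\ref{A}(1)) this is the stated chain; and for $\mu=\mu_T$ the measure $(\mu_T)_x$ is itself a Patterson--Sullivan measure for $T$, hence Ahlfors $h(T)$-regular for $d_x$, so $\underline d_{(\mu_T)_x}\equiv h(T)$ and the chain collapses to an equality.

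\emph{Assertion (2).} By Theorem~\ref{A}(2), $h_T(\mu_{T'})=h(T')/\lambda$ with $\lambda:=\sup_{g\ne1}||g||_T/||g||_{T'}$, which by \cite{Wh,FM} is the attained optimal Lipschitz constant of an $F$-equivariant map $T'\to T$. Rescaling, set $T_1:=\lambda T'\in cv(F)$, so that $||g||_{T_1}\ge||g||_T$ for all $g\ne1$ (with equality for some $g$), $h(T_1)=h(T')/\lambda=h_T(\mu_{T'})$, and $T_1=T$ iff $[T']=[T]$. It then suffices to prove the rigidity: \emph{if $T,T_1\in cv(F)$ satisfy $||g||_{T_1}\ge||g||_T$ for all $g\ne1$ and $T_1\ne T$, then $h(T_1)<h(T)$.} Let $c\ge1$ be the generic value, over conjugacy classes $[g]$ with $||g||_{T_1}\le R$ as $R\to\infty$, of $||g||_{T_1}/||g||_T$. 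Since almost all such $[g]$ have $||g||_{T_1}\approx R$, equidistribution of closed geodesics on the finite metric graph $T_1/F$ gives $||g||_T\approx R/c$ for almost all of them; as there are $e^{h(T_1)R+o(R)}$ of them, $\#\{[g]:||g||_T\le R/c+o(R)\}\ge e^{h(T_1)R+o(R)}$, and since the exponential growth rate of conjugacy classes equals the volume entropy this gives $h(T_1)\le h(T)/c$. If $c>1$ we are done. If $c=1$, fix an optimal ($1$-Lipschitz) equivariant $f\colon T_1\to T$ of local slope $\psi\le1$; then $||g||_T/||g||_{T_1}$ is bounded by the average of $\psi$ along the closed $T_1$-geodesic of $g$, itself $\le1$, and its generic value is $1/c=1$, so that average is generically $1$; as $\psi\le1$ and the maximal-entropy measure has full support, $\psi\equiv1$, so $f$ is an isometric embedding on each edge; a fold in $f$ would make the generic average $<1$ (backtracking), contradicting $c=1$, so $f$ is locally injective, hence a local isometry, hence (by minimality and $\pi_1$-surjectivity) an isometry — contradicting $T_1\ne T$. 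This yields $h_T(\mu_{T'})=h(T_1)<h(T)$. An alternative route uses Assertion (1): it reduces (2) to $\mathbf{HD}_{\partial T}((\mu_{T'})_x)<h(T)$, which holds because $(\mu_{T'})_x$ is the boundary datum of an ergodic, non-maximal-entropy invariant measure for the $T$-geodesic flow (using \cite{KN} and uniqueness of the measure of maximal entropy), but that route needs the extra identification of $(\mu_{T'})_x$ with a measure in the Patterson--Sullivan class of $T'$.

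\emph{Main obstacle.} The one genuinely non-routine point is the strict inequality in the reduced statement of (2) — entropy rigidity: a strictly dominating tree in $cv(F)$ has strictly smaller volume entropy. The non-strict half is immediate, but strictness requires the ergodic theory of the geodesic flow on $T_1/F$: one needs that an exponentially generic long closed geodesic samples the whole graph with the maximal-entropy statistics, so that the ``slack'' of the optimal map $f$ on a positive-measure set is felt by almost every conjugacy class. Beyond that, the careful statements of the classical ingredients — Billingsley's lemma, the Ahlfors regularity of Patterson--Sullivan measures on $\partial T$, the equality of conjugacy growth rate with volume entropy, and (for the alternative route) that $(\mu_{T'})_x$ lies in the Patterson--Sullivan class of $T'$ — should account for the bulk of the remaining work.
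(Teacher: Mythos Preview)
Your argument for part~(1) is essentially the paper's: both identify $d_x$-balls in $\partial T$ with one-sided cylinders and pass from lower pointwise dimension to Hausdorff dimension of the measure. You invoke Billingsley's lemma directly; the paper quotes Kaimanovich's formula $\mathbf{HD}_{\partial T}(\nu)=\operatorname{ess\,sup}_\xi\liminf_k\frac{-\log\nu(B_x(\xi,k))}{k}$ from \cite{Kaim98}, which packages the same thing. The upper bound via $\mathbf{HD}(\partial T,d_x)=h(T)$ is handled identically (citing \cite{Coor}).

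For part~(2) your route diverges genuinely from the paper's. You reduce, via Theorem~\ref{A}(2) and rescaling, to the entropy-rigidity statement ``$\|g\|_{T_1}\ge\|g\|_T$ for all $g$ and $T_1\ne T$ implies $h(T_1)<h(T)$'', and then attack this with equidistribution of closed geodesics on $T_1/F$ toward the measure of maximal entropy, combined with the structure of an optimal $1$-Lipschitz equivariant map $T_1\to T$ (slope analysis, then exclusion of folds). The paper instead argues by contradiction directly on the Patterson--Sullivan currents: assuming $h_{T}(\mu_{T'})=h(T)$ (after normalizing $h(T)=h(T')$), it uses the two-sided exponential estimate $\mu_{T'}(Cyl_{[x,y]})\asymp e^{-h(T')d_{T'}(x,y)}$ from \cite{KN} and tameness to force $\sup_g\|g\|_{T}/\|g\|_{T'}\le 1$, hence a uniform bound $\mu_{T'}(Cyl)\le C\,\mu_{T}(Cyl)$ on cylinders; absolute continuity of $\mu_{T'}$ with respect to $\mu_T$ then feeds into Furman's rigidity theorem~\cite{Fur} to conclude $[T]=[T']$. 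Your approach is geometrically transparent and yields the intermediate entropy-rigidity statement as a dividend, but it imports heavier dynamics (Bowen-type equidistribution for the suspension flow over a subshift of finite type, and the existence and positivity properties of the generic stretching factor) that you correctly flag as the main obstacle and that would need precise citations to make rigorous. The paper's argument is shorter and stays within the measure-theoretic framework already set up (Proposition~\ref{prop:KN}, tameness, Lemma~\ref{lem:stretch}), with Furman's theorem doing the rigidity work in one stroke. Your ``alternative route'' via uniqueness of the measure of maximal entropy is closer in spirit to the paper, though the paper does not pass through $(\mu_{T'})_x$ or measure-theoretic entropy for this step; that uniqueness argument appears separately in Corollary~\ref{cor:ks}, restricted to simplicial $T$.
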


Part (1) of Theorem~\ref{C} implies that
\[
h(T)=h_T(\mu_T)=\max_{\mu\in Curr(F)-\{0\}} h_T(\mu).
\]
As was observed in \cite{KKS}, if $T_A\in cv(F)$ is the Cayley graph of $F$ with respect to a free basis $A$ and if $T\in cv(F)$ is
arbitrary, then
\[
{\bf HD}_{\partial T}(m_A)=\frac{\log(2k-1)}{\lambda_A(T)}
\]
where $k\ge 2$ is the rank of $F$, where $m_A$ is the "uniform" measure on $\partial F$ corresponding to $A$, and where $\lambda_A(T)$
is the "generic stretching factor" of $T$ with respect to $A$. That is, for an element $w_n\in F$ obtained by a simple
non-backtracking random walk on $T_A$ of length $n$, we have $||w_n||_T/||w_n||_A\to \lambda_A(T)$ as $n\to\infty$. Note that
in this case $h(T_A)=\log(2k-1)$ and $m_A=(\mu_{T_A})_x$ for $x$ being the vertex of the Cayley graph $T_A$ of $F$ corresponding
to $1\in F$. Also, obviously $\lambda_A(T)\le \sup_{g\in F\setminus\{1\}}\frac{||g||_T}{||g||_A}$.  Thus part (2) of Theorem~\ref{C} agrees
with these observations since it says that
\[
h_{T}(\mu_{T_A})=\frac{\log(2k-1)}{\sup_{g\in F\setminus\{1\}}\frac{||g||_T}{||g||_A}}\le {\bf HD}_{\partial T}(m_A)=
\frac{\log(2k-1)}{\lambda_A(T)}.
\]

These observations also suggest that if $T_0\in cv(F)$ is arbitrary (not necessarily corresponding to a free basis) and
if $T\in cv(F)$, one can define the "generic stretching factor" of $T$ with respect to $T_0$ as $\lambda_{T_0}(T):=
\frac{h(T_0)}{{\bf HD}_{\partial T}(\mu_0)}$ where $\mu_0$ is a Patterson-Sullivan measure on $\partial F$ corresponding to $T_0$.

Combining Theorem~\ref{A} and Theorem~\ref{C} we obtain:

\begin{corol}\label{D}
Let $T,T'\in cv(F)$.
\begin{enumerate}
\item
\[
\inf_{g\in F\setminus\{1\}} \frac{||g||_T}{||g||_{T'}}\le \frac{h(T')}{h(T)}\le \sup_{g\in F\setminus\{1\}} \frac{||g||_T}{||g||_{T'}}.
\]
\item Suppose that $[T]\ne [T']$. Then
\[
\inf_{g\in F\setminus\{1\}} \frac{||g||_T}{||g||_{T'}}< \frac{h(T')}{h(T)}< \sup_{g\in F\setminus\{1\}} \frac{||g||_T}{||g||_{T'}}.
\]
\end{enumerate}
\end{corol}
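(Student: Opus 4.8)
The plan is to obtain Corollary~\ref{D} as a purely formal consequence of Theorem~\ref{A}(2) together with Theorem~\ref{C}, applying each of these with the two trees $T$ and $T'$ playing both roles in turn. Throughout I use that for any $S\in cv(F)$ the volume entropy $h(S)$ is finite and strictly positive (since $F$ has rank $\ge 2$ and acts freely, discretely and cocompactly on $S$), so that the ratio $h(T')/h(T)$ makes sense, and that a Patterson--Sullivan current is nonzero and its geometric entropy does not depend on the choice of scalar multiple, so that $h_T(\mu_{T'})$ and $h_{T'}(\mu_T)$ are well defined.

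First I would record the elementary identity
\[
\sup_{g\in F\setminus\{1\}}\frac{||g||_{T'}}{||g||_T}=\left(\inf_{g\in F\setminus\{1\}}\frac{||g||_T}{||g||_{T'}}\right)^{-1},
\]
valid because all the translation lengths involved are positive. This makes explicit that the two expressions on the right-hand side of Theorem~\ref{A}(2) are reciprocals of one another, which is what lets us convert that single formula into both bounds of Corollary~\ref{D}.

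For part (1): apply Theorem~\ref{A}(2) with $T$ and $T'$ interchanged to get $h_T(\mu_{T'})=h(T')\inf_{g}\frac{||g||_{T'}}{||g||_T}$. By Theorem~\ref{C}(1) --- specifically by its displayed consequence $h(T)=h_T(\mu_T)=\max_{\mu\in Curr(F)\setminus\{0\}}h_T(\mu)$ --- we have $h_T(\mu_{T'})\le h(T)$. Combining these two facts and dividing by the positive quantity $h(T)\inf_g\frac{||g||_{T'}}{||g||_T}$ yields $\frac{h(T')}{h(T)}\le \left(\inf_g\frac{||g||_{T'}}{||g||_T}\right)^{-1}=\sup_g\frac{||g||_T}{||g||_{T'}}$, the upper bound in (1). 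Running the identical argument with $T$ and $T'$ in their original positions --- Theorem~\ref{A}(2) gives $h_{T'}(\mu_T)=h(T)\inf_g\frac{||g||_T}{||g||_{T'}}$, while Theorem~\ref{C}(1) gives $h_{T'}(\mu_T)\le h(T')$ --- yields $\inf_g\frac{||g||_T}{||g||_{T'}}\le\frac{h(T')}{h(T)}$, the lower bound. This proves (1).

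Part (2) is the same computation with the strict inequality of Theorem~\ref{C}(2) in place of the weak one of Theorem~\ref{C}(1). If $[T]\ne[T']$ then also $[T']\ne[T]$, so Theorem~\ref{C}(2) gives $h_T(\mu_{T'})<h(T)$; feeding this into the first computation above sharpens the upper bound to $\frac{h(T')}{h(T)}<\sup_g\frac{||g||_T}{||g||_{T'}}$. Symmetrically, $h_{T'}(\mu_T)<h(T')$ sharpens the lower bound to $\inf_g\frac{||g||_T}{||g||_{T'}}<\frac{h(T')}{h(T)}$, completing (2). The only point requiring care here --- and the nearest thing to an obstacle, though it is purely organizational --- is keeping track of which ordered pair $(T,T')$ is fed into each invocation of Theorems~\ref{A} and~\ref{C}, and applying the $\inf$-to-$\sup$ reciprocal identity on the correct side of each inequality; there is no analytic content beyond what the cited results already supply.
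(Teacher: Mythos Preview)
Your proof is correct and follows essentially the same approach as the paper's own argument: combine the formula $h_{T'}(\mu_T)=h(T)\inf_g\frac{||g||_T}{||g||_{T'}}$ from Theorem~\ref{A}(2) with the bound $h_{T'}(\mu_T)\le h(T')$ (respectively the strict bound from Theorem~\ref{C}(2)), apply this once in each ordering of $(T,T')$, and use the reciprocal identity between the $\inf$ and the $\sup$. The only cosmetic difference is that the paper derives the lower bound first and then obtains the upper bound by swapping $T$ and $T'$, whereas you do it in the opposite order.
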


Here $[T]$ denotes the projective class of a tree $T\in cv(F)$.
Part (2) of Corollary~\ref{D} implies that if $[T]\ne [T']$ and $h(T)=h(T')$ then
\[
\inf_{g\in F\setminus\{1\}} \frac{||g||_T}{||g||_{T'}}< 1<\sup_{g\in F\setminus\{1\}} \frac{||g||_T}{||g||_{T'}}.
\]
Thus there exist $g,f\in F\setminus\{1\}$ such that $||g||_T<||g||_{T'}$ and $||f||_T>||f||_{T'}$.
This statement provides an analogue of a theorem of Tad White~\cite{Wh} who proved a similar result for $CV(F)$, that is for the
situation where points in $cv(F)$ are normalized by co-volume.
The above inequality is an analogue of White's result for the situation where we normalize points of $cv(F)$ by volume entropy.

In Theorem~\ref{thm:support} we also bound the geometric entropy $h_T(\mu)$ by the exponential growth rate of the
support of $\mu$ (appropriately defined) and observe that currents with support of subexponential growth always have geometric
entropy equal to zero.

 Suppose that $T\in cv(F)$ is a simplicial tree with all edges of length one, so that we can think of $T$ as
 $\widetilde\Gamma$ for the finite graph $\Gamma=T/F$ with the standard simplicial metric, and without degree-one vertices.
 There is a natural shift map $\sigma:\Omega(\Gamma)\to\Omega(\Gamma)$ on the space $\Omega(\Gamma)$ of all semi-infinite reduced edge-paths in $\Gamma$, corresponding to erasing the first edge of a path.
 The pair $(\Omega(\Gamma),\sigma)$ is an irreducible subshift of finite type. For every finite reduced edge-path $v$ in $\Gamma$
 there is a natural cylinder set
 $Cyl_v\subseteq \Omega(\Gamma)$ consisting of all semi-infinite paths $\gamma\in \Omega(\Gamma)$ that have $v$ as an
 initial segment. We can think of $\Omega(\Gamma)$ as an analogue of the unit tangent bundle for $\Gamma$. There is also a natural
 affine correspondence (see \cite{Ka1} for more details) between the space of geodesic currents $Curr(F)$ and the space
 $\mathcal M(\Gamma)$ of finite shift-invariant measures on the space $\Omega(\Gamma)$. For $\mu\in Curr(F)$ the corresponding shift-invariant measure $\widehat\mu\in \mathcal M(\Gamma)$ is defined by
 the condition
 $\widehat\mu(Cyl_v):=
 \mu(Cyl_{[x,y]})$ where $v$ is an arbitrary finite reduced edge path in $\Gamma$ and where $[x,y]$ is a lift of $v$ to $T$.
 In this setting, for any $\mu\in Curr(F)$, we relate the geometric entropy $h_T(\mu)$ and the measure-theoretic entropy of
 $\widehat\mu$ (normalized to be a probability measure). We refer the reader to \cite{Kitchens} for a more detailed discussion
 regarding measure-theoretic entropy (also known as metric entropy or Kolmogorov-Sinai entropy) of shift-invariant measures on
 irreducible subshifts of finite type. In particular, it is known that for such subshifts the measure-theoretic entropy of a
 shift-invariant probability measure never exceeds the topological entropy of the shift and that the equality is
 realized by a unique shift-invariant probability measure called the \emph{measure of maximal entropy}. For a shift-invariant
 probability measure $\nu$ on $\Omega(\Gamma)$ we denote its measure-theoretic entropy by $\hbar(\nu)$.

 \begin{theor}\label{E} (Theorem~\ref{thm:ks} and Corollary~\ref{cor:ks}.)
 Let $T\in cv(F)$ be a simplicial tree with simplicial metric and let $\Gamma=T/F$ be the quotient graph (thus all edges in
 $T$ and $\Gamma$ have length one, and $\Gamma$ is a finite connected graph without degree-one vertices). Let $\mu\in Curr(F)$,
 $\mu\ne 0$, and let $\widehat\mu\in \mathcal M(\Gamma)$ be the corresponding shift-invariant measure on
 $\Omega(\Gamma)$, normalized to be a probability measure. Similarly, let $\mu_T\in Curr(F)$ be a Patterson-Sullivan current
 for $T$ and let $\widehat\mu_T\in \mathcal M(\Gamma)$ be the corresponding shift-invariant measure on $\Omega(\Gamma)$,
 normalized to have total mass one. Then we have

 \begin{enumerate}
 \item \ \
 $h_T(\mu)\le \hbar(\widehat\mu)\le h_{topol}(\Omega(\Gamma),\sigma)=h(T)=\hbar(\widehat\mu_T) ;$

 \item $h_T(\mu)=h(T)$ if and only if there is $c>0$ such that $c\mu=\mu_T$, that is, $[\mu]=[\mu_T]$ in $\mathbb PCurr(F)$.
 \end{enumerate}
 \end{theor}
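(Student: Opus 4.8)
The plan is to transport the geometry of $T$ into the combinatorics of the subshift $(\Omega(\Gamma),\sigma)$ and then use two standard facts about irreducible subshifts of finite type: the variational principle and the uniqueness of the measure of maximal entropy. Throughout I may assume $\widehat\mu$ and $\widehat\mu_T$ are probability measures, since rescaling a current changes neither $h_T$ nor $\hbar$ of the normalized measure. The first step is to rewrite the defining $\liminf$ for $h_T(\mu)$ in terms of cylinders in $\Omega(\Gamma)$. Since the metric on $T$ is simplicial, for vertices $x,y$ the distance $d_T(x,y)$ is the combinatorial length of the reduced edge-path $v$ which is the image of $[x,y]$ in $\Gamma$, and $\mu(Cyl_{[x,y]})=\widehat\mu(Cyl_v)$; for an arbitrary segment $[x,y]$ one checks that $Cyl_{[x,y]}=Cyl_{[x',y']}$, where $[x',y']$ is the shortest vertex-to-vertex segment containing $[x,y]$, with $d_T(x,y)<d_T(x',y')<d_T(x,y)+2$, so passing to vertex segments does not change the $\liminf$. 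Finally, since $\mu\ne 0$ the support of $\widehat\mu$ in $\Omega(\Gamma)$ is non-empty and every finite prefix of a point of the support has positive $\widehat\mu$-mass, so the set $P_n$ of reduced edge-paths $v$ of length $n$ with $\widehat\mu(Cyl_v)>0$ is non-empty for all $n$, and one gets
\[
h_T(\mu)=\liminf_{n\to\infty}\ \frac1n\,\min_{v\in P_n}\bigl(-\log\widehat\mu(Cyl_v)\bigr).
\]
Some of this bookkeeping will already be available from the basic properties of $h_T$ established in Section~\ref{sect:ge}.

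The key inequality $h_T(\mu)\le\hbar(\widehat\mu)$ then comes from the trivial observation that a minimum is at most a weighted average. For each $n$ the cylinders $Cyl_v$ with $|v|=n$ are exactly the cells of $\bigvee_{i=0}^{n-1}\sigma^{-i}\mathcal P$, where $\mathcal P$ is the partition of $\Omega(\Gamma)$ by the first edge, which is a generator for $\sigma$; its entropy is $H_n(\widehat\mu)=\sum_{v\in P_n}\widehat\mu(Cyl_v)\bigl(-\log\widehat\mu(Cyl_v)\bigr)$, a convex combination of the numbers $-\log\widehat\mu(Cyl_v)$. Hence $\min_{v\in P_n}\bigl(-\log\widehat\mu(Cyl_v)\bigr)\le H_n(\widehat\mu)$; dividing by $n$, taking $\liminf$, and using $\hbar(\widehat\mu)=\lim_n\frac1n H_n(\widehat\mu)$ (Kolmogorov--Sinai; see \cite{Kitchens}) gives $h_T(\mu)\le\hbar(\widehat\mu)$. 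The next link, $\hbar(\widehat\mu)\le h_{topol}(\Omega(\Gamma),\sigma)$, is the easy half of the variational principle for the irreducible subshift of finite type $(\Omega(\Gamma),\sigma)$.

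To close the chain I would identify its two ends. The equality $h_{topol}(\Omega(\Gamma),\sigma)=h(T)$ is standard: both sides equal $\log\lambda$, where $\lambda$ is the Perron--Frobenius eigenvalue of the non-backtracking edge-transition matrix of $\Gamma$, since the number of reduced edge-paths of length $n$ in $\Gamma$ grows like $\lambda^n$, while $\#\{g\in F: d_T(x_0,gx_0)=n\}$ is the number of reduced edge-loops of length $n$ in $\Gamma$ at $\bar x_0$, which by irreducibility grows at the same rate (if this is proved earlier in the paper it is simply cited there). Now feed $\mu=\mu_T$ into the two inequalities already obtained and use $h_T(\mu_T)=h(T)$ from Theorem~\ref{A}(1):
\[
h(T)=h_T(\mu_T)\le\hbar(\widehat\mu_T)\le h_{topol}(\Omega(\Gamma),\sigma)=h(T),
\]
so $\hbar(\widehat\mu_T)=h(T)=h_{topol}(\Omega(\Gamma),\sigma)$; in particular $\widehat\mu_T$ is the unique measure of maximal entropy of $(\Omega(\Gamma),\sigma)$. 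This establishes part (1).

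Part (2) then follows formally. If $c\mu=\mu_T$ for some $c>0$, homogeneity of $h_T$ and Theorem~\ref{A}(1) give $h_T(\mu)=h_T(\mu_T)=h(T)$. Conversely, if $h_T(\mu)=h(T)$ the chain from part (1) collapses to equalities, so $\hbar(\widehat\mu)=h_{topol}(\Omega(\Gamma),\sigma)$; uniqueness of the measure of maximal entropy forces $\widehat\mu=\widehat\mu_T$, and the affine bijection between $Curr(F)$ and the space $\mathcal M(\Gamma)$ of finite shift-invariant measures on $\Omega(\Gamma)$ (applied to the un-normalized measures) yields $\mu=c\,\mu_T$ for some $c>0$, i.e. $[\mu]=[\mu_T]$ in $\mathbb PCurr(F)$. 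The only parts that require genuine work are the combinatorial reduction in the first paragraph and the identification $h_{topol}(\Omega(\Gamma),\sigma)=h(T)$; everything else is the minimum-versus-average bound together with the off-the-shelf facts about irreducible subshifts of finite type, so the main obstacle here is bookkeeping rather than a deep difficulty.
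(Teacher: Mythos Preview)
Your proposal is correct and follows essentially the same route as the paper: both arguments use the generating partition of $\Omega(\Gamma)$ by the first edge to write $\hbar(\widehat\mu)=\lim_n\frac{1}{n}H_n(\widehat\mu)$, derive $h_T(\mu)\le\hbar(\widehat\mu)$ from the observation that the minimum of the $-\log\widehat\mu(Cyl_v)$ over $|v|=n$ is bounded above by their $\widehat\mu$-weighted average (the paper phrases this as ``$-\log\widehat\mu(Cyl_v)\ge sn$ for all $v$ of length $n\ge n_0$ forces $H_n\ge sn$'', which is the same thing), cite the variational inequality and $h_{topol}(\sigma)=h(T)$ as standard, and then squeeze with $h_T(\mu_T)=h(T)$ to identify $\widehat\mu_T$ as the measure of maximal entropy and deduce part~(2) from its uniqueness. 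Your ``min $\le$ weighted average'' formulation is slightly cleaner, but there is no substantive difference.
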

(In particular $\widehat\mu_T$ is the measure of maximal entropy for $(\Omega(\Gamma),\sigma)$).
Note that Theorem~\ref{E} implies part (2) of Corollary~\ref{C} for the case where $T\in cv(F)$ has simplicial metric
(all edges have length one).

We believe that a version of Theorem~\ref{E} should hold for an arbitrary $T\in cv(F)$ and not just for a tree with simplicial
metric, that is, for the case where $\Gamma=T/F$ is an arbitrary finite metric graph without degree-one vertices. Indeed, in this
general case there is still a natural correspondence between $Curr(F)$ and the space of all $\mathbb R_{\ge 0}$-invariant measures
on the space $\Omega_\mathbb R(\Gamma)$ of all locally isometric embeddings $\gamma:[0,\infty)\to \Gamma$, where $\gamma(0)$ is not
required to be a vertex.
(The space $\Omega_\mathbb R(\Gamma)$ comes equipped with a natural $\mathbb R_{\ge 0}$-action). However, in the case of a
non-simplicial metric on $\Gamma$ this $\mathbb R_{\ge 0}$ action does not nicely match the "combinatorial" shift $\sigma$
from Theorem~\ref{E}, which creates unpleasant technical problems with the argument. Proving an analogue of Theorem~\ref{E}
for an arbitrary $T\in cv(F)$ requires first developing basic background machinery and formalism for analyzing
$\mathbb R_{\ge 0}$-invariant measures on $\Omega_\mathbb R(\Gamma)$ and matching this information with the combinatorial
description of geodesic currents used in this paper. We postpone such analysis till a later date.

Let us note however, in view of the above discussion, that one can directly define an analogue of the notion of {\it geometric
entropy for any shift-invariant} (or even not necessarily invariant) {\it measure on an irreducible subshift of finite type}, and study this
notion in its own right. Thus let $A$ be a finite alphabet, let $A^\omega$ be the full shift (the set of all semi-infinite
words in $A$), let $\sigma:A^\omega\to A^\omega$ be the standard shift map and let $\Omega\subseteq A^\omega$ be a subshift of
finite type. For any finite word $v$ that occurs as an initial segment of some element of $\Omega$ we define $Cyl_v$ to
be the set of all $\gamma\in \Omega$ that begin with $v$. If $\mu$ is a finite positive Borel measure on $\Omega$, we can define
its \emph{geometric entropy} as:
\[
h_{geom}(\mu)=\liminf_{|v|\to\infty} \frac{-\log \mu (Cyl_v)
}{|v|},
\]
where $|v|$ is the ordinary combinatorial length of the word $v$.
Similar to the results above, one can show that $h_{geom}(\mu)\le h_{topol}(\Omega)$ and that if $\mu$ is a shift invariant probability measure then $h_{geom}(\mu)\le \hbar(\mu)\le h_{topol}(\Omega)$. Also, similarly to Theorem~\ref{C}, one can show that $h_{geom}(\mu)\le {\bf HD}_d(\mu)$ with respect to restriction $d$ to $\Omega$ of the standard metric from $A^\omega$. In this paper we concentrate on the setting of geodesic currents since it is more invariant and allows us to avoid choosing $\Omega$, (which would correspond to fixing a particular simplicial tree $T\in cv(F)$).

The authors thank Chris Connell and Seonhee Lim for useful
conversations. We also thank the referee for helpful comments.

\section{The Culler-Vogtmann Outer Space}\label{section:cv}

The Culler-Vogtmann outer space, introduced by Culler and Vogtmann in
a seminal paper~\cite{CV}, is a free group analogue of the
Teichm\"uller space of a closed surface of negative Euler
characteristic. We refer the reader to the original paper \cite{CV}
and to a survey paper \cite{Vog} for a detailed discussion of the
basic facts listed in this section and for the further references.

\begin{defn}[Non-projectivized Outer Space]
Let $F$ be a finitely generated free group of rank $k\ge 2$.

The \emph{non-projectivized outer space} $cv(F)$ consists of all
minimal free and discrete isometric actions of $F$ on $\mathbb
R$-trees. Two such trees are considered equal in $cv(F)$ if
there exists an $F$-equivariant isometry between them.
The space $cv(F)$ is endowed with the equivariant Gromov-Hausdorff
convergence topology.
\end{defn}

It turns out that every $T\in cv(F)$ is uniquely determined by its
\emph{translation length function} $\ell_T: F\to \mathbb R$, where for
every
$g\in F$
\[
\ell_T(g)=||g||_T=\min_{x\in T} d_T(x,gx)
\]
is the \emph{translation length} of $g$. Note that
$\ell_T(g)=\ell_T(hgh^{-1})$ for every $g,h\in F$. Thus $\ell_T$ can
be thought of as a function on the set of conjugacy classes in $F$.
The space $cv(F)$ comes equipped with a natural left $Out(F)$-action
by homeomorphisms. At the length function level, if $\phi\in Out(F)$,
$T\in cv(F)$ and $g\in F$ we have
\[
\ell_{\phi T}([g])=\ell_T(\phi^{-1} [g]).
\]
It is known that the equivariant Gromov-Hausdorff topology on $cv(F)$
coincides with the pointwise convergence topology at the level of
length functions. Thus for $T_n,T\in cv(F)$ we have $\lim_{n\to\infty}
T_n=T$ if and only if for every $g\in F$ we have $\lim_{n\to\infty} \ell_{T_n}(g)=\ell_T(g)$.

Points of $cv(F)$ have a more explicit combinatorial description as
``marked metric graph structures'' on $F$:

\begin{defn}[Metric graph structure]
Let $\Gamma$ be a finite connected graph without degree-one and
degree-two vertices. A \emph{metric graph structure $\mathcal L$} on $\Gamma$
is a function $\mathcal L: E\Gamma\to \mathbb R$ such that for every
$e\in E\Gamma$ we have
\[
\mathcal L(e)=\mathcal L(e^{-1})>0.
\]

More generally, define  a \emph{semi-metric graph structure $\mathcal L$} on $\Gamma$
to be a function $\mathcal L: E\Gamma\to \mathbb R$ such that for every
$e\in E\Gamma$ we have
\[
\mathcal L(e)=\mathcal L(e^{-1})\ge 0.
\]
A semi-metric graph structure $\mathcal L$ on $\Gamma$ is
\emph{nondegenerate} if there exists a subforest $Z$ in $\Gamma$ such
that $\mathcal L(e)>0$ for every $e\in E\Gamma-EZ$.

For a semi-metric graph structure $\mathcal L$ on $\Gamma$ define the
\emph{volume} of $\mathcal L$ as
\[
vol(\mathcal L)=\sum_{e\in E^+\Gamma} \mathcal L(e)
\]
where $E\Gamma=E^+\Gamma\sqcup E^-\Gamma$ is any orientation on $\Gamma$.

If $\mathcal L$ is a semi-metric graph structure on $\Gamma$ and $v=e_1,\dots, e_n$ is an edge-path in $\Gamma$, we denote
\[
\mathcal L(v)=\sum_{i=1}^n \mathcal L(e_i)
\]
and call $\mathcal L(v)$ the $\mathcal L$-\emph{length} of $v$.
\end{defn}

\begin{conv}
For a (finite or infinite) graph $\Delta$, denote by $V\Delta$ the set
of all vertices of $\Delta$, and denote by $E\Delta$ the set of all
oriented edges of $\Delta$. Combinatorially, we use Serre's convention
regarding graphs. Namely, $\Delta$ comes equipped with three functions: $o:E\Delta\to V\Delta$; $t:E\Delta\to V\Delta$; and
${}^{-1}:E\Delta\to E\Delta$, such that $e^{-1}\ne e$,
$(e^{-1})^{-1}=e$ for every $e\in E\Delta$,
$o(e)=t(e^{-1})$, and $t(e)=o(e^{-1})$ for every $e\in E\Delta$. The
edge $e^{-1}$ is called the \emph{inverse} of $e$. An \emph{orientation} on $\Delta$ is a partition $E\Delta=E^+\Delta\sqcup E^-\Delta$, where for every $e\in E\Delta$ one
of the edges $e, e^{-1}$ belongs to $E^+\Delta$ and the other edge belongs to  $E^-\Delta$.

An \emph{edge-path} $\gamma$
in $\Delta$ is a sequence of oriented edges which connects a vertex
$o(\gamma)$ (origin) with a vertex $t(\gamma)$ (terminus). A path is
called \emph{reduced} if it does not contain a back-tracking, that is  a path
of the form $ee^{-1}$, where $e\in E\Delta$.

If $\gamma=e_1,\dots, e_n$ is an edge-path in $\Delta$, where $e_i\in E\Delta$, we call $n$ the \emph{simplicial length} of $\gamma$ and denote it
by $|\gamma|$.

We denote by $\mathcal P(\Delta)$ the set of all finite reduced edge-paths
in $\Delta$.
For a vertex $x\in V\Delta$, we denote by $\mathcal P_x(\Delta)$ the
collection of all $\gamma\in \mathcal P(\Delta)$ that begin with $x$.

\end{conv}

\begin{defn}[Marking or Simplicial chart]
  Let $\Gamma$ be a finite connected graph without degree-one and
  degree-two vertices
  such that $\pi_1(\Gamma)\cong F$. Let $\alpha:F\to \pi_1(\Gamma,p)$
  be an isomorphism, where $p$ is a vertex of $\Gamma$. We call such
  $\alpha$ a \emph{simplicial chart} or a \emph{marking} for $F$.
\end{defn}

\begin{defn}[Marked metric graph structure]\label{defn:mmgs}
  Let $F$ be a free group of finite rank $k\ge 2$.  A \emph{marked
    (semi-)metric graph structure} on $F$ is a pair $(\alpha, \mathcal L$),
  where $\alpha: F\to \pi_1(\Gamma,p)$ is a simplicial chart for $F$
  and $\mathcal L$ is a (semi-)metric structure on $\Gamma$.
(A marked semi-metric graph structure $(\alpha, \mathcal L)$ is
  \emph{non-degenerate} if $\mathcal L$ is nondegenerate.)
\end{defn}

\begin{conv}\label{conv:mgs}
Let $(\alpha, \mathcal L)$ be a
marked metric graph structure on $F$. Then $(\alpha, \mathcal L)$
defines a point $T\in cv(F)$ as follows. Topologically, let $T=\widetilde
\Gamma$, with an action of $F$ on $T$ via $\alpha$. We lift the metric
structure $\mathcal L$ from $\Gamma$ to $T$ by giving every edge in
$T$ the same length as that of its projection in $\Gamma$. This makes
$T$ into an $\mathbb R$-tree equipped with a minimal free and discrete
isometric action of $F$. Thus $T\in cv(F)$ and in this situation we
will sometimes use the  notation $T=(\alpha, \mathcal L)\in cv(F)$.
Note that $T/F=\Gamma$. Moreover, it is not hard to see that every
point of $cv(F)$ arises in this fashion and that $CV(F)$ is exactly
the set of all those $T=(\alpha, \mathcal L)\in cv(F)$ where $(\alpha,
\mathcal L)$ is a marked metric graph structure on $F$ with
$vol(\mathcal L)=1$.

Note also that any nondegenerate marked semi-metric graph structure on $F$ also defines a point in $cv(F)$ by \
first contracting the edges of $\mathcal L$-length $0$, and then proceeding as above. 
\end{conv}

\begin{defn}[Elementary charts]\label{defn:ec}
  Let $\alpha: F\to \pi_1(\Gamma,p)$ be a simplicial chart for $F$.
  Fix an orientation $E\Gamma=E^+\Gamma\cup E^-\Gamma$ on $\Gamma$ and
  let $E^+\Gamma=\{e_1,\dots, e_m\}$, where $m=\#E^+\Gamma$.

  Let $V_\alpha\subseteq cv(F)$ be the set of all $T=(\alpha, \mathcal L)$ where
  $\mathcal L$ is a nondegenerate semi-metric structure on $\Gamma$. Let
  $U_\alpha$ be the set of all $T=(\alpha, \mathcal L)$ where
  $\mathcal L$ is a metric structure on $\Gamma$. Thus
  $U_\alpha\subseteq V_\alpha$. We call $V_\alpha$ the
  \emph{elementary chart} corresponding to $\alpha$ and we call
  $U_\alpha$ the \emph{elementary open chart} corresponding to
  $\alpha$.

  There is a natural map $\lambda_\alpha: V_\alpha\to \mathbb R^m$
  defined as $\lambda_\alpha (\alpha,\mathcal L)=(\mathcal
  L(e_1),\dots, \mathcal L(e_m))$. It is known that
$\lambda_\alpha: V_\alpha\to \mathbb R^m$ is injective and is a homeomorphism onto its
image. In particular, $\lambda_\alpha(U_\alpha)$ is the positive open
cone in $\mathbb R^m$, that is,  $\lambda_\alpha(U_\alpha)$ consists of all points in $\mathbb R^m$ all of
whose coordinates are positive. Therefore $U_\alpha$ is homeomorphic to an open cone in $\mathbb R^m$.
\end{defn}

The space $cv(F)$ is the union of open cones $U_\alpha$ taken
over all simplicial charts $\alpha$ on $F$. Moreover, every point
$T\in cv(F)$ belongs to only finitely many of the elementary charts
$V_\alpha$. It is also known that the standard topology on $cv(F)$
coincides with the weakest topology for which all the maps
$\lambda_\alpha^{-1}:  \lambda_\alpha(V_\alpha)\to V_\alpha$ are continuous.

Let $\alpha:F\to \pi_1(\Gamma,p)$ be a simplicial chart, let
$T=\widetilde \Gamma$ and let $j:T\to \Gamma$ denote the covering. It is
easy to see that for $\mu_n, \mu\in Curr(F)$ we have
$\displaystyle\lim_{n\to\infty}\mu_n=\mu$ if and only if
$\displaystyle\lim_{n\to\infty}\mu_n(Cyl_{\gamma})=\mu(Cyl_{\gamma})$
for every $\gamma \in \mathcal P(T)$.  Moreover, for $\mu,\mu'\in
Curr(F)$ we have $\mu=\mu'$ if and only if
$\mu(Cyl_{\gamma})=\mu'(Cyl_{\gamma})$ for every $\gamma\in \mathcal
P(T)$.

Note that for any $f\in F$ and $\gamma\in \mathcal P(T)$ we have $f
  Cyl_{\gamma}=Cyl_{f \gamma}$.  Since geodesic currents
  are, by definition, $F$-invariant, for a geodesic current $\mu$ and
  for $\gamma\in \mathcal P(T)$ the value $\mu(Cyl_{\gamma})$
  only depends on the label $j(\gamma)\in \mathcal P(\Gamma)$ of
  $\gamma$.

\begin{notation}\label{not:sp}
  For this reason, for any reduced edge-path $v\in \mathcal P(\Gamma)$,
  we denote by $\langle v,\mu\rangle_\alpha$ the value
  $\mu(Cyl_{\gamma})$ where $\gamma\in \mathcal P(T)$ is any reduced
  edge-path with label $v$.
\end{notation}

We finish this Section with the following basic lemma, needed later, which shows that for $T,T'\in cv(F)$ extremal distortions of the translation length functions for $T$ and $T'$ give the optimal stretching constants for  $F$-equivariant quasi-isometries between $T$ and $T'$.

\begin{lem}\label{lem:stretch}
Let $T,T'\in cv(F)$ and let $\phi:T\to T'$ be an $F$-equivariant
quasi-isometry.
Then the following hold:
\begin{enumerate}
\item \[
\lambda_1:=\inf_{g\in F\setminus\{1\}}
\frac{||g||_{T'}}{||g||_T}=\liminf_{d_T(x,y)\to\infty} \frac{d_{T'}(\phi(x),\phi(y))}{d_T(x,y)}.
\]
\item \[
\lambda_2:=\sup_{g\in F\setminus\{1\}}
\frac{||g||_{T'}}{||g||_T}=\limsup_{d_T(x,y)\to\infty} \frac{d_{T'}(\phi(x),\phi(y))}{d_T(x,y)}.
\]
\item There is $C>0$ such that for any $x,y\in T$ we have
\[
\lambda_1 d_T(x,y)-C\le d_{T'}(\phi(x),\phi(y))\le \lambda_2 d_T(x,y)+C .
\]
\end{enumerate}
\end{lem}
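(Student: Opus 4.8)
The plan is to deduce all three assertions from the existence of \emph{optimal} $F$-equivariant Lipschitz maps between $T$ and $T'$ in each direction, together with one soft fact about equivariant maps. Write $\lambda_2':=\sup_{g\ne 1}\|g\|_T/\|g\|_{T'}$, so that $\lambda_1=1/\lambda_2'$, and note that $0<\lambda_1\le\lambda_2<\infty$ (both bounds being immediate from the quasi-isometry inequalities for $\phi$, as in the axis computation below). By the theorem of Francaviglia and Martino \cite{FM} (inspired by Thurston's metric on Teichm\"uller space) the minimal Lipschitz constant among all $F$-equivariant maps $T\to T'$ is attained and equals $\sup_{g\ne 1}\|g\|_{T'}/\|g\|_T=\lambda_2$; fix such a $\lambda_2$-Lipschitz $F$-equivariant map $\psi\colon T\to T'$, and symmetrically a $\lambda_2'$-Lipschitz $F$-equivariant map $\psi'\colon T'\to T$. (It is also known \cite{Wh,Ka,Ka1} that these suprema are in fact maxima, but this is not needed for the argument.)

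The soft fact is that any two $F$-equivariant coarsely Lipschitz maps $h_1,h_2\colon S\to S'$ between trees in $cv(F)$ are at finite sup-distance: for a basepoint $v\in S$, equivariance together with the fact that $F$ acts by isometries gives $d_{S'}(h_1(gv),h_2(gv))=d_{S'}(h_1(v),h_2(v))$ independently of $g\in F$, and since $F$ acts cocompactly on $S$ every point of $S$ lies within a uniform distance of the orbit $Fv$; coarse Lipschitzness of $h_1,h_2$ then bounds $d_{S'}(h_1(x),h_2(x))$ uniformly in $x$. I would apply this twice. Applied to $\phi$ and $\psi$ (both quasi-isometries, hence coarsely Lipschitz) it produces a constant $D$ with $d_{T'}(\phi(x),\psi(x))\le D$ for all $x$, whence $d_{T'}(\phi(x),\phi(y))\le d_{T'}(\psi(x),\psi(y))+2D\le \lambda_2\,d_T(x,y)+2D$. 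Applied to $\psi'\circ\phi$ and $\mathrm{id}_T$ (again $F$-equivariant and coarsely Lipschitz) it produces a constant $D'$ with $d_T(x,\psi'\phi(x))\le D'$ for all $x$, whence $d_T(x,y)\le d_T(\psi'\phi(x),\psi'\phi(y))+2D'\le \lambda_2'\,d_{T'}(\phi(x),\phi(y))+2D'$, i.e.\ $d_{T'}(\phi(x),\phi(y))\ge \lambda_1\,d_T(x,y)-2\lambda_1 D'$. Taking $C:=\max\{2D,\,2\lambda_1 D'\}$ gives assertion~(3).

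Assertions~(1) and~(2) then follow. Dividing the two inequalities of~(3) by $d_T(x,y)$ and letting $d_T(x,y)\to\infty$ gives $\liminf\ge\lambda_1$ and $\limsup\le\lambda_2$ in the respective limits. For the reverse inequalities I would use translation along axes: fix $g\in F\setminus\{1\}$ and a point $x_0$ on the axis of $g$ in $T$, so that $d_T(x_0,g^n x_0)=n\|g\|_T\to\infty$; since $T'$ is an $\mathbb R$-tree and $g$ acts on $T'$ as a hyperbolic isometry with some axis $A\subseteq T'$, one has the exact identity $d_{T'}(\phi(x_0),g^n\phi(x_0))=n\|g\|_{T'}+2\,d_{T'}(\phi(x_0),A)$, and its left-hand side equals $d_{T'}(\phi(x_0),\phi(g^n x_0))$ by $F$-equivariance of $\phi$. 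Hence along the sequence $y_n=g^n x_0$ the ratio $d_{T'}(\phi(x_0),\phi(y_n))/d_T(x_0,y_n)$ tends to $\|g\|_{T'}/\|g\|_T$; taking the infimum over $g\ne 1$ gives $\liminf\le\lambda_1$ and the supremum over $g\ne 1$ gives $\limsup\ge\lambda_2$, completing~(1) and~(2).

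The only non-elementary input is the realization, due to Francaviglia and Martino, of the extremal stretching ratios $\lambda_2$ and $\lambda_2'$ by honest $F$-equivariant Lipschitz maps, and this is where I expect the real content to lie; the remaining steps are formal. One could instead attempt to prove~(3) from scratch via a bounded-cancellation argument applied to an $F$-equivariant map that is affine on edges, but making the multiplicative constant sharp is essentially equivalent to the statement of \cite{FM}, so invoking it directly is cleaner.
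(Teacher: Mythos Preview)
Your proof is correct, but it takes a genuinely different route from the paper's. The paper's argument is entirely self-contained and does not invoke \cite{FM}. For part~(2), the paper first does the easy direction (the axis computation, essentially your last paragraph) to get $\lambda_2\le\limsup$; for the hard direction $\limsup\le\lambda_2$, it takes a sequence $(p_n,q_n)$ realizing the $\limsup$ and uses the elementary observation that any long geodesic segment in $T$ can be moved, up to a uniformly bounded error, onto a segment $[x_0,h_n x_0]$ lying on the axis of some $h_n\in F$ (by choosing a cyclically reduced closed path in $T/F$ close to the projection of $[p_n,q_n]$). Since $\phi$ carries the axis of $h_n$ in $T$ to within bounded Hausdorff distance of the axis of $h_n$ in $T'$, one gets $d_{T'}(\phi(p_n),\phi(q_n))$ within an additive constant of $\|h_n\|_{T'}$, and the ratio is then bounded by $\lambda_2$. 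Part~(3) falls out of the same argument by applying it to an arbitrary pair $x,y$ rather than to a $\limsup$-realizing sequence.

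What your approach buys is conceptual clarity: once one accepts the Francaviglia--Martino theorem and the bounded-distance lemma for equivariant coarse maps, part~(3) is immediate and parts~(1) and~(2) follow formally. What the paper's approach buys is that it is elementary and avoids importing the (non-trivial) existence of optimal Lipschitz maps; indeed, the ``cyclically reduced path'' trick essentially reproves, in this limited setting, the inequality underlying \cite{FM}. Your closing remark is apt: making the multiplicative constant sharp via a direct argument \emph{is} essentially the content of \cite{FM}, and the paper's proof is one concrete way to carry out that bounded-cancellation computation.
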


\begin{proof}
Let $x_0\in T$ be a base-point and let $x_0'=\phi(x_0)\in T'$.  Thus $\phi(gx_0)=gx_0'$
for all $g\in F$.

Let us show Part (2). For any $g\in F\setminus\{1\}$ we have
\[
\lim_{n\to\infty} \frac{d_T(x_0,g^nx_0)}{n}=||g||_T, \quad \lim_{n\to\infty} \frac{d_{T'}(x_0',g^nx_0')}{n}=||g||_{T'}
\]
and hence
\[
\lim_{n\to\infty} \frac{d_{T'}(x_0',g^nx_0')}{d_T(x_0,g^nx_0)}=\frac{||g||_{T'}}{||g||_T}.
\]
Then $\frac{||g||_{T'}}{||g||_T}\le \limsup_{d_T(x,y)\to\infty}
\frac{d_{T'}(\phi(x),\phi(y))}{d_T(x,y)}$ and so

\[
\sup_{g\in F\setminus\{1\}}
\frac{||g||_{T'}}{||g||_T}\le \limsup_{d_T(x,y)\to\infty} \frac{d_{T'}(\phi(x),\phi(y))}{d_T(x,y)}.
\]

Suppose now that $d_T(p_n,q_n)\to\infty$ and
\[
\lim_{n\to\infty} \frac{d_{T'}(\phi(p_n),\phi(q_n))}{d_T(p_n,q_n)}= \limsup_{d_T(x,y)\to\infty} \frac{d_{T'}(\phi(x),\phi(y))}{d_T(x,y)}.
\]

There is a constant $M=M(T')\ge 1$ such that
there are some $g_n,h_n\in F$  with $d_T(p_n,g_nx_0),
d_T(q_n,h_nx_0)\le M$ and such that the geodesic $[g_nx_0,h_nx_0]$
projects to a closed cyclically reduced path in $T/F$. That is, the
points $g_nx_0,h_nx_0$ belong to the axis of the element $h_ng_n^{-1}$
and $d_T(g_nx_0,h_nx_0)=||h_ng_n^{-1}||_T$. Translating $[p_n,q_n]$ by
$g_n^{-1}$ we may assume that $g_n=1$ for every $n\ge 1$. Thus
$d(x_0,h_nx_0)=||h_n||_T$ and $d_T(p_n,x_0), d_T(q_n,h_nx_0)\le M$.
Hence $| d_T(p_n,q_n)-||h_n||_T|\le 2M$. Note that $x_0$ and $h_nx_0$ belong to the axis of $h_n$ in $T$.

Denote $p_n'=\phi(p_n)$, $q_n'=\phi(q_n)$.
Since $\phi$ is an $F$-equivariant quasi-isometry, the $\phi$-image of the axis of $h_n$ in $T$ is an $h_n$-invariant quasigeodesic in $T'$ which is at a bounded Hausdorff distance from an $h_n$-invariant geodesic in $T'$, that is, from the axis of $h_n$ in $T'$. Hence
there is some
constant $C\ge 1$ such that $| d_{T'}(p_n',q_n')-||h_n||_{T'}|\le C$.

Therefore
\[
\lim_{n\to\infty} \frac{d_{T'}(\phi(p_n),\phi(q_n))}{d_T(p_n,q_n)}\le
\limsup_{n\to\infty} \frac{||h_n||_T+M}{||h_n||_{T'}-C}\le
\sup_{g\in F\setminus\{1\}}
\frac{||g||_{T'}}{||g||_T}.
\]
Hence
\[
\sup_{g\in F\setminus\{1\}}
\frac{||g||_{T'}}{||g||_T}=\limsup_{d_T(x,y)\to\infty} \frac{d_{T'}(\phi(x),\phi(y))}{d_T(x,y)},
\]
as required.

Part (1) is established using a similar argument to part (2) and we
omit the details.

For part (3), let $x,y\in T$ be arbitrary and let $x'=\phi(x)$, $y'=\phi(y)$.
As in the proof of (2), there exist elements $g,h\in F$ such that $d_T(x,gx_0),
d_T(y,hx_0)\le M$ and such that the geodesic $[gx_0,hx_0]$
projects to a closed cyclically reduced path in $T/F$. By exactly the same argument as above we deduce
\begin{gather*}
d_{T'}(x',y')\le ||h||_{T'}+C\le \lambda_2||h||_T+C\le \\
\lambda_2 (d_T(x,y)+2M)+C=\lambda_2 d_T(x,y)+(2M\lambda+C),
\end{gather*}
as required.
The proof that $d_{T'}(x',y')\ge\lambda_1 d_T(x,y)-(2M\lambda+C)$, is similar, and we omit the details.

\end{proof}

\section{Geometric entropy of a geodesic current}\label{sect:ge}

\begin{defn}[Geometric entropy of a current]\label{defn:ge}
Let $\mu\in Curr(F)$ and let $T\in cv(F)$.
Define the \emph{geometric entropy of $\mu$ with respect to $T$}  as
\[
h_T(\mu):=\liminf_{\overset{d_T(x,y)\to\infty}{x,y\in T}} \frac{-\log \mu (Cyl_{[x,y]})
}{d_T(x,y)}.
\]
\end{defn}

If $\mu (Cyl_{[x,y]})=0$, we interpret $\log 0=-\infty$. Thus for $\mu=0$ and any $T\in cv(F)$ we have $h_T(\mu)=\infty$. For any $\mu\in Curr(F)$, $\mu\ne 0$ and any $T\in cv(F)$ we have $0\le h_T(\mu)<\infty$.
Informally, $h_{T}(\mu)$ measures the slowest exponential decay rate
(with respect to $d_T(x,y)$)
of the "weights" $\mu (Cyl_{[x,y]})$ as $d_T(x,y)\to\infty$. Thus, taking into account that $T$ is a discrete $\mathbb R$-tree, we see that if $h_{T}(\mu)>s> 0$ then there exists $C>0$ such
that for every $x,y\in T$ with $d_T(x,y)\ge 1$ we have
\[
\mu (Cyl_{[x,y]}) \le C\ \exp(-s\, d_T(x,y)).
\]

The following is a more combinatorial interpretation of the geometric entropy that follows immediately from unraveling the definitions (we use
notation~\ref{not:sp}).

\begin{prop}
Let $\mu\in Curr(F)$, $\mu\ne 0$ and let $T\in cv(F)$ be determined by
the pair $(\alpha, \mathcal L)$, where $\alpha: F\to\pi_1(\Gamma,p)$ is an isomorphism
and $\mathcal L$ is a metric graph structure on $\Gamma$.
Then
\begin{enumerate}
\item
$$h_T(\mu):=\liminf_{\overset{|v|\to\infty}{v\in \mathcal P(\Gamma)}} \frac{-\log \langle v, \mu\rangle_\alpha}{\mathcal L(v)}=\liminf_{\overset{\mathcal L(v)\to\infty}{v\in \mathcal P(\Gamma)}} \frac{-\log \langle v, \mu\rangle_\alpha
}{\mathcal L(v)} ;$$

\item if $h_{T}(\mu)>s\ge 0$ then there exists $C>0$ such
that, for every $v\in \mathcal P(\Gamma)$
\[
\langle v,\mu\rangle_\alpha \le C\ \exp(-s \mathcal L(v)).
\]
\end{enumerate}
\end{prop}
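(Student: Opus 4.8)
The plan is to unwind the definitions and compare the three displayed $\liminf$'s. Write $(\mathrm{a})$ for $h_T(\mu)$ as in Definition~\ref{defn:ge}, $(\mathrm{b})$ for the middle quantity in (1), and $(\mathrm{c})$ for the right-hand one, and put $\epsilon:=\min_{e\in E\Gamma}\mathcal L(e)>0$ and $L:=\max_{e\in E\Gamma}\mathcal L(e)<\infty$ (both exist since $\Gamma$ is a finite graph). The equality $(\mathrm{b})=(\mathrm{c})$ is immediate: every $v\in\mathcal P(\Gamma)$ satisfies $\epsilon\,|v|\le\mathcal L(v)\le L\,|v|$, so along a sequence of paths the conditions ``$|v|\to\infty$'' and ``$\mathcal L(v)\to\infty$'' coincide, while the quantity being averaged is literally the same function of $v$.

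Next I would record the dictionary between $\mathcal P(\Gamma)$ and vertex-to-vertex geodesic segments of $T=\widetilde\Gamma$: a reduced $v\in\mathcal P(\Gamma)$ lifts to a geodesic segment $[x,y]\subseteq T$ with $x,y\in VT$, oriented consistently with $v$, for which $d_T(x,y)=\mathcal L(v)$ and, by Notation~\ref{not:sp}, $\langle v,\mu\rangle_\alpha=\mu(Cyl_{[x,y]})$; conversely every such segment projects to such a $v$. Hence $(\mathrm{b})$ is the $\liminf$ of the very same ratio $\frac{-\log\mu(Cyl_{[x,y]})}{d_T(x,y)}$ as $(\mathrm{a})$, but taken only over pairs $(x,y)\in VT\times VT$ rather than over all of $T\times T$. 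A $\liminf$ over a subfamily is never smaller, so $(\mathrm{a})\le(\mathrm{b})=(\mathrm{c})$.

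The content of (1) is the reverse inequality $(\mathrm{a})\ge(\mathrm{b})$, and, since $(\mathrm{a})$ is the infimum of such limits, it suffices to check that $\liminf_n\frac{-\log\mu(Cyl_{[x_n,y_n]})}{d_T(x_n,y_n)}\ge(\mathrm{b})$ for every sequence $(x_n,y_n)$ in $T$ with $d_T(x_n,y_n)\to\infty$. For $n$ large, $[x_n,y_n]$ contains at least two vertices; let $[x_n',y_n']\subseteq[x_n,y_n]$ be its maximal subsegment with endpoints in $VT$, and let $v_n\in\mathcal P(\Gamma)$ be its label. Then $d_T(x_n,y_n)-2L\le d_T(x_n',y_n')=\mathcal L(v_n)\le d_T(x_n,y_n)$, so $\mathcal L(v_n)\to\infty$ and $\mathcal L(v_n)/d_T(x_n,y_n)\to1$; since $[x_n',y_n']\subseteq[x_n,y_n]$ with the induced orientation we get $Cyl_{[x_n,y_n]}\subseteq Cyl_{[x_n',y_n']}$, hence $\mu(Cyl_{[x_n,y_n]})\le\langle v_n,\mu\rangle_\alpha$; and $\langle v_n,\mu\rangle_\alpha\le M_0:=\max_{e\in E\Gamma}\langle e,\mu\rangle_\alpha<\infty$, because $Cyl_{[x_n',y_n']}$ is contained in the cylinder of the first edge of $[x_n',y_n']$. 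Thus $-\log\langle v_n,\mu\rangle_\alpha\ge-\log M_0$ is bounded below, and
\[
\liminf_{n}\frac{-\log\mu(Cyl_{[x_n,y_n]})}{d_T(x_n,y_n)}\ \ge\ \liminf_{n}\frac{-\log\langle v_n,\mu\rangle_\alpha}{d_T(x_n,y_n)}\ =\ \liminf_{n}\frac{-\log\langle v_n,\mu\rangle_\alpha}{\mathcal L(v_n)}\ \ge\ (\mathrm{c})=(\mathrm{b}),
\]
where the middle equality is the elementary fact that replacing a denominator by an asymptotically equivalent one leaves the $\liminf$ unchanged once the numerators are bounded below, and the last inequality holds because $\{v_n\}\subseteq\mathcal P(\Gamma)$ with $\mathcal L(v_n)\to\infty$. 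This yields $(\mathrm{a})=(\mathrm{b})=(\mathrm{c})$, proving (1).

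For (2): since $h_T(\mu)>s\ge0$, part (1) gives $(\mathrm{c})>s$, so there is $R_0>0$ with $-\log\langle v,\mu\rangle_\alpha\ge s\,\mathcal L(v)$, i.e. $\langle v,\mu\rangle_\alpha\le e^{-s\mathcal L(v)}$, for all $v\in\mathcal P(\Gamma)$ with $\mathcal L(v)\ge R_0$; and for $\mathcal L(v)<R_0$ one has $\langle v,\mu\rangle_\alpha\le M_0\le M_0e^{sR_0}e^{-s\mathcal L(v)}$. Taking $C:=\max\{1,M_0e^{sR_0}\}$ proves the estimate; note that (2) in fact only uses the easy direction $(\mathrm{b})\ge(\mathrm{a})$ of (1). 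I expect the only real bookkeeping to lie in $(\mathrm{a})\ge(\mathrm{b})$: an arbitrary geodesic segment of $T$ need not end at vertices, so one passes to the maximal vertex subsegment, absorbs the additive error $2L$ into the $\liminf$, and uses the uniform lower bound on $-\log\langle v,\mu\rangle_\alpha$ to switch the denominator from $d_T(x,y)$ to $\mathcal L(v)$ harmlessly; everything else is formal.
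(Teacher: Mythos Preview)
Your proposal is correct and is precisely the ``unraveling the definitions'' that the paper invokes without writing out; the paper gives no argument beyond that phrase, so there is nothing to compare approach-wise. Your handling of the only nontrivial point---passing from arbitrary endpoints in $T$ to vertex endpoints via the maximal vertex subsegment and absorbing the bounded additive error $2L$ into the $\liminf$---is exactly the kind of bookkeeping the paper is suppressing.
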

We already noted that if $\mu=0$, we get $h_T(\mu)=\infty$ for any $T\in cv(F)$. On the
other hand, there is a family of so-called rational currents $\{\eta_g\}_ {g\in F\setminus\{1\}}$, such that for any $T\in cv(F)$, $h_T(\eta_g)=0$.

The current $\eta_g$ is defined as follows: for every Borel subset $S\subseteq\partial F^2$, $\eta_g(S)$ is equal to the number of $F$-translates of the bi-infinite geodesic $(g^{-\infty},g^{\infty})$ that belong to $S$.  

The following statement is an immediate corollary of the definition of
geometric entropy:
\begin{prop}
Let $\mu\in Curr(F), \mu\ne 0$ and let $T\in cv(F)$.
\begin{enumerate}
\item For any $c>0$ we have
$h_T(\mu)=h_T(c\mu)$.
\item For any $c>0$ we have $h_{cT}(\mu)= \frac{1}{c}h_T(\mu)$.
\end{enumerate}
\end{prop}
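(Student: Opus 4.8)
The plan is to verify both statements directly from Definition~\ref{defn:ge}, since each is an immediate consequence of elementary properties of $\liminf$ together with the scaling behaviour of cylinder sets and metrics.

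For part (1), fix $c>0$. For every nondegenerate segment $[x,y]$ in $T$ we have $(c\mu)(Cyl_{[x,y]}^T)=c\,\mu(Cyl_{[x,y]}^T)$, hence
\[
\frac{-\log (c\mu)(Cyl_{[x,y]}^T)}{d_T(x,y)}=\frac{-\log c}{d_T(x,y)}+\frac{-\log \mu(Cyl_{[x,y]}^T)}{d_T(x,y)}.
\]
As $d_T(x,y)\to\infty$ the first summand tends to $0$, so the two functions of $[x,y]$ appearing in the definition of $h_T(c\mu)$ and of $h_T(\mu)$ differ by a quantity going to $0$; taking $\liminf$ over $d_T(x,y)\to\infty$ then gives $h_T(c\mu)=h_T(\mu)$. (With the convention $\log 0=-\infty$ this also covers the degenerate case $\mu=0$, where both sides equal $\infty$.)

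For part (2), recall that $cT\in cv(F)$ has the same underlying $\mathbb R$-tree and the same $F$-action as $T$, with the metric rescaled by $d_{cT}(x,y)=c\,d_T(x,y)$ for all $x,y$. In particular $\partial(cT)=\partial T$, the identification $\partial^2 q_{cT}$ coincides with $\partial^2 q_T$, and for every segment $[x,y]$ the bi-infinite geodesics and their induced orientations are unchanged; consequently $Cyl_{[x,y]}^{cT}=Cyl_{[x,y]}^{T}$ as subsets of $\partial^2 F$, so $\mu$ assigns them equal mass. Since $c>0$ is fixed, $d_{cT}(x,y)\to\infty$ if and only if $d_T(x,y)\to\infty$. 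Therefore
\[
h_{cT}(\mu)=\liminf_{d_{cT}(x,y)\to\infty} \frac{-\log \mu(Cyl_{[x,y]}^{cT})}{d_{cT}(x,y)}=\liminf_{d_T(x,y)\to\infty} \frac{-\log \mu(Cyl_{[x,y]}^{T})}{c\,d_T(x,y)}=\frac1c\,h_T(\mu).
\]
There is no substantive obstacle here; the only (minor) points requiring attention are the observation that rescaling the metric on $T$ changes neither the boundary, nor the geodesics, nor the cylinder sets — so that $\mu$ records exactly the same weights $\mu(Cyl_{[x,y]})$ — and the routine fact that adding a sequence tending to $0$ to the sequence under consideration does not affect its $\liminf$.
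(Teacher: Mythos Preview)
Your proof is correct and is exactly the direct verification from Definition~\ref{defn:ge} that the paper has in mind; the paper states the proposition as ``an immediate corollary of the definition of geometric entropy'' and gives no proof.
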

Thus $h_T(\mu)$ depends only on the projective class $[\mu]\in \mathbb
PCurr(F)$ of $\mu$.

\begin{lem}\label{lem:qi}
Let $T,T'\in cv(F)$. Let $\phi: T\to T'$ be an $F$-equivariant quasi-isometry.
There exists an integer $M=M(\phi)\ge 1$ with the following  property.

Let $x_1,x_2\in T$ and let $x_1'=\phi(x_1), x_2'=\phi(x_2)$. Let
  $y_1,y_2\in [x_1',x_2']$ be such that
  $d_{T'}(x_1',y_1)=d_{T'}(x_2',y_2)=M$.
Then
  \[\phi(Cyl_{[x_1,x_2]})\subseteq Cyl_{[y_1,y_2]}.\]
\end{lem}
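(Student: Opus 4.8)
The plan is to use the fact that an $F$-equivariant quasi-isometry $\phi:T\to T'$ is a quasi-isometry between Gromov-hyperbolic geodesic spaces with the \emph{same} boundary (via $\partial q_T$ and $\partial q_{T'}$, which we have agreed to suppress), so $\phi$ sends bi-infinite geodesics uniformly close to bi-infinite geodesics, and then to quantify "uniformly close'' in order to fix the integer $M$. Fix quasi-isometry constants $(\lambda,c)$ for $\phi$, and let $\delta$ be a hyperbolicity constant for $T'$. By the Morse/stability lemma for quasi-geodesics, there is a constant $K=K(\lambda,c,\delta)$ such that for any $x_1,x_2\in T$, the $\phi$-image of the geodesic $[x_1,x_2]\subseteq T$ is a $(\lambda,c)$-quasi-geodesic in $T'$ lying within Hausdorff distance $K$ of the geodesic $[x_1',x_2']$ where $x_i'=\phi(x_i)$; the same applies to geodesic rays and bi-infinite geodesics, since $\phi$ is a quasi-isometry onto $T'$ up to bounded error. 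Set $M:=\lceil K\rceil + 10\delta + 10$ (the precise value is unimportant; any sufficiently large integer depending only on $\phi$ works).

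Now take $(\zeta_1,\zeta_2)\in Cyl_{[x_1,x_2]}$. By definition this means, identifying $\partial F$ with $\partial T$, that the bi-infinite geodesic $L=[\zeta_1,\zeta_2]\subseteq T$ contains $[x_1,x_2]$ with matching orientations. Apply $\phi$: the image $\phi(L)$ is a $(\lambda,c)$-quasi-geodesic line in $T'$ with the same endpoints $\zeta_1,\zeta_2$ (since $\partial\phi$ is the identity under our identifications), hence within Hausdorff distance $K$ of the geodesic $L'=[\zeta_1,\zeta_2]\subseteq T'$. In particular $\phi(x_1),\phi(x_2)=x_1',x_2'$ each lie within $K$ of $L'$. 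The points $y_1,y_2\in[x_1',x_2']$ are chosen at distance $M$ along that geodesic from $x_1',x_2'$ respectively, pointing inward. Since $M$ was chosen large compared to $K$ and $\delta$, a standard thin-triangle / fellow-travelling estimate shows that $y_1$ and $y_2$ both lie on $L'$ (up to distance $\delta$, which we absorb by enlarging $M$ and using that $L'$ is a genuine geodesic with the relevant endpoints), and that they occur on $L'$ in the correct cyclic order between $\zeta_1$ and $\zeta_2$, so that $[y_1,y_2]\subseteq L'=[\zeta_1,\zeta_2]$ with orientations agreeing. This is exactly the statement that $(\zeta_1,\zeta_2)\in Cyl_{[y_1,y_2]}$, i.e. $\phi(Cyl_{[x_1,x_2]})\subseteq Cyl_{[y_1,y_2]}$.

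The main obstacle is the bookkeeping needed to guarantee that $y_1$ and $y_2$ land \emph{on} the geodesic $L'$ (not merely near it) and in the \emph{correct order} — i.e. that passing from $x_1'$ a distance $M$ toward $x_2'$ along $[x_1',x_2']$ genuinely moves us onto the segment of $L'$ strictly between $\zeta_1$ and the other marked point, rather than overshooting past $x_2'$ or backtracking toward $\zeta_1$. This is handled by first ensuring $d_{T'}(x_1',x_2')$ is large (if it is bounded, the inclusion is vacuous or follows trivially since then $[x_1,x_2]$ has bounded $T$-length and we may simply take $M$ so large there is nothing to prove), and then using that $x_1'$ is $K$-close to $L'$ together with the fact that along a geodesic, once one travels distance $>2K$ in the direction of a far-away endpoint, one is provably on the correct side; the orientation claim follows because $\phi$ preserves the cyclic order of boundary points, which in turn follows from $\partial\phi = \mathrm{id}$ under the identifications. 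Everything else is routine application of Lemma~\ref{lem:stretch} and standard hyperbolic geometry, and the constant $M$ depends only on $\phi$ as claimed.
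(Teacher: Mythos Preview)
Your proposal is correct and is essentially the same approach as the paper's: the paper invokes the ``Bounded Cancellation Lemma'' for quasi-isometries between hyperbolic spaces, which is exactly the Morse/stability statement you use, and then, like you, leaves the routine verification to the reader. Since $T'$ is an $\mathbb R$-tree you may take $\delta=0$, which cleans up your ``up to $\delta$'' hedging (the projections of $x_1',x_2'$ to $L'$ are exact, so moving distance $M>K$ inward along $[x_1',x_2']$ lands you genuinely on $L'$), but the substance is the same.
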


\begin{proof}
This statement is a straightforward consequence of the "Bounded Cancellation Lemma"~\cite{Coo} for quasi-isometries between Gromov-hyperbolic spaces.
Indeed, let $[x,y]\subseteq T$ be a geodesic segment in $T$ and let $\gamma$ be a geodesic ray in $T$ with initial point $y$, such that the path $[x,y]\gamma$ is a geodesic (that is, there is no cancellation between $[x,y]$ and $\gamma$. There is a unique geodesic ray $\gamma'$   in $T'$ starting at $\phi(y)$ such that $\gamma'$ is at a finite Hausdorff distance from $\phi(\gamma)$. The "Bounded Cancellation Lemma"  implies that the cancellation between
$[\phi(x),\phi(y)]$ and $\gamma'$ is bounded by some constant $M\ge 1$ depending only on the quasi-isometry $\phi$.
It is not hard to check that this constant $M$ satisfies the
requirements of the proposition and we leave the details to the reader.
\end{proof}

\begin{prop}\label{prop:qi1}
Let $T,T'\in cv(F)$. Let $\phi: T\to T'$ be an $F$-equivariant quasi-isometry.
There exists an integer $M_1=M_1(\phi)\ge 1$ with the following  property.

Let $x_1,x_2\in T$ and let $x_1'=\phi(x_1), x_2'=\phi(x_2)$.

Then there exist points $p_1,p_2, q_1,q_2\in T'$ such that
$[p_1,p_2]=[x_1',x_2']\cap [q_1,q_2]$, such that
$d(q_i,x_i')\le M_1$ and such that
\[
Cyl_{[q_1,q_2]}\subseteq \phi(Cyl_{[x_1,x_2]}).
\]
\end{prop}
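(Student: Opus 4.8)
The plan is to produce a "reverse inclusion" complementing Lemma~\ref{lem:qi}. Lemma~\ref{lem:qi} tells us that the $\phi$-image of a cylinder $Cyl_{[x_1,x_2]}$ lands inside a cylinder over a slightly shortened segment of $[x_1',x_2']$; here we need a cylinder (over a segment slightly \emph{extended} past $x_1'$ and $x_2'$, in the appropriate directions) that is contained in $\phi(Cyl_{[x_1,x_2]})$. The natural source of such an extension is the fact that $\phi$ is a \emph{surjective-up-to-bounded-error} quasi-isometry: since $F$ acts cocompactly on both $T$ and $T'$, there is a constant $R_0$ such that every point of $T'$ is within $R_0$ of $\phi(T)$, and $\phi$ admits a coarse inverse $\psi:T'\to T$ which is again an $F$-equivariant quasi-isometry with $d_T(\psi\phi(z),z)$ and $d_{T'}(\phi\psi(w),w)$ uniformly bounded.

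The key steps, in order, are as follows. First, fix a coarse $F$-equivariant quasi-inverse $\psi:T'\to T$ of $\phi$, and let $M_0$ bound $d_T(\psi\phi(z),z)$ and $d_{T'}(\phi\psi(w),w)$ over all $z\in T$, $w\in T'$. Second, apply Lemma~\ref{lem:qi} to the quasi-isometry $\psi$: there is a constant $M=M(\psi)$ so that for any $w_1,w_2\in T'$, writing $w_i''=\psi(w_i)$ and picking $z_i\in[w_1'',w_2'']$ with $d_T(z_i,w_i'')=M$, one has $\psi(Cyl_{[w_1,w_2]})\subseteq Cyl_{[z_1,z_2]}$. Third, take $\xi\in Cyl_{[q_1,q_2]}$ for a suitable choice of $q_1,q_2$ near $x_1',x_2'$ on (an extension of) $[x_1',x_2']$: then $\psi(\xi)\in Cyl_{[z_1,z_2]}$ for points $z_1,z_2$ that, by the previous step together with the uniform bound $M_0$, lie within a controlled distance of $x_1=\psi\phi(x_1)$ and $x_2=\psi\phi(x_2)$; if we chose $q_1,q_2$ far enough out (distance $\gtrsim M+M_0$ beyond $x_1',x_2'$), the segment $[z_1,z_2]$ will contain $[x_1,x_2]$ with matching orientation, i.e. $\psi(\xi)\in Cyl_{[x_1,x_2]}$. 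Fourth, one must pass from $\psi(\xi)\in Cyl_{[x_1,x_2]}$ back to a point of $\phi(Cyl_{[x_1,x_2]})$ that equals $\xi$ (or lies in the same cylinder as $\xi$ on the boundary level): here one uses that on the boundary, $\partial\phi$ and $\partial\psi$ are mutually inverse homeomorphisms, so $\xi=\partial\phi(\partial\psi(\xi))$ with $\partial\psi(\xi)\in Cyl_{[x_1,x_2]}$ at the boundary level — hence $\xi\in\phi(Cyl_{[x_1,x_2]})$ after identifying currents/cylinders with their boundary descriptions via $\partial^2 q_T$, $\partial^2 q_{T'}$. Finally, choose $M_1$ to absorb all the constants ($M(\psi)$, $M_0$, and the additive quasi-isometry constants), and locate $p_1,p_2,q_1,q_2$: let $q_1,q_2$ be the points at distance $M_1$ from $x_1',x_2'$ along the geodesic rays extending $[x_2',x_1']$ and $[x_1',x_2']$ respectively (so $[x_1',x_2']\subseteq[q_1,q_2]$), and set $[p_1,p_2]=[x_1',x_2']$, which indeed equals $[x_1',x_2']\cap[q_1,q_2]$.

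The main obstacle I expect is the bookkeeping in step three: one has to verify that the orientation conditions built into the definition of $Cyl_{[\cdot,\cdot]}$ are preserved, and that "far enough out" can be made uniform — i.e. that the required overshoot distance $M_1$ depends only on $\phi$ (equivalently on $\psi$ and the cocompactness constants) and not on $x_1,x_2$. This is where the Bounded Cancellation Lemma does the real work: it guarantees that the amount by which $\psi$ can "pull back" the endpoints of a long segment — and hence the discrepancy between $[z_1,z_2]$ and $[x_1,x_2]$ — is bounded independently of the segment. A minor secondary point is checking that the points $q_i$ at distance $M_1$ from $x_i'$ genuinely exist on the relevant rays (true because $T'$ is a complete $\mathbb{R}$-tree and $q_T$-geodesic rays extend indefinitely), and that the degenerate case $d_{T'}(x_1',x_2')$ small can be handled separately or absorbed into $M_1$. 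Everything else is a routine application of the quasi-isometry estimates from Lemma~\ref{lem:stretch} and the hyperbolicity of $T,T'$.
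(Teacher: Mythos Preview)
Your overall strategy---pick an $F$-equivariant quasi-inverse $\psi$, apply Lemma~\ref{lem:qi} to $\psi$, and then pass back via $\partial\phi=(\partial\psi)^{-1}$---is exactly the paper's. The gap is in your choice of $q_1,q_2$.

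You propose to put $q_i$ at distance $M_1$ from $x_i'$ along an \emph{arbitrary} geodesic extension of $[x_1',x_2']$ in $T'$, and then claim that the resulting $[z_1,z_2]\subseteq[\psi(q_1),\psi(q_2)]$ will contain $[x_1,x_2]$. But the Morse lemma (equivalently, bounded cancellation) only tells you that $x_i$ lies within a bounded distance of $[\psi(q_1),\psi(q_2)]$, not \emph{on} it: since $x_1',x_2'\in[q_1,q_2]$, the points $\psi(x_i')\approx x_i$ are close to the geodesic $[\psi(q_1),\psi(q_2)]$, yet in a tree ``close to'' is not ``contained in''. If $x_1\notin[\psi(q_1),\psi(q_2)]$ then certainly $x_1\notin[z_1,z_2]$, so $Cyl_{[z_1,z_2]}\not\subseteq Cyl_{[x_1,x_2]}$ and the argument breaks. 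Making $M_1$ large does not help: it pushes $\psi(q_i)$ farther away but does nothing to force $x_i$ onto the segment $[\psi(q_1),\psi(q_2)]$.

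The paper avoids this by first fixing a bi-infinite geodesic $[\xi,\zeta]\subseteq T$ with $[x_1,x_2]\subseteq[\xi,\zeta]$, and then placing $q_1,q_2$ on the specific line $[\phi(\xi),\phi(\zeta)]\subseteq T'$ (not on an arbitrary extension of $[x_1',x_2']$). Pulling back via $\psi$ then lands $\psi(q_i)$ within bounded distance of $[\xi,\zeta]$, a line that genuinely contains $x_1,x_2$; for a large enough overshoot this forces $[x_1,x_2]\subseteq[\psi(q_1),\psi(q_2)]$ with the required margin $M(\psi)$. A side effect is that $x_i'$ need not lie on $[\phi(\xi),\phi(\zeta)]$, so $[p_1,p_2]=[x_1',x_2']\cap[q_1,q_2]$ is in general a \emph{proper} subsegment of $[x_1',x_2']$, contrary to your final step where you set $[p_1,p_2]=[x_1',x_2']$. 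Once you adopt this choice of extension direction, the rest of your outline goes through.
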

\begin{proof}
Let $\psi: T'\to T$ be an $F$-equivariant quasi-isometry which is a quasi-inverse of $\phi$.
Let $M=M(\psi)>0$ be the constant provided by Lemma~\ref{lem:qi} for $\psi$.

Choose $\xi,\zeta\in \partial T$ such that the bi-infinite geodesic $[x_1,x_2]\subseteq [\xi,\zeta]$.
Then $x_1'=\phi(x_1)$ and $x_2'=\phi(x_2)$ are at distance at most $C_1=C_1(\phi)$ from $[\phi(\xi), \phi(\zeta)]$.
Thus $[\phi(\xi), \phi(\zeta)]\cap [x_1',x_2']=[p_1,p_2]$, where  $d(x_i',p_i)\le C_1$.

Recall that $\psi$ is a quasi-isometry that is a quasi-inverse of $\phi$, so that $\psi(\phi(\xi))=\xi$ and $\psi(\phi(\zeta))=\zeta$.
Note also that $[x_1,x_2]\subseteq [\xi,\zeta]=[\psi(\phi(\xi)),\psi(\phi(\zeta))]$ and that $[p_1,p_2]\subseteq
[\phi(\xi),\phi(\zeta)]$.
Therefore there is some $C_2=C_2(\phi)>0$ such that if $q_1\in [\phi(\xi), p_1]$, $q_2\in [p_2, \phi(\zeta)]$ are such that
$d(q_i,p_i)\ge C_2$ then $[x_1,x_2]\subseteq [\psi(q_1),\psi(q_2)]$ and $d(x_i,\psi(q_i))\ge M=M(\psi)$.

Choose $q_1\in [\phi(\xi), p_1]$, $q_2\in [p_2, \phi(\zeta)]$ such that $d(q_i,p_i)=C_2$. Thus $[x_1,x_2]\subseteq
[\psi(q_1),\psi(q_2)]$ and $d(x_i,\psi(q_i))\ge M$. Note that $d(x_i,\psi(q_i))\le C_3=C_3(\psi)$  since
$\psi$ is a quasi-isometry and $d(x_i',q_i)\le C_2+C_1$.

Thus $[x_1,x_2]\subseteq [\psi(q_1),\psi(q_2)]$ and $d(x_i,\psi(q_i))\ge M=M(\psi)$.
Then by Lemma~\ref{lem:qi}, applied to $\psi$, we have
\[
\psi(Cyl_{[q_1,q_2]})\subseteq Cyl_{[x_1,x_2]}.
\]
Applying $\phi$, we obtain
\[
Cyl_{[q_1,q_2]}\subseteq \phi(Cyl_{[x_1,x_2]}).
\]
Note that by construction $[p_1,p_2]=[x_1',x_2']\cap [q_1,q_2]$. Moreover,
$d(q_i,x_i')\le C_1+C_2$. Thus all the requirements of the proposition are satisfied, which completes the proof.
\end{proof}

\begin{cor}\label{cor:0}
Let $\mu\in Curr(F)$ and let $T,T'\in cv(F)$. Then
\[
h_{T}(\mu)>0 \iff h_{T'}(\mu)>0.
\]
\end{cor}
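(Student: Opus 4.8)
The plan is to transport cylinder sets back and forth between $T$ and $T'$ along an $F$-equivariant quasi-isometry and to combine this with the exponential-decay reformulation of positivity of geometric entropy. By symmetry between $T$ and $T'$ it will suffice to prove one implication, say that $h_{T'}(\mu)>0$ forces $h_T(\mu)>0$ (the case $\mu=0$ being trivial since then $h_T(\mu)=h_{T'}(\mu)=\infty$). So assume $\mu\ne 0$. First I would fix an $F$-equivariant quasi-isometry $\phi\colon T\to T'$ given by an orbit map, so that $\phi(gx_0)=gx_0'$ for all $g\in F$; then $\partial q_{T'}=\partial\phi\circ\partial q_T$, so that under the canonical identifications $\partial^2 q_T$ and $\partial^2 q_{T'}$ of $\partial^2 T$ and $\partial^2 T'$ with $\partial^2 F$ the map $\partial^2\phi$ is the identity of $\partial^2 F$.

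Next I would fix $s$ with $0<s<h_{T'}(\mu)$ and, by the remark following Definition~\ref{defn:ge}, obtain $C_0>0$ with $\mu(Cyl_{[u,v]}^{T'})\le C_0\exp(-s\,d_{T'}(u,v))$ for all $u,v\in T'$ with $d_{T'}(u,v)\ge 1$. Given $x_1,x_2\in T$ with $d_T(x_1,x_2)$ large, put $x_i'=\phi(x_i)$ and apply Lemma~\ref{lem:qi}: there are $y_1,y_2\in[x_1',x_2']$ with $d_{T'}(x_i',y_i)=M$ and $\phi(Cyl_{[x_1,x_2]})\subseteq Cyl_{[y_1,y_2]}$. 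Since $\partial^2\phi$ is the identity on $\partial^2 F$, this reads $Cyl_{[x_1,x_2]}^T\subseteq Cyl_{[y_1,y_2]}^{T'}$ inside $\partial^2 F$, so $\mu(Cyl_{[x_1,x_2]}^T)\le\mu(Cyl_{[y_1,y_2]}^{T'})$. As $y_1,y_2$ lie on the $T'$-geodesic $[x_1',x_2']$ at distance $M$ from its endpoints, $d_{T'}(y_1,y_2)=d_{T'}(x_1',x_2')-2M$, which is $\ge 1$ once $d_T(x_1,x_2)$ is large because $\phi$ is a quasi-isometry. Combining with Lemma~\ref{lem:stretch}(3), which gives $d_{T'}(x_1',x_2')\ge\lambda_1\,d_T(x_1,x_2)-C_1$ where $\lambda_1=\inf_{g\ne 1}||g||_{T'}/||g||_T$ and $\lambda_1>0$ precisely because $\phi$ is a quasi-isometry, I would get a constant $C_2>0$ with $\mu(Cyl_{[x_1,x_2]}^T)\le C_2\exp(-s\lambda_1\,d_T(x_1,x_2))$ for all sufficiently long $[x_1,x_2]$ in $T$. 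Hence $h_T(\mu)\ge s\lambda_1>0$, and letting $s\to h_{T'}(\mu)$ gives in fact $h_T(\mu)\ge\lambda_1 h_{T'}(\mu)$. Running the same argument with $T$ and $T'$ interchanged yields $h_{T'}(\mu)\ge\lambda_1' h_T(\mu)$ with $\lambda_1'=\inf_{g\ne 1}||g||_T/||g||_{T'}>0$, which together with the previous bound gives $h_T(\mu)>0\iff h_{T'}(\mu)>0$.

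The whole thing is essentially bookkeeping with the constants furnished by Lemma~\ref{lem:qi} and Lemma~\ref{lem:stretch}, so there is no serious obstacle; the one point that must be handled with care — the "hard part" such as it is — is the identification of the cylinder sets $Cyl_{[x,y]}^T$ and $Cyl_{[u,v]}^{T'}$ as subsets of the single space $\partial^2 F$, together with the observation that an $F$-equivariant quasi-isometry realized by an orbit map induces the identity on $\partial^2 F$. This is exactly what converts the set inclusion of Lemma~\ref{lem:qi} into a genuine monotonicity statement $\mu(Cyl_{[x_1,x_2]}^T)\le\mu(Cyl_{[y_1,y_2]}^{T'})$ for the single measure $\mu$, after which the estimates chain together automatically. (Note that the two comparisons obtained above actually give the quantitative bounds $\lambda_1 h_{T'}(\mu)\le h_T(\mu)\le\lambda_2 h_{T'}(\mu)$ with $\lambda_2=1/\lambda_1'=\sup_{g\ne 1}||g||_{T'}/||g||_T$, which is stronger than the corollary; the corollary follows from the positivity of $\lambda_1$ and $\lambda_2$.)
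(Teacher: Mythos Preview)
Your proof is correct and follows essentially the same route as the paper's: transport cylinders via Lemma~\ref{lem:qi} along an $F$-equivariant quasi-isometry $\phi:T\to T'$, then use a linear lower bound on $d_{T'}(\phi(x_1),\phi(x_2))$ in terms of $d_T(x_1,x_2)$ to conclude. The only difference is cosmetic: for the distance comparison you invoke Lemma~\ref{lem:stretch}(3), which yields the sharper two-sided estimate $\lambda_1\,h_{T'}(\mu)\le h_T(\mu)\le \lambda_2\,h_{T'}(\mu)$, whereas the paper uses the raw $(\lambda,\lambda)$-quasi-isometry constants directly and is content with $h_T(\mu)\ge s/(3\lambda)$.
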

\begin{proof}
Suppose that $h_{T'}(\mu)>0$. Hence there exists $s>0$, $C>0$ such
that for every $x,y\in T'$, $x\ne y$, we have
\[
\mu(Cyl_{[x,y]}) \le C \exp( -s\, d_{T'}(x,y)).
\]
Let $\phi:T\to T'$ be a $F$-equivariant
$(\lambda,\lambda)$-quasi-isometry, where $\lambda\ge 1$ (i.e. a quasi-isometry with both the multiplicative and the additive constants equal to 

$\lambda$).
Let $M,C>0$ be provided by Lemma~\ref{lem:qi}. Let $x_1,x_2\in
T$ be such that $N:=d_{T}(x_1,x_2)>20\lambda^2M$. Let $x_i'=\phi(x_i)\in
T'$, $i=1,2$. Thus $d_{T'}(x_1',x_2')\ge N/\lambda-\lambda\ge N/{2\lambda}$.
Let $y_1,y_2\in [x_1',x_2']$ be such that
$d_{T'}(x_1',y_1)=d_{X'}(x_2',y_2)=M$. Thus
\[
d_{T'}(y_1,y_2)\ge \frac{N}{2\lambda}-2M\ge \frac{N}{3\lambda}=\frac{d_T(x_1,x_2)}{3\lambda}.
\]

By Lemma~\ref{lem:qi} we have:
\[
\phi(Cyl_{[x_1,x_2]})\subseteq Cyl_{[y_1,y_2]}
\]

Recall that under the identifications  $\partial q_T:\partial T\rightarrow \partial
F$ and  $\partial q_{T'}:\partial T'\rightarrow\partial F$, for any $S\subseteq \partial T$, $\partial q_T(S) = \partial q_{T'}(\phi(S))\subseteq\partial F$.
Therefore
\begin{gather*}
\mu(Cyl_{[x_1,x_2]})\le \mu(Cyl_{[y_1,y_2]})\le C \exp( -s
d_{T'}(y_1,y_2))\le\\
\le C \exp(-s\frac{d_T(x_1,x_2)}{3\lambda}).
\end{gather*}
This implies that $h_T(\mu)\ge \frac{s}{3\lambda}>0$, as required.
\end{proof}

Corollary~\ref{cor:0} implies that the following notion is well-defined.
We say that a current $\mu\in Curr(F)$ has \emph{exponential decay}
or \emph{decays exponentially fast} if for some (equivalently, for any) $T\in cv(F)$ we have $h_T(\mu)>0$.
Similarly, we say that $\mu$ has \emph{subexponential decay}
 if for some (equivalently, for any) $T\in cv(F)$ we have $h_T(\mu)=0$.

Recall that for a graph $\Gamma$ we denote by $\mathcal P(\Gamma)$ the set of all nontrivial reduced finite edge-paths in $\Gamma$.

\begin{prop}\label{prop:cont}
Let $\mu\in Curr(F), \mu\ne 0$. Then the function
\[
E_\mu: cv(F)\to \mathbb R, \quad T\mapsto h_T(\mu)
\]
is continuous.
\end{prop}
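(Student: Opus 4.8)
The plan is to reduce continuity of $E_\mu$ to a quantitative comparison of $h_T(\mu)$ and $h_{T'}(\mu)$ when $T'$ is close to $T$ in $cv(F)$, using the metric distortion estimates from Lemma~\ref{lem:stretch} together with the cylinder-inclusion estimates from Lemma~\ref{lem:qi} and Proposition~\ref{prop:qi1}. The key point is that for a fixed marking $\alpha$, two trees $T,T'\in U_\alpha$ are $F$-equivariantly bi-Lipschitz: taking $\phi\colon T\to T'$ to be the identity on the underlying graph $\widetilde\Gamma$ (rescaling each edge), one has, for all $x,y\in T$,
\[
\Lambda_1(T,T')\, d_T(x,y)\le d_{T'}(\phi(x),\phi(y))\le \Lambda_2(T,T')\, d_T(x,y),
\]
where $\Lambda_1,\Lambda_2$ are the min/max ratios of corresponding edge-lengths; in fact on the graph level $\phi$ maps geodesics to geodesics, so no additive constant is needed and cylinders map exactly to cylinders: $\phi(Cyl_{[x,y]}^T)=Cyl_{[\phi(x),\phi(y)]}^{T'}$ under the canonical identification of $\partial T$ with $\partial T'$. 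As $T'\to T$ within $U_\alpha$, both $\Lambda_1(T,T')$ and $\Lambda_2(T,T')$ tend to $1$.

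First I would fix $T$ and a simplicial chart $\alpha$ with $T\in U_\alpha$, and restrict attention to $T'\in U_\alpha$ (this suffices for continuity since $cv(F)$ is covered by the open cones $U_\alpha$, and a given $T$ has a neighborhood inside $U_\alpha$). Given $T'\in U_\alpha$, set $\Lambda_1=\Lambda_1(T,T')$, $\Lambda_2=\Lambda_2(T,T')$. Using $\phi(Cyl_{[x,y]}^T)=Cyl_{[x,y]}^{T'}$ (same combinatorial segment, viewed in the two metrics) and $\langle v,\mu\rangle_\alpha$ being independent of the metric, I would write, for any reduced path $v\in\mathcal P(\Gamma)$,
\[
\frac{-\log\langle v,\mu\rangle_\alpha}{\mathcal L_{T'}(v)}
=\frac{-\log\langle v,\mu\rangle_\alpha}{\mathcal L_T(v)}\cdot\frac{\mathcal L_T(v)}{\mathcal L_{T'}(v)},
\]
and since $\Lambda_1\le \mathcal L_{T'}(v)/\mathcal L_T(v)\le \Lambda_2$ for every $v$ (with $\mathcal L_T(v)\to\infty\iff \mathcal L_{T'}(v)\to\infty$), taking $\liminf$ over $v$ with $\mathcal L_T(v)\to\infty$ and invoking the combinatorial formula from the Proposition preceding Lemma~\ref{lem:qi} gives
\[
\frac{1}{\Lambda_2}\, h_T(\mu)\ \le\ h_{T'}(\mu)\ \le\ \frac{1}{\Lambda_1}\, h_T(\mu).
\]
Letting $T'\to T$ in $U_\alpha$ forces $\Lambda_1,\Lambda_2\to 1$, hence $h_{T'}(\mu)\to h_T(\mu)$, which is the desired continuity. (One should note $h_T(\mu)<\infty$ because $\mu\ne 0$, so these inequalities are not vacuous.)

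The main obstacle is mostly bookkeeping rather than a genuine difficulty: one must be careful that the "identity" quasi-isometry $\phi\colon T\to T'$ for $T,T'$ in the same cone really does preserve reduced edge-paths and hence cylinders exactly, so that the clean two-sided bound above holds without the additive constants that Lemma~\ref{lem:qi} and Proposition~\ref{prop:qi1} would otherwise introduce. If one prefers to avoid this and work with a general $F$-equivariant quasi-isometry $\phi$, then one uses Lemma~\ref{lem:qi} and Proposition~\ref{prop:qi1} to sandwich $\mu(Cyl_{[x,y]}^T)$ between $\mu$ of two cylinders in $T'$ whose defining segments have $T'$-lengths within $\Lambda_1 d_T(x,y)\pm C$ and $\Lambda_2 d_T(x,y)\pm C$; dividing by $d_T(x,y)$ and taking $\liminf$ as $d_T(x,y)\to\infty$, the additive constant $C$ washes out and one recovers the same inequalities $h_T(\mu)/\Lambda_2\le h_{T'}(\mu)\le h_T(\mu)/\Lambda_1$, with $\Lambda_i=\Lambda_i(T,T')$ the extremal distortion constants from Lemma~\ref{lem:stretch} (which satisfy $\Lambda_1,\Lambda_2\to 1$ as $T'\to T$ by continuity of translation length functions). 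Either route yields continuity of $E_\mu$ on all of $cv(F)$.
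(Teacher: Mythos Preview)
Your proposal is correct and takes essentially the same approach as the paper: restrict to an elementary chart, use that the weights $\langle v,\mu\rangle_\alpha$ are independent of the metric structure while the lengths $\mathcal L(v)$ scale by a ratio bounded between the min and max edge-length ratios, and pass to the $\liminf$. The paper phrases the distortion as $(1-\epsilon_1)\mathcal L'(v)\le \mathcal L(v)\le (1+\epsilon_1)\mathcal L'(v)$ rather than via your $\Lambda_1,\Lambda_2$, but this is cosmetic; your ``second route'' through Lemma~\ref{lem:qi} and Proposition~\ref{prop:qi1} is not needed and the paper does not use it.
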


\begin{proof}
It suffices to check that for every simplicial chart $\alpha:F\to
\pi_1(\Gamma,p)$ on $F$ the restriction of $E_\mu$ to the elementary
chart $V_\alpha\subseteq cv(F)$ is continuous.

Let $E\Gamma=E^+\Gamma\sqcup E^-\Gamma$ be an orientation on $\Gamma$, let $m=\#E^+\Gamma$ and let $E^+\Gamma=\{e_1,\dots, e_m\}$.
Recall that $V_\alpha$ consists of all points of the form $(\alpha,
\mathcal L)$, where $\mathcal L$ is a nondegenerate semi-metric
structure on $\Gamma$.

Let $T=(\alpha, \mathcal L)\in U_{\alpha}$. Let $\epsilon>0$ be arbitrary. Choose $\epsilon_1>0$ such that $\epsilon_1 h_T(\mu)<\epsilon$.
Then  there exists a neighborhood $\Omega$ of $T$ in $V_\alpha$ with the following property. For any $v\in \mathcal P(\Gamma)$ and for any $T'=(\alpha, \mathcal L')\in \Omega$ we have
\[
 (1-\epsilon_1) \mathcal L'(v)\le  \mathcal L(v)\le (1+\epsilon_1) \mathcal L'(v).
\]
Therefore 
\[
\frac{-\log \langle v, \mu\rangle_\alpha}{\mathcal L(v)}(1-\epsilon_1)\le \frac{-\log \langle v, \mu\rangle_\alpha}{\mathcal L'(v)}\le \frac{-\log \langle v, \mu\rangle_\alpha}{\mathcal L(v)}(1+\epsilon_1).
\]
It follows that
\[
h_T(\mu)(1-\epsilon_1)\le h_{T'}(\mu)\le h_T(\mu)(1+\epsilon_1)
\]
and hence
\[
|E_\mu(T)-E_\mu(T')|=|h_{T'}(\mu)-h_T(\mu)|\le h_\mu(T)\epsilon_1\le \epsilon.
\]

Thus $E_\mu$ is continuous at the point $T$ and, since $T\in V_\alpha$ was arbitrary, $E_\mu$ is continuous on $V_\alpha$, as required.
\end{proof}

\begin{notation}\label{notation:support}
Let $\alpha: F\to \pi_1(\Gamma)$ be a simplicial chart and let $\mu\in Curr(F)$ be a geodesic current. Define
\[
supp_\alpha(\mu):=\{v\in \mathcal P(\Gamma): \langle v,\mu\rangle_\alpha>0\}.
\]
We refer to $supp_\alpha(\mu)$ as the \emph{support of $\mu$ with respect to $\alpha$}.

Let $\mathcal L$ be a metric graph structure on $\Gamma$ and let
$T_{(\alpha, \mathcal L)}\in cv(F)$ correspond to the universal cover of $\Gamma$ with the edge-length lifted from $\mathcal L$.
Define
\[
\rho(T_{(\alpha,\mathcal L)},\mu):=\liminf_{R\to\infty} \frac{\log\beta(R)}{R}
\]
where $\beta(R)=\#\{v\in supp_\alpha(\mu): \mathcal L(v)\le R\}$.
Thus $\rho(T_{(\alpha,\mathcal L)})$ measures the exponential growth rate of the support $supp_\alpha(\mu)$ with respect to the metric structure $\mathcal L$.

Let now $\mu\in Curr(F)$, $\mu\ne 0$ and let $T\in cv(F)$ be arbitrary. There exists an (essentially unique) marked metric graph structure $(\alpha,\mathcal L)$ such that $T=T_{(\alpha,\mathcal L)}$. Put $\rho(T,\mu):=\rho(T_{(\alpha,\mathcal L)})$.
\end{notation}

It is not hard to show that if $\rho(T_0,\mu)=0$ for some $T_0\in cv(F)$ then $\rho(T,\mu)=0$ for every $T\in cv(F)$.

\begin{thm}\label{thm:support}
Let $T\in cv(F)$ and $\mu\in Curr(F)$, $\mu\ne 0$.
Then
\[
h_T(\mu)\le \rho(T,\mu)
\]
\end{thm}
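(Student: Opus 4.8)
The plan is to combine a Kirchhoff-type additivity for the weights $\langle v,\mu\rangle_\alpha$ with a pigeonhole argument on finite families of first-passage reduced paths. Throughout, fix the marked metric graph structure $(\alpha,\mathcal L)$ with $T=T_{(\alpha,\mathcal L)}$ and $\Gamma=T/F$, and put $L_{\max}=\max_{e\in E\Gamma}\mathcal L(e)$ and $L_{\min}=\min_{e\in E\Gamma}\mathcal L(e)>0$; recall that $h_T(\mu)=\liminf_{\mathcal L(v)\to\infty}\bigl(-\log\langle v,\mu\rangle_\alpha\bigr)/\mathcal L(v)$, the $\liminf$ being taken over $v\in\mathcal P(\Gamma)$.

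First I would record the additivity identity: for every $v\in\mathcal P(\Gamma)$,
\[
\langle v,\mu\rangle_\alpha=\sum_{e:\ ve\in\mathcal P(\Gamma)}\langle ve,\mu\rangle_\alpha .
\]
Indeed, taking a lift $[x,y]\subseteq T$ of $v$, any bi-infinite geodesic of $T$ containing $[x,y]$ with matching orientation leaves the terminal vertex $y$ through a unique non-backtracking edge, and since $\Gamma$ (hence $T$) has no degree-one vertices there is at least one such edge; this yields a partition of $Cyl_{[x,y]}$ into the finitely many Borel sets $Cyl_{[x,y_e]}$, to which one applies additivity of the measure $\mu$ together with $F$-invariance. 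Iterating this identity — starting from $C_0:=\sum_{e\in E\Gamma}\langle e,\mu\rangle_\alpha$ and repeatedly expanding each term $\langle v,\mu\rangle_\alpha$ with $\mathcal L(v)<R$ — gives, for every $R>0$,
\[
\sum_{v\in A_R}\langle v,\mu\rangle_\alpha=C_0,
\]
where $A_R:=\{\,v\in\mathcal P(\Gamma):\ \mathcal L(v)\ge R>\mathcal L(v^-)\,\}$ and $v^-$ denotes $v$ with its last edge deleted (by convention the empty path, of length $0$, when $|v|=1$). Here $A_R$ is finite, since $v\in A_R$ forces $R\le\mathcal L(v)<R+L_{\max}$ and hence $|v|<(R+L_{\max})/L_{\min}$; and $C_0>0$ because $\mu\ne 0$, so $\mu(Cyl_{[x,y]})>0$ for some segment $[x,y]$, and then the label of its initial edge has positive $\langle\cdot,\mu\rangle_\alpha$.

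Next I would set $A_R':=A_R\cap supp_\alpha(\mu)$; discarding the zero terms leaves $\sum_{v\in A_R'}\langle v,\mu\rangle_\alpha=C_0$. Each $v\in A_R'$ lies in $supp_\alpha(\mu)$ and has $\mathcal L(v)<R+L_{\max}$, so $|A_R'|\le\beta(R+L_{\max})$ in the notation of Notation~\ref{notation:support}. By pigeonhole there is some $v_R\in A_R'$ with $\langle v_R,\mu\rangle_\alpha\ge C_0/\beta(R+L_{\max})$ and $\mathcal L(v_R)\ge R$, so for all sufficiently large $R$
\[
\frac{-\log\langle v_R,\mu\rangle_\alpha}{\mathcal L(v_R)}\ \le\ \frac{\log\beta(R+L_{\max})-\log C_0}{R}.
\]
Since $\mathcal L(v_R)\to\infty$, the family $\{v_R\}$ is admissible in the $\liminf$ defining $h_T(\mu)$, so passing to the $\liminf$ as $R\to\infty$ and using $(R+L_{\max})/R\to 1$ together with $(\log C_0)/R\to 0$ yields
\[
h_T(\mu)\ \le\ \liminf_{R\to\infty}\frac{\log\beta(R+L_{\max})}{R+L_{\max}}\ =\ \rho(T,\mu),
\]
which is the desired inequality.

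The only genuinely delicate point is the total-mass identity $\sum_{v\in A_R}\langle v,\mu\rangle_\alpha=C_0$: one has to verify that the iterated expansion really terminates and produces exactly the antichain $A_R$. I would make this rigorous by induction on a combinatorial-length cutoff $n$, showing that the partial sum of $\langle v,\mu\rangle_\alpha$ over the finite set $\{v\in A_R:|v|\le n\}\cup\{v:|v|=n,\ \mathcal L(v)<R\}$ equals $C_0$ for every $n$, and noting that the second set is empty as soon as $n>R/L_{\min}$.
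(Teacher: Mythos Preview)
Your proof is correct and follows essentially the same approach as the paper: both arguments rest on the Kirchhoff-type additivity of cylinder weights, decomposing a fixed positive mass into contributions from reduced paths of $\mathcal L$-length roughly $R$ in $supp_\alpha(\mu)$ and then counting those paths by $\beta(R+\text{const})$. The only differences are organizational: the paper fixes a single edge $e$ with $\langle e,\mu\rangle_\alpha>0$ and, for each $s<h_T(\mu)$, bounds every long cylinder weight by $e^{-sn}$ to get $\langle e,\mu\rangle_\alpha\le\beta(n+c)e^{-sn}$ (hence $\rho\ge s$), whereas you take the total edge mass $C_0$ and apply pigeonhole to extract a single path $v_R$ with $\langle v_R,\mu\rangle_\alpha\ge C_0/\beta(R+L_{\max})$, which directly witnesses the $\liminf$ bound without passing through an auxiliary $s$.
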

\begin{proof}
We may assume that $T=T_{(\alpha,\mathcal L)}$ for some marked metric graph structure $(\alpha:F\to\pi_1(\Gamma),\mathcal L)$ on $F$.
If $h_T(\mu)=0$, there is nothing to prove. Suppose now that $h_T(\mu)>0$. Let $s>0$ be arbitrary such that $s<h_T(\mu)$. Thus there exists $n_0\ge 1$ such that for any $p,q\in T$ with $d_T(p,q)\ge n_0$ we have
$\mu(Cyl_{[p,q]})\le \exp(-sd_T(p,q))$.

Since $\mu\ne 0$, there exists an edge $e$ of $\Gamma$ such that $\langle e,\mu\rangle_\alpha>0$, that is $e\in supp_\alpha(\mu)$. Let $[x,y]\subseteq T$ be a lift of $e$ to $T$. Thus
$\mu(Cyl_{[x,y]})=\langle e,\mu\rangle_\alpha>0$.
For any integer $n\ge n_0$ let $[x,z_1],\dots [x,z_m]$ be all geodesic segments of length $n$ in $T$ that contain $[x,y]$ as an initial segment and such that
$\mu(Cyl_{[x,p_i]})>0$, where $p_i=z_i$  if $z_i$ is a vertex of $T$, whereas if $z_i$ is an interior point of an edge, $p_i$ is the endpoint of that edge further away from $x$ than $z_i$. 
Then $\mu(Cyl_{[x,p_i]})\le \exp(-sn)$. Moreover, the cylinders $Cyl_{[x,p_1]},\dots, Cyl_{[x,p_m]}$ are disjoint and
$\mu(Cyl_{[x,y]})=\sum_{i=1}^m \mu(Cyl_{[x,p_i]})$. Let $v_i\in \mathcal P(\Gamma)$ be the path in $\Gamma$ which labels the segment $[x,p_i]$ in $T$. Note that by construction $n\le \mathcal L(v_i)\le n+c$, where $c>0$ is the length of the longest edge in $(\Gamma,\mathcal L)$. Thus, again, by construction, $m\le \beta(n+c)$. We have
\[
0<\langle e,\mu\rangle_\alpha=\sum_{i=1}^m \langle v_i,\mu\rangle_\alpha\le m \exp(-sn)\le \beta(n+c)\exp(-sn).
\]
It follows that $\rho(T,\mu)\ge s$ since if $\rho(T,\mu)<s$ then $\beta(n+c)\exp(-sn)\to 0$ as $n\to\infty$, contradicting the fact that $\langle e,\mu\rangle_\alpha>0$. Thus for every $0<s<h_T(\mu)$ we have $\rho(T,\mu)\ge s$. Therefore $\rho(T,\mu)\ge h_T(\mu)$, as required.
\end{proof}
\begin{rem}
Suppose $T=T_{(\alpha,\mathcal L)}$ where for every edge $e$ of $\Gamma$ we have $\mathcal L(e)=1$.
Let $\Omega(\Gamma,\mu)$ be the set of all semi-infinite reduced edge paths $\gamma=e_1,\dots, e_n, \dots $ in $\Gamma$ such that for every finite subpath $v$ of $\gamma$ we have $v\in supp_\alpha(\mu)$. We can think of $\Omega(\Gamma,\mu)$ as a subset of the set $A^\omega$ of all semi-infinite words in the alphabet $A$ consisting of all oriented edges of $\Gamma$. The subset $\Omega(\Gamma,\mu)\subseteq A^\omega$ is clearly invariant under the shift map $\sigma: A^\omega\to A^\omega$ which erases the first letter of a semi-infinite word from $A^\omega$. Thus $\Omega(\Gamma,\mu)$ is a subshift (not necessarily of finite type) of the full shift $(A^\omega,\sigma)$. It is easy to see from the definitions that in this case $\rho(T_{(\alpha,\mathcal L)})$ is equal to the topological entropy $h_{topol}(\Omega(\Gamma,\mu))$ of the subshift $\Omega(\Gamma,\mu)$ of the set $A^\omega$. In this situation Theorem~\ref{thm:support} says that $h_T(\mu)\le h_{topol}(\Omega(\Gamma,\mu))$.

As we already observed, it is not hard to show that if $\rho(T_0,\mu)=0$ for some $T_0\in cv(F)$ then $\rho(T,\mu)=0$ for every $T\in cv(F)$. In this case Theorem~\ref{thm:support} implies $h_T(\mu)=0$ for every $T\in cv(F)$.
\end{rem}

\section{Tame currents}

\begin{defn}[Tame current with respect to a tree]\label{defn:tameT}
Let $\mu\in Curr(F)$ and $T\in cv(F)$. We say that $\mu$ is \emph{tame
  with respect to $T$} or \emph{$T$-tame} if for every $M\ge 1$ there is $C=C(M)\ge 1$
such that whenever $a_1,b_1,a_2,b_2\in T$ satisfy $d(a_1,a_2)\le M,
d(b_1,b_2)\le M$, $a_1\ne b_1$, $a_2\ne b_2$ then
\[
\frac{1}{C}\mu ( Cyl_{[a_2,b_2]})   \le \mu ( Cyl_{[a_1,b_1]})\le C \mu ( Cyl_{[a_2,b_2]}).
\]
We call $C=C(M)$ the \emph{$T$-tameness constant} corresponding to $M$.
\end{defn}

\begin{lem}\label{lem:wtame}
Let $T\in cv(F)$ and $\mu\in Curr(F)$. Suppose that there is some $N\ge 1$ such that for every $M\ge N$ there exists $D=D(M)\ge 1$ such that whenever
$[x,y]\subseteq [a,b]$ where $x\ne y$ and $d_T(a,x)=d_T(y,b)=M$ then
\[
\mu ( Cyl_{[x,y]})\le D \mu ( Cyl_{[a,b]}).
\]
Then $\mu$ is $T$-tame.
\end{lem}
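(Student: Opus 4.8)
The plan is to treat long and short geodesic segments separately: for long segments the hypothesis applies directly via a two-sided trimming manoeuvre together with the $\mathbb R$-tree geometry of the projections $a_i\mapsto$ (nearest point of $[a_1,b_1]$), while for short segments a compactness argument suffices once one knows that \emph{every} cylinder has strictly positive $\mu$-measure. So assume throughout that $\mu\neq0$ (for $\mu=0$ the statement is vacuous). \textbf{Step 1 (all cylinders are positive).} For any nondegenerate $[u,v]\subseteq T$ and any real $m\ge N$ there is a finite disjoint decomposition $Cyl_{[u,v]}=\bigsqcup_{[A,B]}Cyl_{[A,B]}$, where $[A,B]$ ranges over the segments obtained from $[u,v]$ by extending by exactly $m$ at \emph{each} end (given $(\zeta_1,\zeta_2)\in Cyl_{[u,v]}$, travel distance $m$ along $[\zeta_1,\zeta_2]$ past each endpoint of $[u,v]$ to recover the unique such $[A,B]$). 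Since $[u,v]$ is the two-sided $m$-trim of every such $[A,B]$, the hypothesis gives $\mu(Cyl_{[u,v]})\le D(m)\,\mu(Cyl_{[A,B]})$ for \emph{each} term, so $\mu(Cyl_{[u,v]})>0$ forces $\mu(Cyl_{[A,B]})>0$ for every such $[A,B]$. Embedding an arbitrary orientation-compatible super-segment $[u',v']\supseteq[u,v]$ in a bi-infinite geodesic and taking for $[A,B]$ the two-sided $m$-extension of $[u,v]$ along that geodesic with $m$ large enough that $[u',v']\subseteq[A,B]$, one gets $\mu(Cyl_{[u',v']})\ge\mu(Cyl_{[A,B]})>0$: positivity propagates to all orientation-compatible super-segments. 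Now $\mu\neq0$ gives a nonzero cylinder; replacing the segment by a tiny sub-segment lying inside a single edge $e_0$ of $T$ and using the elementary identity $Cyl_{[a,b]}=Cyl_{[\hat a,\hat b]}$ for the vertices $\hat a,\hat b$ bracketing $[a,b]$, we get $\mu(Cyl_{e_0})>0$; since $\Gamma=T/F$ has all vertex degrees $\ge3$, $e_0$ extends through $\Gamma$ to a reduced path passing through any prescribed edge, so $\mu(Cyl_e)>0$ for every edge $e$ of $T$; and extending an edge forward along an arbitrary reduced path yields $\mu(Cyl_{[a,b]})>0$ for every nondegenerate $[a,b]$.

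\textbf{Step 2 (trimming estimate and long segments).} Fix $M\ge1$ and set $D_0:=D(\max(M,N))$, $R_0:=2\max(M,N)$. For $[a,b]$ with $d_T(a,b)>R_0$ and any $r\in[N,\max(M,N)]$, the two-sided $r$-trim of $[a,b]$ contains the two-sided $\max(M,N)$-trim $[x_\ast,y_\ast]$, so its cylinder measure is at most $\mu(Cyl_{[x_\ast,y_\ast]})\le D_0\,\mu(Cyl_{[a,b]})$ by the hypothesis. Hence the one-endpoint estimate: if $d_T(a,b)>R_0$ and $x\in[a,b]$ with $d_T(a,x)\le M$, then $\mu(Cyl_{[a,b]})\le\mu(Cyl_{[x,b]})\le D_0\,\mu(Cyl_{[a,b]})$ — the left inequality is automatic, and for the right one takes $r=\max(d_T(a,x),N)$, notes that the two-sided $r$-trim $[a',b']$ of $[a,b]$ then lies inside $[x,b]$, so $Cyl_{[x,b]}\subseteq Cyl_{[a',b']}$. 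Applying this at each end in turn shows that trimming a segment of length $>R_0+M$ by at most $M$ at each end (by possibly unequal amounts) changes the cylinder measure by a factor in $[1,D_0^2]$.

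\textbf{Step 3 (conclusion).} Given $a_1,b_1,a_2,b_2$ with $d_T(a_1,a_2),d_T(b_1,b_2)\le M$ and $a_i\neq b_i$: if $d_T(a_1,b_1)>L_0:=R_0+3M$, let $c,d\in[a_1,b_1]$ be the nearest-point projections of $a_2,b_2$; standard $\mathbb R$-tree geometry gives $d_T(a_1,c),d_T(a_2,c),d_T(b_1,d),d_T(b_2,d)\le M$, that $c$ precedes $d$, and that $[c,d]=[a_1,b_1]\cap[a_2,b_2]$ with orientations agreeing; so $[c,d]$ is a $({\le}M,{\le}M)$-trim of both $[a_1,b_1]$ and $[a_2,b_2]$, each of length $>R_0+M$, and comparing both with $\mu(Cyl_{[c,d]})$ via Step 2 gives $\frac1{D_0^2}\mu(Cyl_{[a_2,b_2]})\le\mu(Cyl_{[a_1,b_1]})\le D_0^2\,\mu(Cyl_{[a_2,b_2]})$. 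If instead $d_T(a_1,b_1)\le L_0$, then $d_T(a_2,b_2)\le L_0+2M$; replacing each $[a_i,b_i]$ by its bracketing vertex pair $[\hat a_i,\hat b_i]$ (leaving cylinders unchanged) we get a configuration of uniformly bounded diameter with vertex endpoints, so by cocompactness of the $F$-action there are only finitely many $F$-orbits of such configurations, and on each the ratio $\mu(Cyl_{[\hat a_1,\hat b_1]})/\mu(Cyl_{[\hat a_2,\hat b_2]})$ is a well-defined positive real, finite by Step 1; let $C_B$ be the finite maximum of all these ratios and their reciprocals. Then $C=C(M):=\max(D_0^2,C_B)$ is a $T$-tameness constant for $M$, so $\mu$ is $T$-tame.

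The step I expect to be the crux is Step 1 — that the hypothesis forces every cylinder to have positive measure — since it is precisely what lets the compactness argument for short segments run; it depends on reading the hypothesis against the cylinder decomposition in the correct direction and then propagating positivity through $\Gamma$ using the degree-$\ge3$ condition. The $\mathbb R$-tree bookkeeping around the common sub-segment $[c,d]$ in Step 2 is routine but must be done carefully.
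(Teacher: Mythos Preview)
Your proof is correct and follows the same overall strategy as the paper's: reduce to a common sub-segment $[c,d]=[a_1,b_1]\cap[a_2,b_2]$ when the segments are long, and invoke cocompactness when they are short. Two differences are worth noting. First, your Step~2 obtains the asymmetric trimming estimate by trimming $[a,b]$ \emph{inward} symmetrically and then iterating at each end, which costs you a constant $D_0^2$; the paper instead proves the analogous inequality $(\dag)$ in one stroke by \emph{extending outward}: given $[x,y]\subseteq[a,b]$ with both trims $\le M$, it enlarges $[a,b]$ to $[a',b']$ so that $d_T(a',x)=d_T(b',y)=M$ exactly, and then applies the hypothesis directly to $[x,y]\subseteq[a',b']$, obtaining $\mu(Cyl_{[x,y]})\le D(M)\,\mu(Cyl_{[a',b']})\le D(M)\,\mu(Cyl_{[a,b]})$. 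This is slightly cleaner and avoids the squared constant. Second, your Step~1 --- showing that the hypothesis forces every cylinder to have positive $\mu$-measure --- is a genuine addition: the paper dismisses the short-segment case with ``the requirements are easily satisfied'' and a remark that the four points lie in a bounded finite subtree, but the finiteness of the ratios in that compactness argument does require exactly the positivity you establish. So your instinct that Step~1 is where the work hides is right; the paper simply leaves that detail to the reader.
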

\begin{proof}

Suppose that for every $M\ge N$ there is $D=D(M)\ge 1$ as in Lemma~\ref{lem:wtame}.

We need to prove that $\mu$ is $T$-tame. It is easy to see that it suffices to verify
the conditions of Definition~\ref{defn:tameT} for all sufficiently large $M$.

Let $M\ge N$ be arbitrary.
Suppose now that $[x,y]\subseteq [a,b]$ with $d_T(a,x)=d_T(y,b)\le M$ and  $x\ne y$.
Choose a geodesic segment $[a',b']$ in $T$ such that $[a,b]\subseteq [a',b']$ and such that $d_T(a',x)=d_T(b',y)=M$.
Then $Cyl_{[a',b']}\subseteq Cyl_{[a,b]}\subseteq Cyl_{[x,y]}$. Hence by assumption on $D=D(M)$ we have
\[
\mu(Cyl_{[a,b]})\le \mu(Cyl_{[x,y]})\le D \mu(Cyl_{[a',b']})\le D \mu(Cyl_{[a,b]})
\]
so that
\[
\mu(Cyl_{[a,b]})\le \mu(Cyl_{[x,y]})\le D \mu(Cyl_{[a,b]}).\tag{\dag}
\]
Thus $(\dag)$ holds whenever $[x,y]\subseteq [a,b]$ with $d_T(a,x)=d_T(y,b)\le M$.

Suppose now that $M\ge N$ and $a_1,b_1,a_2,b_2\in T$ satisfy $d(a_1,a_2)\le M,
d(b_1,b_2)\le M$. We may assume that $d(a_1,b_1)\ge 3M$ since otherwise the requirements of
Definition~\ref{defn:tameT} are easily satisfied.
(Indeed, if $d(a_1,b_1)\le 3M$, then the points $a_1,a_2,b_1,b_2$ lie in an $F$-translated of a
fixed closed ball of radius $6M$, which is a finite subtree of $T$).
Then $[a_1,b_1]\cap [a_2,b_2]$ is a non-degenerate geodesic segment.
Put $[x,y]=[a_1,b_1]\cap [a_2,b_2]$.

Then
\[
d(x,a_1), d(x,a_2), d(y,b_1),d(y,b_2)\le M
\]
and $[x,y]\subseteq [a_1,b_1]$, $[x,y]\subseteq [a_2,b_2]$.
Then by $(\dag)$ we have
\[
\mu(Cyl_{[a_1,b_1]})\le \mu(Cyl_{[x,y]})\le D \mu(Cyl_{[a_2,b_2]})
\]
and
\[
\mu(Cyl_{[a_2,b_2]})\le \mu(Cyl_{[x,y]})\le D \mu(Cyl_{[a_1,b_1]}).
\]
Therefore $\mu$ is $T$-tame with as required.
\end{proof}

\begin{prop}\label{prop:tame}
Let $\mu\in Curr(F)$ and let $T,T'\in cv(F)$.
Then $\mu$ is tame with respect to $T$ if and only if $\mu$ is tame
with respect to $T'$.
\end{prop}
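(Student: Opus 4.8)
The plan is to reduce the statement to a quasi-isometry argument via Lemma~\ref{lem:qi} and Proposition~\ref{prop:qi1}, exactly as in the proof of Corollary~\ref{cor:0}. By symmetry it suffices to prove one implication: assume $\mu$ is tame with respect to $T'$ and show it is tame with respect to $T$. Fix an $F$-equivariant quasi-isometry $\phi:T\to T'$; since the quotients are compact, $\phi$ has an $F$-equivariant quasi-inverse $\psi:T'\to T$, and we may take $\phi$ to be a $(\lambda,\lambda)$-quasi-isometry for some $\lambda\ge 1$. Let $M_\psi=M(\psi)$ be the constant from Lemma~\ref{lem:qi} applied to $\psi$, and let $M_1=M_1(\phi)$ be the constant from Proposition~\ref{prop:qi1}.

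First I would reduce, using Lemma~\ref{lem:wtame}, to verifying only the ``one-sided'' inequality: it is enough to show that for every $M\ge 1$ there is $D=D(M)\ge 1$ such that whenever $[x,y]\subseteq[a,b]$ in $T$ with $x\ne y$ and $d_T(a,x)=d_T(y,b)=M$, one has $\mu(Cyl_{[x,y]})\le D\,\mu(Cyl_{[a,b]})$. So fix such $M$ and such a configuration $[x,y]\subseteq[a,b]$ in $T$. Apply $\phi$ to the endpoints and use Proposition~\ref{prop:qi1} to the segment $[x',y']=[\phi(x),\phi(y)]$ in $T'$: it produces points $p_1,p_2,q_1,q_2\in T'$ with $[p_1,p_2]=[x',y']\cap[q_1,q_2]$, with $d_{T'}(q_i,x_i')\le M_1$ (where $x_1'=\phi(x)$, $x_2'=\phi(y)$), and with $Cyl_{[q_1,q_2]}\subseteq\phi(Cyl_{[x,y]})$. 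Hence, using the $\partial q_T$/$\partial q_{T'}$ identifications as in Corollary~\ref{cor:0}, $\mu(Cyl_{[x,y]})=\mu(\phi(Cyl_{[x,y]}))\ge\mu(Cyl_{[q_1,q_2]})$. On the other side, apply Lemma~\ref{lem:qi} to $\phi$ with the segment $[a,b]$: setting $a'=\phi(a)$, $b'=\phi(b)$, and choosing $r_1,r_2\in[a',b']$ with $d_{T'}(a',r_1)=d_{T'}(b',r_2)=M(\phi)$ gives $\phi(Cyl_{[a,b]})\subseteq Cyl_{[r_1,r_2]}$, whence $\mu(Cyl_{[a,b]})\ge\mu(Cyl_{[r_1,r_2]})$... no: we want an upper bound on $\mu(Cyl_{[x,y]})$ in terms of $\mu(Cyl_{[a,b]})$, so we need to compare $Cyl_{[q_1,q_2]}$ with a cylinder contained in $\phi(Cyl_{[a,b]})$; that comparison is precisely where $T'$-tameness enters.

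The main step is then the following bounded-distance comparison in $T'$. Because $\phi$ is a $(\lambda,\lambda)$-quasi-isometry, the images $a'=\phi(a)$ and $b'=\phi(b)$ lie within a bounded distance (depending only on $\lambda$, $M_1$ and $M$) of the endpoints of $[q_1,q_2]$ extended appropriately; concretely, I would argue that there is a segment $[c_1,c_2]\subseteq\phi(Cyl_{[a,b]})$-carrying geodesic with $d_{T'}(c_i,q_i)\le M_2$ for a constant $M_2=M_2(\phi,M)$ coming from $d_T(a,x)=d_T(y,b)=M$ together with the quasi-isometry constants. Applying $T'$-tameness with the bound $M_2$ yields $\mu(Cyl_{[q_1,q_2]})\le C'(M_2)\,\mu(Cyl_{[c_1,c_2]})\le C'(M_2)\,\mu(Cyl_{[a,b]})$, where the last inequality uses $Cyl_{[c_1,c_2]}\subseteq\phi(Cyl_{[a,b]})$ (obtained from a second application of Lemma~\ref{lem:qi} or Proposition~\ref{prop:qi1} to $\phi$ and the segment $[a,b]$) together with the boundary identification. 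Chaining the inequalities gives $\mu(Cyl_{[x,y]})\ge\mu(Cyl_{[q_1,q_2]})$ and $\mu(Cyl_{[q_1,q_2]})\le C'(M_2)\,\mu(Cyl_{[a,b]})$, i.e. $\mu(Cyl_{[x,y]})$ is bounded above, not below — so one must instead run the estimate in the other direction. The clean way is: from $Cyl_{[q_1,q_2]}\subseteq\phi(Cyl_{[x,y]})$ we get $\mu(Cyl_{[q_1,q_2]})\le\mu(Cyl_{[x,y]})$, and from a corresponding $Cyl_{[c_1,c_2]}\subseteq\phi(Cyl_{[a,b]})$ (with $c_i$ within bounded distance of $q_i$) plus $T'$-tameness we get $\mu(Cyl_{[x,y]})\le$ (bounded constant)$\cdot\,\mu(Cyl_{[q_1,q_2]})$ only if we also have the reverse containment; so in fact one needs both $Cyl_{[q_1,q_2]}\subseteq\phi(Cyl_{[x,y]})$ and, via Lemma~\ref{lem:qi} applied to $\phi$, an inclusion $\phi(Cyl_{[x,y]})\subseteq Cyl_{[q_1',q_2']}$ with $q_i'$ near $q_i$, then $T'$-tameness sandwiches $\mu(\phi(Cyl_{[x,y]}))$ between bounded multiples of $\mu(Cyl_{[q_1,q_2]})$. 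The same sandwich for $[a,b]$ gives $\mu(\phi(Cyl_{[a,b]}))$ comparable to $\mu$ of a cylinder whose endpoints are within bounded distance of those of $[x,y]$'s associated cylinder; combining and using $T'$-tameness once more for the bounded-distance shift between the two relevant $T'$-cylinders produces the desired $D=D(M)$.

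\textbf{Main obstacle.} The real work is bookkeeping: tracking that all the points $p_i,q_i,c_i,r_i,\dots$ produced by Lemma~\ref{lem:qi} and Proposition~\ref{prop:qi1} lie within a distance depending only on $\phi$ (equivalently on $\lambda$) and on $M$, and then invoking $T'$-tameness for that controlled distance. The delicate point is that when $[x,y]\subseteq[a,b]$ with $d_T(a,x)=d_T(y,b)=M$, the $\phi$-image segment $[\phi(x),\phi(y)]$ sits inside $[\phi(a),\phi(b)]$ only up to bounded error, and one must absorb this error into the tameness constant — so the argument hinges on $T'$-tameness being uniform over all bounded-distance perturbations, which is exactly Definition~\ref{defn:tameT}. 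Once one fixes the order of applications (first Proposition~\ref{prop:qi1} to get cylinders inside images, then Lemma~\ref{lem:qi} to get images inside cylinders, then $T'$-tameness to pass between nearby $T'$-cylinders), the estimate closes and Lemma~\ref{lem:wtame} upgrades the one-sided bound to full $T$-tameness; symmetry finishes the proof.
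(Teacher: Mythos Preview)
Your overall strategy is exactly the paper's: reduce via Lemma~\ref{lem:wtame} to the one-sided bound $\mu(Cyl_{[x,y]})\le D\,\mu(Cyl_{[a,b]})$ for $[x,y]\subseteq[a,b]$ with $d_T(a,x)=d_T(y,b)=M$, then push everything through an $F$-equivariant quasi-isometry $\phi:T\to T'$ using Lemma~\ref{lem:qi} and Proposition~\ref{prop:qi1}, and finally invoke $T'$-tameness to compare nearby $T'$-cylinders. So the plan is right.

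The confusion in your write-up comes from applying the two key lemmas to the \emph{wrong} segments. You start with Proposition~\ref{prop:qi1} on the inner segment $[x,y]$ and Lemma~\ref{lem:qi} on the outer segment $[a,b]$; as you notice, this yields inequalities in the wrong direction, and you then try to repair by sandwiching both segments from both sides. That works but is redundant. The paper does it cleanly by swapping the roles: apply Lemma~\ref{lem:qi} to the \emph{inner} segment $[s,t]$ to get $\phi(Cyl_{[s,t]})\subseteq Cyl_{[s'',t'']}$ with $s'',t''$ at bounded distance from $\phi(s),\phi(t)$, hence $\mu(Cyl_{[s,t]})\le\mu(Cyl_{[s'',t'']})$; apply Proposition~\ref{prop:qi1} to the \emph{outer} segment $[a,b]$ to get $Cyl_{[q_1,q_2]}\subseteq\phi(Cyl_{[a,b]})$ with $q_i$ at bounded distance from $\phi(a),\phi(b)$, hence $\mu(Cyl_{[q_1,q_2]})\le\mu(Cyl_{[a,b]})$. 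Since $d_T(a,s)=d_T(b,t)=M$ and $\phi$ is a $(\lambda,\lambda)$-quasi-isometry, the points $s'',t''$ and $q_1,q_2$ are within a distance bounded in terms of $M,\lambda,M(\phi),M_1(\phi)$; one application of $T'$-tameness then gives $\mu(Cyl_{[s'',t'']})\le C_2\,\mu(Cyl_{[q_1,q_2]})$, and chaining the three inequalities finishes. One detail the paper makes explicit and you gloss over: $M$ must be taken large enough (at least so that $[s'',t'']\subseteq[p_1,p_2]$ where $[p_1,p_2]=[q_1,q_2]\cap[\phi(a),\phi(b)]$), but Lemma~\ref{lem:wtame} only requires the estimate for all $M\ge N$ for some $N$, so this is harmless.
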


Proposition~\ref{prop:tame} implies that the following notion is well-defined and does not depend on the choice of $T\in cv(F)$:

\begin{defn}[Tame current]
Let $\mu\in Curr(F)$. We say that $\mu$ is \emph{tame} if for some (equivalently, for any) $T\in cv(F)$ the current $\mu$ is $T$-tame.
\end{defn}

\begin{proof}
Suppose that $\mu$ is tame with respect to $T'$.

Let $x\in T$ and $x'\in T'$ be arbitrary vertices.
Let $\phi:T\to T'$ be an $F$-equivariant
$(\lambda,\lambda)$-quasi-isometry such that $\phi(x)=x'$. Let $M=M(\phi)\ge
1$ be the constant provided by Lemma~\ref{lem:qi}.

We need to prove that $\mu$ is tame with respect to $T$. By Lemma~\ref{lem:wtame} it suffices to show that the conditions of
Lemma~\ref{lem:wtame} hold for $\mu$.

Let $M_0\ge 1$ be sufficiently big (to be specified later) and suppose $s,t,a,b\in T$ are such that $[s,t]\subseteq [a,b]$
with $d_T(s,a)=d_T(t,b)=M_0$.

Let $s',t'\in [\phi(a),\phi(b)]$ be such that
\begin{gather*}
d_{T'}(\phi(s),s')=d_{T'}(\phi(s), [\phi(a),\phi(b)]) \quad \text{  and  }\\
d_{T'}(\phi(t),t')=d_{T'}(\phi(t), [\phi(a),\phi(b)]).
\end{gather*}

Since $\phi$ is a
quasi-isometry and $T,T'$ are Gromov-hyperbolic, we have
\[d_{T'}(\phi(s),s'), d_{T'}(\phi(t),t')\le C_1,\]
where $C_1=C_1(\phi)>0$ is some constant. Note that $[\phi(a),\phi(b)]\cap [\phi(s),\phi(t)]=[s',t']$.
We may assume that $M_0$ was chosen big enough so that \[d_{T'}(\phi(a),s'), d_{T'}(\phi(b),t')\ge M_1=M_1(\psi)\] where $M_1=M_1(\psi)$
is the constant provided by Proposition~\ref{prop:qi1}.

Proposition~\ref{prop:qi1} implies that there exist $p_1,p_2,q_1,q_2\in T'$ such that $[p_1,p_2]=[q_1,q_2]\cap [\phi(a),\phi(b)]$ and
such that
\[
Cyl_{[q_1,q_2]}\subseteq \phi (Cyl_{[a,b]})
\]
and such that $d_{T'}(q_1,\phi(a)), d_{T'}(q_2,\phi(b))\le M_1$. Thus
$d_{T'}(\phi(a),s'), d_{T'}(\phi(b),t')\ge M_1=M_1(\psi)$ implies that $[s',t']\subseteq [p_1,p_2]$.

Let $s'',t''\in [s',t']\subseteq [\phi(a),\phi(b)]$ be such that
$d_{T'}(s',s'')=d_{T'}(t',t'')=M$. Thus $s'',t''\in [\phi(s),\phi(t)]$ and
\[
M\le d_{T'}(\phi(s),s'')\le M+C_1,\quad   M\le d_{T'}(\phi(t),t'')\le M+C_1.
\]
Then by Lemma~\ref{lem:qi}
\[
\phi(Cyl_{[s,t]})\subseteq Cyl_{[s'',t'']}
\]
and hence
\[
\mu(Cyl_{[s,t]} )\le \mu(Cyl_{[s'',t'']}).
\]

Note that since $\phi$ is a $(\lambda,\lambda)$-quasi-isometry and
$d_{T}(a,s)=M_0$, $d_{T}(b,t)=M_0$ then \[d_{T'}(\phi(a),\phi(s)), d_{T'}(\phi(b),\phi(t))\le \lambda M_0+\lambda.\] Since $d_{T'}(\phi(s),s'), d_{T'}(\phi(t),t')\le C_1$, it follows that \[d_{T'}(\phi(a),s'), d_{T'}(\phi(b),t')\le \lambda M_0+\lambda+C_1.\] Since $d_{T'}(s',s'')=d_{T'}(t',t'')=M$, we have \[d(\phi(a),s''), d(\phi(b),t'')\le M+\lambda M_0+\lambda+C_1.\]
Since $p_1\in [\phi(a),s'']$, $p_2\in [t'',\phi(b)]$ and $d_{T'}(q_1,\phi(a)), d_{T'}(q_2,\phi(b))\le M_1$, we get
\[
d_{T'}(s'',q_1), d_{T'}(t'',q_2)\le M_1+M+\lambda M_0+\lambda+C_1.
\]

Put $M_2=M_1+M+\lambda M_0+\lambda+C_1$.
Since $\mu$ is $T'$-tame,
\[
\mu ( Cyl_{[s'',t'']})\le C_2 \mu( Cyl_{[q_1,q_2]}),
\]
where $C_2=C(M_2)$ is the $T'$-tameness constant for $\mu$ corresponding to $M_2$.

Recall that
$Cyl_{[q_1,q_2]}\subseteq \phi (Cyl_{[a,b]})$ and therefore
\[
\mu( Cyl_{[q_1,q_2]})\le \mu (Cyl_{[a,b]}).
\]
Thus we have
\begin{gather*}
\mu(Cyl_{[s,t]})\le \mu(Cyl_{[s'',t'']})\le C_2 \mu( Cyl_{[q_1,q_2]})\le\\
\le C_2 \mu (Cyl_{[a,b]}).
\end{gather*}
Hence by Lemma~\ref{lem:wtame} $\mu$ is $T$-tame, as required.

\end{proof}

\section{The geometric entropy function on the Outer Space}

\begin{thm}\label{thm:lowerbound}
Let $T\in cv(F)$ and let $\mu\in Curr(F)$ be a tame current. Then for
any $T'\in cv(F)$ we have:
\[
h_{T'}(\mu)\ge  h_T(\mu)\inf_{f\in F\setminus\{1\}}\frac{||f||_T}{||f||_{T'}}.
\]
\end{thm}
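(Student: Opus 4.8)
The plan is to transport the cylinder sets of $T'$ into $T$ along an $F$-equivariant quasi-isometry $\psi\colon T'\to T$, to feed the resulting cylinders into the exponential decay estimate built into the definition of $h_T(\mu)$, and to account for the length lost under $\psi$ using the extremal distortion $\inf_{f}||f||_T/||f||_{T'}$, which by Lemma~\ref{lem:stretch}(3) is exactly the worst asymptotic contraction factor of $\psi$ measured against $d_{T'}$. If $h_T(\mu)=0$ there is nothing to prove, so I will assume $h_T(\mu)>0$.

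First I would fix an $F$-equivariant quasi-isometry $\psi\colon T'\to T$, let $M=M(\psi)\ge 1$ be the constant produced by Lemma~\ref{lem:qi} for $\psi$, and write $\lambda_1:=\inf_{f\in F\setminus\{1\}}||f||_T/||f||_{T'}$, which is strictly positive since $\psi$ is a quasi-isometry. Given $y_1,y_2\in T'$ with $d_{T'}(y_1,y_2)$ large, I put $z_i:=\psi(y_i)\in T$. By Lemma~\ref{lem:stretch}(3) applied to $\psi$, with the roles of $T$ and $T'$ interchanged, there is a constant $C=C(\psi)$ with $d_T(z_1,z_2)\ge\lambda_1\,d_{T'}(y_1,y_2)-C$; in particular $d_T(z_1,z_2)>2M$ once $d_{T'}(y_1,y_2)$ is large enough. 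I then choose $w_1,w_2\in[z_1,z_2]\subseteq T$ with $d_T(z_i,w_i)=M$, so that
\[
d_T(w_1,w_2)=d_T(z_1,z_2)-2M\ge\lambda_1\,d_{T'}(y_1,y_2)-C-2M .
\]

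Now Lemma~\ref{lem:qi} applied to $\psi$ gives $\psi\big(Cyl_{[y_1,y_2]}\big)\subseteq Cyl_{[w_1,w_2]}$, and since under the canonical identifications $\partial^2 q_T$, $\partial^2 q_{T'}$ the map of $\partial^2 F$ induced by $\psi$ is the identity (cf.\ the proof of Corollary~\ref{cor:0}), this reads $Cyl^{T'}_{[y_1,y_2]}\subseteq Cyl^{T}_{[w_1,w_2]}$ as subsets of $\partial^2 F$; hence $\mu\big(Cyl^{T'}_{[y_1,y_2]}\big)\le\mu\big(Cyl^{T}_{[w_1,w_2]}\big)$. Next I fix any $s$ with $0\le s<h_T(\mu)$; by Definition~\ref{defn:ge} there is $R_0$ such that $\mu\big(Cyl^{T}_{[x,y]}\big)\le\exp(-s\,d_T(x,y))$ whenever $d_T(x,y)\ge R_0$. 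Since $\lambda_1>0$, the lower bound for $d_T(w_1,w_2)$ above tends to infinity as $d_{T'}(y_1,y_2)\to\infty$, so for $d_{T'}(y_1,y_2)$ large I obtain
\[
\mu\big(Cyl^{T'}_{[y_1,y_2]}\big)\le\exp\big(-s\,d_T(w_1,w_2)\big)\le\exp\big(-s(\lambda_1\,d_{T'}(y_1,y_2)-C-2M)\big).
\]
Taking $-\log$, dividing by $d_{T'}(y_1,y_2)$, and letting $d_{T'}(y_1,y_2)\to\infty$ yields $h_{T'}(\mu)\ge s\lambda_1$; since $s<h_T(\mu)$ was arbitrary, $h_{T'}(\mu)\ge h_T(\mu)\,\lambda_1=h_T(\mu)\inf_{f\in F\setminus\{1\}}||f||_T/||f||_{T'}$, as required.

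The step I expect to be the real content is the one already pre-packaged in Lemma~\ref{lem:stretch}(3): that the length loss of $\psi$, measured against $d_{T'}$, is asymptotically controlled by $\lambda_1$ \emph{uniformly over all pairs of points of $T'$}, not merely along the axes of elements of $F$; this uniformity is exactly what makes $\inf_f||f||_T/||f||_{T'}$, rather than some larger quantity, the correct constant. Everything else is bookkeeping of the additive errors $C$ and $2M$, which are swallowed in the limit. Note that this particular argument does not use the tameness of $\mu$; tameness will instead be needed for the matching upper bound for $h_{T'}(\mu)$.
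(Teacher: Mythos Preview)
Your proof is correct, and it is genuinely different from the paper's. The paper's argument picks a sequence $a_n,b_n\in T'$ realizing the $\liminf$ defining $h_{T'}(\mu)$, approximates each $[a_n,b_n]$ by a nearby segment $[x',g_nx']$ lying on the axis of some $g_n\in F$ (so that $d_{T'}(x',g_nx')=||g_n||_{T'}$), and then compares $||g_n||_T$ with $||g_n||_{T'}$ directly. Tameness is invoked twice: once to pass from $\mu(Cyl_{[a_n,b_n]})$ to $\mu(Cyl_{[x',g_nx']})$, and once more on the $T$ side after pushing through the quasi-isometry. By contrast, you invoke Lemma~\ref{lem:stretch}(3) up front, which already packages the approximation-by-axes argument into a uniform linear bound $d_T(\psi(y_1),\psi(y_2))\ge\lambda_1 d_{T'}(y_1,y_2)-C$; this lets you control $\mu$ on \emph{every} long $T'$-cylinder simultaneously and avoids moving cylinders at all, so tameness is never needed. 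Your observation in the last paragraph is therefore exactly right: the lower bound in Theorem~\ref{thm:lowerbound} holds for an arbitrary nonzero current once Lemma~\ref{lem:stretch}(3) is available, and the tameness hypothesis is used only for the matching upper bound (Proposition~\ref{prop:comp}, via Proposition~\ref{prop:KN}) that yields the equality in Theorem~\ref{thm:comp1}.
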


\begin{proof}

Put $h=h_T(\mu)$.

Let $x\in T$ and $x'\in T'$ be arbitrary vertices. Let $\phi: T'\to T$
be an $F$-equivariant quasi-isometry  such that $\phi(x')=x$. Thus
$\phi(gx')=gx$ for every $g\in F$. Let $M\ge 1$ be provided by Lemma~\ref{lem:qi}.

Let $a_n,b_n\in T'$ be such that $\lim_{n\to\infty}
d_{T'}(a_n,b_n)=\infty$ and such that
\[
h_{T'}(\mu)=\lim_{n\to\infty} \frac{-\log  \mu(Cyl_{[a_n,b_n]}) }{d_{T'}(a_n,b_n)}.
\]

Note that there exists some constant $M'=M'(T')\ge 1$ (it can be taken equal to $vol(\mathcal L')$), such that for
any finite reduced edge-path $v$ in $T'/F$ there exists a cyclically reduced
closed edge-path $\widehat v$ in $T'/F$ containing $v$ or contained in $v$ (as a subpath) and
such that $|\mathcal L'(v)-\mathcal L'(\widehat v)|\le M'$.

Then there exists $h_n,g_n\in F$ such that $d_{T'}(a_n,h_nx')\le M'$,
$d_{T'}(b_n,g_nx')\le M'$ and such that $[h_nx',g_nx']$ projects to a
closed cyclically reduced path in $T'/F$.

After translating
$[a_n,b_n]$ by $h_n^{-1}$, we may assume that $h_n=1$. Thus $[x',g_nx']$
is contained in the axis of $g_n$ and $d_{T'}(x',g_nx')=||g_n||_{T'}$.
Note that $\displaystyle\lim_{n\to\infty}d_{T'}(a_n,b_n)=\infty$ implies $\displaystyle\lim_{n\to\infty}||g_n||_{T'}=\infty$.

Since $\mu$ is tame, there exists a constant $C_1\geq 1$ such that
\[
\mu(Cyl_{[a_n,b_n]})\le C_1 \mu(Cyl_{[x',g_nx']}).
\]
Therefore
\[
\frac{-\log  \mu(Cyl_{[a_n,b_n]})}{d_{T'}(a_n,b_n)}\ge \frac{-\log
  \mu(Cyl_{[x',g_nx']})-\log C_1}{d_{T'}(x',g_nx')+2M'}.
\]
and
\[
h_{T'}(\mu)\ge \liminf_{n\to\infty} \frac{-\log
  \mu(Cyl_{[x',g_nx']})}{d_{T'}(x',g_nx')}.
\]

Note that $\phi(x')=x$ and $\phi(g_nx')=g_nx$.
Moreover, since $d_{T'}(x',g_nx')=||g_n||_{T'}$ and $\phi$ is a
quasi-isometry, there is a constant $C_2>0$ independent of $n$ such that for
every $n\ge 1$
\[
\left| d_T(x,g_nx)-||g_n||_T \right|\le C_2.
\]
Also, we have $\lim_{n\to\infty} ||g_n||_T=\infty$.
Let $[y_n,z_n]\subseteq [x,g_nx]$ be such that $d_T(x,y_n)=d_T(g_nx,z_n)=M$, where $M$ is a constant provided by Lemma~\ref{lem:qi}, with
\[
 \phi \left(Cyl_{[x',g_nx']}\right)\subseteq Cyl_{[y_n,z_n]}
\]
and hence
\[
\mu ( Cyl_{[x',g_nx']}) \le \mu( Cyl_{[y_n,z_n]})\le C_3 \mu( Cyl_{[x,g_nx]}),
\]
where the constant $C_3\geq 1$ exists and the last inequality holds since $\mu$ is tame.

Thus
\begin{gather*}
\frac{-\log \mu ( Cyl_{[x',g_nx']})}{d_{T'}(x',g_nx')}\ge
\frac{-\log \mu ( Cyl_{[x,g_nx]})-\log C_2}{d_{T'}(x',g_nx')}=\\
\frac{-\log \mu ( Cyl_{[x,g_nx]})-\log
  C_2}{d_T(x,g_nx)}\cdot \frac{d_T(x,g_nx)}{d_{T'}(x',g_nx')}=\\
\frac{-\log \mu ( Cyl_{[x,g_nx]})-\log C_2}{d_T(x,g_nx)}\cdot
\frac{d_T(x,g_nx)}{||g_n||_{T'}}\ge \\
\frac{-\log \mu ( Cyl_{[x,g_nx]})-\log C_2}{d_T(x,g_nx)}\cdot \frac{||g_n||_T-C_3}{||g_n||_{T'}}
\end{gather*}

Therefore
\begin{gather*}
h_{T'}(\mu)\ge \liminf_{n\to\infty} \frac{-\log
  \mu(Cyl_{[x',g_nx']})}{d_{T'}(x',g_nx')}\ge \\
\liminf_{n\to\infty} \frac{-\log \mu ( Cyl_{[x,g_nx]})-\log
  C_2}{d_T(x,g_nx)}\cdot \frac{||g_n||_T-C_3}{||g_n||_{T'}}\ge \\
\ge h_T(\mu)\liminf_{n\to\infty}\frac{||g_n||_T-C_3}{||g_n||_{T'}}
=h_T(\mu)\liminf_{n\to\infty}\frac{||g_n||_T}{||g_n||_{T'}}\ge \\
\ge h_T(\mu)\inf_{g\in F\setminus\{1\}} \frac{||g||_T}{||g||_{T'}},
\end{gather*}
as required.
\end{proof}

The following statement is an immediate corollary of the explicit formula for the Patterson-Sullivan current in the Outer Space context obtained by
Kapovich and Nagnibeda (\cite{KN}, see Proposition 5.3):
\begin{prop}\label{prop:KN}
Let $T\in cv(F)$ and let $h=h(T)$ be the critical exponent of $T$. Let $\mu_T\in Curr(F)$ be a Patterson-Sullivan current corresponding to $T$.
Then there exist constants $C_1>C_2>0$ such that for any distinct vertices $x$ and $y$ of $T$ we have
\[
 C_2 \exp(-h\, d_T(x,y))\le \mu_T(Cyl_{[x,y]} )\le C_1 \exp(-h\, d_T(x,y)).
\]

\end{prop}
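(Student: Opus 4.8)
The plan is to deduce the estimate from the explicit description of $\mu_T$ obtained in \cite{KN}. Write $T=(\alpha,\mathcal L)$ with $\alpha\colon F\to\pi_1(\Gamma,p)$ and $\Gamma=T/F$ a finite metric graph. Since $x$ and $y$ are vertices of $T$, the geodesic $[x,y]$ is a concatenation of full edges of $T$, hence projects to a reduced edge-path $v\in\mathcal P(\Gamma)$ with $\mathcal L(v)=d_T(x,y)$, and by $F$-invariance of $\mu_T$ we have $\mu_T(Cyl_{[x,y]})=\langle v,\mu_T\rangle_\alpha$ (Notation~\ref{not:sp}). So it suffices to produce constants $C_1>C_2>0$ depending only on $T$ with $C_2 e^{-h\mathcal L(v)}\le\langle v,\mu_T\rangle_\alpha\le C_1 e^{-h\mathcal L(v)}$ for every $v\in\mathcal P(\Gamma)$.

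Next I would invoke \cite[Proposition~5.3]{KN}, which describes $\mu_T$ via the conformal density $\{\nu_z\}_{z\in T}$ of Patterson--Sullivan measures on $\partial F=\partial T$: the current $\mu_T$ lies in the measure class of $\nu_z\times\nu_z$ for each vertex $z$, with density of the Gromov-product form $e^{h\langle\cdot|\cdot\rangle_z}$ up to bounded factors, and each $\nu_z$ satisfies the shadow lemma, so that $\nu_z(Cyl_{[z,w]}^z)$ is comparable, up to a multiplicative constant depending only on $T$, to $e^{-h\,d_T(z,w)}$ for vertices $z,w$. (As in \cite{KN}, this rests on Coornaert's estimates \cite{Coor} for quasi-conformal densities on hyperbolic groups.) Translating $[x,y]$ by an element of $F$, we may assume $x$ is one of the finitely many fixed vertices of $T$, one chosen in each $F$-orbit of vertices. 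A pair $(\zeta_1,\zeta_2)$ then lies in $Cyl_{[x,y]}$ precisely when $\zeta_2\in Cyl_{[x,y]}^x$, a set of $\nu_x$-measure $\asymp e^{-h\,d_T(x,y)}$, and $\zeta_1$ lies in the union of the shadows $Cyl_{[x,x']}^x$ over edges $[x,x']$ at $x$ different from the first edge of $[x,y]$, a set whose $\nu_x$-measure is at most the total mass of $\nu_x$ and at least a positive constant, there being only finitely many such local configurations in $\Gamma$. For such pairs the geodesic $[\zeta_1,\zeta_2]$ passes through $x$, so $\langle\zeta_1|\zeta_2\rangle_x$ vanishes and the density of $\mu_T$ with respect to $\nu_x\times\nu_x$ restricted to $Cyl_{[x,y]}$ is bounded above and below by positive constants. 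Multiplying these three contributions and taking the extreme constants over the finitely many choices of the orbit representative $x$ and of the first edge yields $C_1>C_2>0$ as required.

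The only genuinely non-formal ingredient is the uniformity of the constants, that is, that the ``$\zeta_1$-half'' measure and the Gromov-product density stay bounded away from $0$ and $\infty$ independently of $x$ and $y$; this is exactly where the finiteness of $\Gamma$ (finitely many $F$-orbits of vertices and of edges of $T$), together with the fact that each $\nu_z$ is non-atomic with full support, is used. If instead \cite[Proposition~5.3]{KN} is quoted in the form already asserting $\langle v,\mu_T\rangle_\alpha\asymp e^{-h\mathcal L(v)}$ uniformly over $v\in\mathcal P(\Gamma)$, the statement is literally the reduction of the first paragraph, which is presumably the intended ``immediate corollary.''
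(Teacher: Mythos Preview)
Your proposal is correct and matches the paper's approach: the paper's entire proof is the one-line remark that the statement ``is an immediate corollary of the explicit formula for the Patterson--Sullivan current in the Outer Space context obtained by Kapovich and Nagnibeda (\cite{KN}, see Proposition~5.3),'' which is exactly the citation you invoke. Your final paragraph in fact anticipates this, and the preceding two paragraphs supply a correct unpacking of why the cited result yields the two-sided estimate; the paper itself offers no such unpacking.
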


Together with the definitions of geometric entropy and of tameness, Proposition~\ref{prop:KN} immediately implies:

\begin{cor}\label{cor:ps}
Let $T\in cv(F)$ and let $\mu_T\in Curr(F)$ be a Patterson-Sullivan current corresponding to $T$. Let $h=h(T)$ be the critical exponent of $T$.

Then $\mu_T$ is tame and $h_T(\mu_T)=h(T)$.
\end{cor}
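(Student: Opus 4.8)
\emph{Proof proposal.} The plan is to read off both assertions directly from the two–sided estimate
\[
C_2\,e^{-h\,d_T(x,y)}\le \mu_T(Cyl_{[x,y]})\le C_1\,e^{-h\,d_T(x,y)}
\]
of Proposition~\ref{prop:KN}, the only preliminary being to free that estimate from the restriction to vertices. Let $c$ denote the length of the longest edge of $\Gamma=T/F$. Given distinct $x,y\in T$: extending $[x,y]$ past its endpoints to a segment $[x',y']$ between vertices with $d_T(x',y')\le d_T(x,y)+2c$ gives $Cyl_{[x',y']}\subseteq Cyl_{[x,y]}$, whence $\mu_T(Cyl_{[x,y]})\ge C_2 e^{-2hc}e^{-h\,d_T(x,y)}$; when $d_T(x,y)>2c$, truncating $[x,y]$ inward to a segment $[x'',y'']$ between distinct vertices with $d_T(x'',y'')\ge d_T(x,y)-2c$ gives $Cyl_{[x,y]}\subseteq Cyl_{[x'',y'']}$, whence $\mu_T(Cyl_{[x,y]})\le C_1 e^{2hc}e^{-h\,d_T(x,y)}$; and when $d_T(x,y)\le 2c$ the segment $[x,y]$ lies in a single edge, so $\mu_T(Cyl_{[x,y]})$ is one of finitely many values, bounded below by a positive constant and above by $\max_{e}\langle j(e),\mu_T\rangle_\alpha<\infty$. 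Thus there are constants $C_1'>C_2'>0$ with $C_2' e^{-h\,d_T(x,y)}\le\mu_T(Cyl_{[x,y]})\le C_1' e^{-h\,d_T(x,y)}$ for all $x\ne y$ in $T$.

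Granting this, tameness is immediate. If $a_1,b_1,a_2,b_2\in T$ satisfy $d_T(a_1,a_2)\le M$, $d_T(b_1,b_2)\le M$, $a_1\ne b_1$, $a_2\ne b_2$, then $|d_T(a_1,b_1)-d_T(a_2,b_2)|\le 2M$, so
\[
\mu_T(Cyl_{[a_1,b_1]})\le C_1' e^{-h\,d_T(a_1,b_1)}\le C_1' e^{2hM}e^{-h\,d_T(a_2,b_2)}\le \tfrac{C_1'}{C_2'}e^{2hM}\,\mu_T(Cyl_{[a_2,b_2]}),
\]
and symmetrically with the roles of the two pairs exchanged; hence $C(M)=\tfrac{C_1'}{C_2'}e^{2hM}$ meets the requirement of Definition~\ref{defn:tameT}. (Alternatively one may check the weaker hypothesis of Lemma~\ref{lem:wtame} by the same computation.) For the entropy formula, taking $-\log$ of the two–sided bound yields $h\,d_T(x,y)-\log C_1'\le -\log\mu_T(Cyl_{[x,y]})\le h\,d_T(x,y)-\log C_2'$, so $\dfrac{-\log\mu_T(Cyl_{[x,y]})}{d_T(x,y)}\to h$ as $d_T(x,y)\to\infty$; in particular the $\liminf$ defining $h_T(\mu_T)$ equals $h$, which is $h(T)$ since the critical exponent coincides with the volume entropy.

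I do not expect a genuine obstacle here: the entire argument is a short deduction from Proposition~\ref{prop:KN}, and the only bookkeeping is the passage from vertices to arbitrary points of $T$ together with the handling of segments contained in a single edge, both carried out in the first paragraph.
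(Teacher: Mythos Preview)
Your argument is correct and matches the paper's approach, which simply records that the corollary follows at once from Proposition~\ref{prop:KN} together with the definitions of tameness and geometric entropy; you have merely spelled out that deduction. One small inaccuracy: a segment with $d_T(x,y)\le 2c$ need not lie in a single edge (short edges may be crossed), but your conclusion there still holds because in fact $Cyl_{[x,y]}=Cyl_{[x',y']}$ for your vertex-extension $[x',y']$, and there are only finitely many reduced edge-paths in $\Gamma$ of bounded length.
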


\begin{prop}\label{prop:comp}
Let $T\in cv(F)$ and let $h=h(T)$ be the critical exponent of $T$. Let
$\mu_T\in Curr(F)$ be a Patterson-Sullivan current corresponding to
$T$.

Let $T'\in cv(F)$. Then for any $g\in F\setminus\{1\}$ we have
\[
h_{T'}(\mu_T)\le h\frac{||g||_T}{||g||_{T'}}
\]
and therefore
\[
h_{T'}(\mu_T)\le h\inf_{f\in F\setminus\{1\}}\frac{||f||_T}{||f||_{T'}}.
\]
\end{prop}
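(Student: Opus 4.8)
The plan is to produce, for each fixed $g\in F\setminus\{1\}$, an explicit family of cylinder sets in $T'$ running along the axis of $g$ whose $\mu_T$-measures decay no faster than $\exp(-h\,n\,||g||_T)$ while their $T'$-lengths grow like $n\,||g||_{T'}$; inserting this family into the $\liminf$ that defines $h_{T'}(\mu_T)$ will give $h_{T'}(\mu_T)\le h\,||g||_T/||g||_{T'}$, and then taking the infimum over $g$ yields the stated second inequality.

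First I would fix an $F$-equivariant quasi-isometry $\phi\colon T\to T'$ and let $M=M(\phi)\ge 1$ be the integer furnished by Lemma~\ref{lem:qi}. Pick a vertex $x\in T$ lying on the axis of $g$ in $T$ (the axis is a $g$-invariant bi-infinite union of edges of $T$, hence passes through vertices), so that for every $n\ge 1$ the points $x$ and $g^nx$ are distinct vertices of $T$ with $d_T(x,g^nx)=n\,||g||_T$. Put $x'=\phi(x)$, and for $n$ large choose $y_1^{(n)},y_2^{(n)}\in[x',g^nx']=[\phi(x),\phi(g^nx)]$ with $d_{T'}(x',y_1^{(n)})=d_{T'}(g^nx',y_2^{(n)})=M$; then $d_{T'}(y_1^{(n)},y_2^{(n)})=d_{T'}(x',g^nx')-2M$.

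The key step is to combine Lemma~\ref{lem:qi} with the lower bound of Proposition~\ref{prop:KN}. Applying Lemma~\ref{lem:qi} to $\phi$ gives $\phi\big(Cyl_{[x,g^nx]}\big)\subseteq Cyl_{[y_1^{(n)},y_2^{(n)}]}$, and since the boundary extension of $\phi$ is compatible with the identifications $\partial^2 F\cong\partial^2 T\cong\partial^2 T'$ (as noted just before Corollary~\ref{cor:0}) this inclusion holds inside $\partial^2 F$, whence
\[
\mu_T\big(Cyl_{[y_1^{(n)},y_2^{(n)}]}\big)\ \ge\ \mu_T\big(Cyl_{[x,g^nx]}\big)\ \ge\ C_2\exp\big(-h\,n\,||g||_T\big),
\]
the last inequality being Proposition~\ref{prop:KN}. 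Because $||g||_{T'}>0$ and $d_{T'}(x',g^nx')/n\to||g||_{T'}$, the pairs $(y_1^{(n)},y_2^{(n)})$ satisfy $d_{T'}(y_1^{(n)},y_2^{(n)})\to\infty$ and are therefore admissible in the $\liminf$ of Definition~\ref{defn:ge}, so
\[
h_{T'}(\mu_T)\ \le\ \lim_{n\to\infty}\frac{-\log\mu_T\big(Cyl_{[y_1^{(n)},y_2^{(n)}]}\big)}{d_{T'}(y_1^{(n)},y_2^{(n)})}\ \le\ \lim_{n\to\infty}\frac{h\,n\,||g||_T-\log C_2}{d_{T'}(x',g^nx')-2M}\ =\ \frac{h\,||g||_T}{||g||_{T'}}.
\]
Taking the infimum over $g\in F\setminus\{1\}$ gives $h_{T'}(\mu_T)\le h\inf_{f\in F\setminus\{1\}}||f||_T/||f||_{T'}$.

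The step I expect to be the crux — but which is already supplied by the earlier machinery — is the cylinder inclusion: $\mu_T$ carries usable lower bounds only on cylinders over $T$-segments, so one must transport a long $T$-cylinder sitting on the axis of $g$ into a $T'$-cylinder of comparable $T'$-length while losing only a bounded amount near the two ends, and this is exactly the bounded-cancellation content of Lemma~\ref{lem:qi}. Everything else is bookkeeping of the additive errors (the end-loss $2M$ and the gap between $d_{T'}(x',g^nx')$ and $n\,||g||_{T'}$), all of size $o(n)$ and hence invisible in the ratio; no ideas beyond the cited results are needed.
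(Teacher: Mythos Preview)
Your proof is correct and follows essentially the same approach as the paper's: fix an $F$-equivariant quasi-isometry $\phi:T\to T'$, apply Lemma~\ref{lem:qi} to transport the cylinder $Cyl_{[x,g^nx]}$ into a $T'$-cylinder $Cyl_{[y_1^{(n)},y_2^{(n)}]}$ losing only a bounded amount at each end, use the lower bound from Proposition~\ref{prop:KN}, and pass to the limit. The only cosmetic difference is that you place $x$ on the axis of $g$ (giving $d_T(x,g^nx)=n\,||g||_T$ exactly), whereas the paper takes an arbitrary vertex and uses $d_T(x,g^nx)/n\to||g||_T$; this changes nothing of substance.
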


\begin{proof}
By Proposition~\ref{prop:KN} there exists $C>0$ such that for any distinct vertices $x,y\in T$ we have
\[
\mu(Cyl_{[x,y]})\ge C \exp( -h d_T(x,y)).
\]

Note that for any $x\in T, x'\in T'$ we have
\begin{gather*}
||g||_T=\lim_{n\to\infty} \frac{d_T(x,g^nx)}{n}, \quad ||g||_{T'}=\lim_{n\to\infty} \frac{d_{T'}(x',g^nx')}{n}
\end{gather*}
and hence
\[
\lim_{n\to\infty} \frac{d_T(x,g^nx)}{d_{T'}(x',g^nx')}=\frac{||g||_T}{||g||_{T'}}.
\]

Let $x\in T$ and $x'\in T'$ be arbitrary vertices. Let $\phi: T\to T'$ be an $F$-equivariant quasi-isometry  such that $\phi(x)=x'$. Thus $\phi(g^nx)=g^n x'$ for every $n\in \mathbb Z$. Let $M\ge 1$ be provided by Lemma~\ref{lem:qi}.
For $n\to\infty$ let $[y_n,z_n]\subseteq [x',g^n x']$ be such that $d(x',y_n)=d(z_n, g^nx')=M$. Then by Lemma~\ref{lem:qi} we have
\[
\phi(Cyl_{[x,g^nx]})\subseteq Cyl_{[y_n,z_n]}.
\]
Hence
\[
\mu(Cyl_{[y_n,z_n]})\ge \mu(Cyl_{[x,g^nx]})\ge C \exp(-h d_T(x,g^nx))
\]
and so
\[
\frac{-\log \big( \mu(Cyl_{[y_n,z_n]}) \big)}{d_{T'}(y_n,z_n)}\le \frac{h d_T(x,g^nx) -\log C}{d_{T'}(y_n,z_n)}\le
\frac{h d_T(x,g^nx) -\log C}{d_{T'}(x',g^nx')-2M}.
\]
Hence
\[
h_{T'}(\mu)\le \liminf_{n\to\infty} \frac{h d_T(x,g^nx) -\log C}{d_{T'}(x',g^nx')-2M}=h\frac{||g||_T}{||g||_{T'}},
\]
as required.
\end{proof}

Since Patterson-Sullivan currents are tame,
Proposition~\ref{prop:comp} and Theorem~\ref{thm:lowerbound} imply

\begin{thm}\label{thm:comp1}
Let $T\in cv(F)$ and let $h=h(T)$ be the critical exponent of $T$. Let $\mu_T\in Curr(F)$ be a Patterson-Sullivan current for $T$.
Let $T'\in cv(F)$. Then
\[
h_{T'}(\mu_T)= h(T)\inf_{f\in F\setminus\{1\}}\frac{||f||_T}{||f||_{T'}}.
\]
\end{thm}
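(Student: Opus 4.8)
The plan is to obtain the equality by squeezing $h_{T'}(\mu_T)$ between two matching bounds, both of which are already available from the results assembled above. Throughout, write $h=h(T)$ for the volume entropy (critical exponent) of $T$.

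First I would invoke Corollary~\ref{cor:ps}, which records two facts at once: the Patterson--Sullivan current $\mu_T$ is tame, and $h_T(\mu_T)=h(T)$. The tameness is precisely the hypothesis that unlocks Theorem~\ref{thm:lowerbound}; without it the lower-bound argument there (which compares the cylinder weight along an arbitrary long segment in $T'$ with the weight along a nearby closed geodesic, i.e. an axis, and then transports the estimate through an $F$-equivariant quasi-isometry) would not apply. Applying Theorem~\ref{thm:lowerbound} with $\mu=\mu_T$ gives
\[
h_{T'}(\mu_T)\ge h_T(\mu_T)\inf_{f\in F\setminus\{1\}}\frac{||f||_T}{||f||_{T'}}=h(T)\inf_{f\in F\setminus\{1\}}\frac{||f||_T}{||f||_{T'}}.
\]

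For the reverse inequality I would appeal to Proposition~\ref{prop:comp}, which uses only the \emph{lower} Patterson--Sullivan estimate $\mu_T(Cyl_{[x,y]})\ge C\exp(-h\,d_T(x,y))$ from Proposition~\ref{prop:KN}, together with Lemma~\ref{lem:qi}: pushing forward the cylinders $Cyl_{[x,g^nx]}$ along an $F$-equivariant quasi-isometry $\phi\colon T\to T'$, and comparing $d_T(x,g^nx)$ with $d_{T'}(x',g^nx')=||g^n||_{T'}$ along the axis of $g$, yields $h_{T'}(\mu_T)\le h\,\frac{||g||_T}{||g||_{T'}}$ for every $g\in F\setminus\{1\}$, hence
\[
h_{T'}(\mu_T)\le h(T)\inf_{f\in F\setminus\{1\}}\frac{||f||_T}{||f||_{T'}}.
\]
Combining the two displays gives the asserted identity.

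The honest remark is that there is no substantial obstacle left at this point: the work has already been spent on (i) proving that tameness is independent of the chosen tree and that Patterson--Sullivan currents are tame, (ii) the general lower bound of Theorem~\ref{thm:lowerbound} for tame currents, whose proof is the technical heart (translating a long geodesic onto a nearby axis, controlling bounded cancellation via Lemma~\ref{lem:qi}, and passing to the $\liminf$), and (iii) the sharp two-sided Patterson--Sullivan estimates of Proposition~\ref{prop:KN}. What remains here is bookkeeping; the only point I would take care to verify is that the infimum appearing in Theorem~\ref{thm:lowerbound} and the one appearing in Proposition~\ref{prop:comp} are the same quantity $\inf_{f\in F\setminus\{1\}}||f||_T/||f||_{T'}$, so that the lower and upper bounds genuinely coincide rather than merely bracket the value.
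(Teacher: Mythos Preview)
Your proposal is correct and matches the paper's own proof essentially verbatim: the paper simply notes that since Patterson--Sullivan currents are tame (Corollary~\ref{cor:ps}), Theorem~\ref{thm:lowerbound} and Proposition~\ref{prop:comp} together give the two matching inequalities. Your additional commentary on where the real work was done and the check that both bounds involve the same infimum is accurate and adds nothing misleading.
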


\begin{cor}\label{cor:tw}
Let $T\in CV(F)$ and let $h=h(T)$ be the critical exponent of $T$. Let $\mu_T\in Curr(F)$ be a Patterson-Sullivan current for $T$.
Let $T'\in CV(F)$ be such that $T'\ne T$. Then
\[
h_{T'}(\mu_T)<h_T(\mu_T)=h(T).
\]
Thus
\[
h(T)=h_T(\mu_T)=\max_{T'\in CV(T)} h_{T'}(\mu_T)
\]
and this maximum is strict.
\end{cor}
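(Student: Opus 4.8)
The plan is to reduce everything to two facts already established in the excerpt --- Corollary~\ref{cor:ps}, which gives $h_T(\mu_T)=h(T)$, and Theorem~\ref{thm:comp1}, which gives $h_{T'}(\mu_T)=h(T)\,\lambda(T,T')$ for every $T'\in cv(F)$, where I write $\lambda(T,T'):=\inf_{f\in F\setminus\{1\}}\frac{||f||_T}{||f||_{T'}}$ --- and then analyze $\lambda(T,T')$ for $T,T'\in CV(F)$. Since $h(T)>0$, the whole statement amounts to the claim that $\lambda(T,T')\le 1$ for every $T'\in CV(F)$, with equality precisely when $T'=T$.

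First I would note that $\lambda(T,T)=1$ trivially. For $T'\ne T$, observe that $\lambda(T,T')\ge 1$ is equivalent to $||f||_{T'}\le||f||_T$ for every $f\in F$, i.e.\ to $\sup_{f\ne 1}\frac{||f||_{T'}}{||f||_T}\le 1$. This is exactly what White's theorem~\cite{Wh} rules out for distinct points of $CV(F)$: if $T,T'\in CV(F)$ with $T\ne T'$, then there exist $f,g\in F\setminus\{1\}$ with $||g||_T<||g||_{T'}$ and $||f||_T>||f||_{T'}$; in particular $\lambda(T,T')\le\frac{||g||_T}{||g||_{T'}}<1$. Combining this with Corollary~\ref{cor:ps} and Theorem~\ref{thm:comp1}: for $T'\ne T$ in $CV(F)$ we get $h_{T'}(\mu_T)=h(T)\lambda(T,T')<h(T)=h_T(\mu_T)$, while for \emph{every} $T'\in CV(F)$ we get $h_{T'}(\mu_T)=h(T)\lambda(T,T')\le h(T)$. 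Hence $h(T)=h_T(\mu_T)=\max_{T'\in CV(F)}h_{T'}(\mu_T)$ and the maximum is attained only at $T'=T$, which is the assertion.

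If one prefers to avoid quoting White, the same conclusion follows from the sharp form of Lemma~\ref{lem:stretch}: by White/Francaviglia--Martino~\cite{Wh,FM} the stretch factor $\sup_{f\ne 1}\frac{||f||_{T'}}{||f||_T}$ equals the minimal Lipschitz constant of an $F$-equivariant map $g\colon T\to T'$. Were this factor $\le 1$, such a $g$ would be $1$-Lipschitz; its image is an $F$-invariant subtree of $T'$, hence all of $T'$ by minimality of the action, so $g$ is onto. Taking $g$ linear on edges and comparing the $\mathcal L$-volume $1$ of a fundamental domain $D\subseteq T$ with the length of $g(D)$ --- which is $\le 1$ and whose $F$-translates cover $T'$, forcing $\mathrm{vol}(T'/F)=1\le\mathrm{length}(g(D))\le 1$ --- one finds equality throughout, so $g$ is edge-isometric, non-folding and onto, hence an isometry, whence $T=T'$ in $CV(F)$, a contradiction.

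The only genuine content is therefore this non-degeneracy (strictness) input --- that two distinct volume-one trees cannot have their entire marked length spectra ordered the same way --- which is precisely White's theorem, or the volume--minimality argument sketched above; the remainder is bookkeeping with Corollary~\ref{cor:ps} and Theorem~\ref{thm:comp1}. I expect the actual write-up to simply invoke \cite{Wh}.
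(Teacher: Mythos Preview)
Your proposal is correct and matches the paper's own proof essentially verbatim: the paper invokes White's theorem~\cite{Wh} to produce $g\in F\setminus\{1\}$ with $||g||_T<||g||_{T'}$, and then applies Theorem~\ref{thm:comp1} to conclude $h_{T'}(\mu_T)\le h(T)\frac{||g||_T}{||g||_{T'}}<h(T)$. Your additional Lipschitz/volume argument avoiding~\cite{Wh} is a nice self-contained alternative that the paper does not include.
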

\begin{proof}
A result of Tad White~\cite{Wh} implies that, when $T'\ne T$, there exists a nontrivial $g\in F$ such that $||g||_T<||g||_{T'}$.
Therefore by Theorem~\ref{thm:comp1}
\[
h_{T'}(\mu_T)= h(T)\frac{||g||_T}{||g||_{T'}}<h(T) .
\]
\end{proof}

\begin{cor}\label{cor:inf}
Let $T\in CV(F)$ and let $\mu_T\in Curr(F)$ be a Patterson-Sullivan current for $T$.
Then
\[
\inf_{T'\in CV(F)} h_{T'}(\mu_T)=0.
\]
\end{cor}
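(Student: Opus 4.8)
The plan is to deduce the statement directly from Theorem~\ref{thm:comp1}. By that theorem, for every $T'\in CV(F)$ we have
\[
h_{T'}(\mu_T)=h(T)\inf_{f\in F\setminus\{1\}}\frac{||f||_T}{||f||_{T'}}\ \ge\ 0 .
\]
Consequently it suffices to produce a sequence $T_n'\in CV(F)$ together with a single nontrivial element $f\in F$ such that $||f||_T$ stays bounded while $||f||_{T_n'}\to\infty$; then $h_{T_n'}(\mu_T)\le h(T)\,||f||_T/||f||_{T_n'}\to 0$, which combined with $h_{T'}(\mu_T)\ge 0$ forces $\inf_{T'\in CV(F)}h_{T'}(\mu_T)=0$.

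To construct the $T_n'$, fix a free basis $a_1,\dots,a_k$ of $F$, let $R_k$ be the rose with petals $x_1,\dots,x_k$, and fix the identification $\pi_1(R_k)=\langle x_1,\dots,x_k\rangle$ with the chart $\iota\colon F\to\pi_1(R_k,p)$, $\iota(a_i)=x_i$. Let $T_0\in CV(F)$ be this rose with the metric giving each petal length $1/k$, and let $\tau\in Out(F)$ be the Dehn twist $a_2\mapsto a_2a_1$, $a_i\mapsto a_i$ for $i\ne 2$. Set $T_n':=\tau^n\cdot T_0$; since $CV(F)$ is $Out(F)$-invariant, $T_n'\in CV(F)$. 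Concretely, $T_n'$ is the rose with petals of length $1/k$ and marking $\beta_n=\iota\circ\tau^{-n}$, so that $\beta_n(a_1)=x_1$, $\beta_n(a_2)=x_2x_1^{-n}$, $\beta_n(a_i)=x_i$ for $i\ge 3$.

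The only computation needed is $||a_2||_{T_n'}$. The element $a_2\in F$ acts on $T_n'=\widetilde{R_k}$ as the deck transformation labelled by $\beta_n(a_2)=x_2x_1^{-n}$, whose translation length in $\widetilde{R_k}$ equals $\tfrac1k$ times the cyclically reduced word length of $x_2x_1^{-n}$ in $\langle x_1,\dots,x_k\rangle$. As $x_2x_1^{-n}$ is cyclically reduced of length $n+1$, we get $||a_2||_{T_n'}=(n+1)/k$, whereas $||a_2||_T$ is a fixed positive real number. Theorem~\ref{thm:comp1} then gives
\[
h_{T_n'}(\mu_T)=h(T)\inf_{f\in F\setminus\{1\}}\frac{||f||_T}{||f||_{T_n'}}\ \le\ h(T)\,\frac{||a_2||_T}{||a_2||_{T_n'}}\ =\ \frac{k\,h(T)\,||a_2||_T}{n+1}\ \longrightarrow\ 0
\]
as $n\to\infty$, and since $h_{T'}(\mu_T)\ge 0$ for all $T'\in CV(F)$, this yields $\inf_{T'\in CV(F)}h_{T'}(\mu_T)=0$.

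I do not expect a genuine obstacle here: all the substance is already packaged in Theorem~\ref{thm:comp1}, and the only point requiring attention is that inside a single chart of $CV(F)$ one cannot lengthen a fixed loop (the co-volume is pinned to $1$), so the family must move the marking — twisting it so that a fixed element of $F$ is realized by ever longer loops in a unit-volume rose. Phrased invariantly, the argument merely records that the stretching distortion $\sup_{f\ne 1}||f||_{T'}/||f||_T$ is unbounded as $T'$ ranges over $CV(F)$, which is a reflection of the non-compactness of the Outer space.
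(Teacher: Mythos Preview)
Your proof is correct and follows essentially the same approach as the paper: both apply Theorem~\ref{thm:comp1} and exhibit a sequence $T_n'\in CV(F)$ obtained by acting on a fixed unit-volume rose by outer automorphisms so that the translation length of a fixed basis element tends to infinity. The only difference is cosmetic—the paper invokes an unspecified sequence $\phi_n\in Out(F)$ with $||\phi_n a||_A\to\infty$, while you make the explicit choice $\phi_n=\tau^{-n}$ for a Dehn twist $\tau$ and carry out the length computation directly.
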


\begin{proof}
Recall that $F$ is a free group of rank $k\ge 2$.
Let $A$ be a free basis of $F$ and let $T_A$ be the Cayley graph of $F$ with respect to $A$, where every edge has length $1/k$. Thus $T_A\in CV(F)$.
Let $a\in A$.
There exists a sequence $\phi_n\in Out(F)$ such that $\lim_{n\to\infty} ||\phi_n a||_A=\infty$ and hence
$\lim_{n\to\infty} ||\phi_n a||_{T_A}=\frac{1}{k}\lim_{n\to\infty} ||\phi_n a||_A=\infty$. Put $T_n=\phi_n^{-1} T_A$. Thus $T_n\in CV(F)$ and
\[
||a||_{T_n}=||a||_{\phi_n^{-1} T_A}=||\phi_n a||_{T_A}\longrightarrow_{n\to\infty}\infty.
\]
Therefore by Theorem~\ref{thm:comp1} we have
\[
h_{T_n}(\mu)\le h(T)\frac{||a||_T}{||a||_{T_n}}\longrightarrow_{n\to\infty} 0.
\]
Hence
\[
\inf_{T'\in CV(F)} h_{T'}(\mu)=0,
\]
as required.

\end{proof}

\section{The maximal geometric entropy problem for a fixed tree}\label{sect:hd}

Recall that, as observed in Introduction, the function
$h_T(\cdot): Curr(F)\to \mathbb R, \mu\mapsto h_T(\mu)$ is not
continuous. Nevertheless, it turns out that it is possible to
find the maximal value of $h_T(\cdot)$ on $Curr(F)-\{0\}$.

Recall that if $(X,d)$ is a metric space and $\nu$ is a positive measure on $X$, then the \emph{Hausdorff dimension ${\mathbf {HD}} _X(\nu)$ of $\nu$ with
respect to $X$} is defined as
\[
{\mathbf {HD}} _X(\nu):=\inf\{ {\mathbf {HD}} (S): S\subseteq X\text{ such that } \nu(X-S)=0\}.
\]
Thus ${\mathbf {HD}} _X(\nu)$ is the smallest Hausdorff dimension of a subset of $X$ of full $\nu$-measure.
Note that this obviously implies that ${\mathbf {HD}} _X(\nu)\le {\mathbf {HD}} (X)$.

Let $T\in cv(F)$.
Recall that if $x\in T$, $\xi,\zeta\in \partial T$, we denote by $(\xi|\zeta)_x$ the distance $d_T(x,y)$ where $[x,\xi]\cap
[x,\zeta]=[x,y]$.
Let $x\in T$ be a base-point. The boundary $\partial T$ is metrized by setting $d_x(\xi,\zeta)=\exp(- (\xi|\zeta)_x )$
for $\xi,\zeta\in \partial T$.
It is well-known (see \cite{Coor}) that ${\mathbf {HD}} (\partial T, d_x)=h(T)$.

Recall that, as explained in the Introduction, given a current $\mu\in
Curr(F)$ and $T\in cv(F)$, we introduce a family of measures $\{\mu_x\}_{x\in T}$ on $\partial F$
defined by their values on all the one-sided cylinder subsets of
$\partial F$:
\[
Cyl_{[x,y]}^x := \{\xi\in \partial F:  \text{ the geodesic ray }
  [x,\partial_T(\xi)] \text{ in $T$ begins with } [x,y]\}\subseteq \partial F,
\]
\[\mu_x (Cyl_{[x,y]}^x) := \mu (Cyl_{[x,y]}) .\]
It is not hard to see that if $\mu\ne 0$ then there exists $x\in T$
such that $\mu_x\ne 0$.

\begin{thm}\label{thm:vol}
Let $\mu\in Curr(F)$, $\mu\ne 0$, let $T\in cv(F)$, and let $x\in T$ be such that $\mu_x\ne 0$.
Then
\[
h_T(\mu)\le {\mathbf {HD}} _{\partial T}(\mu_x)\le h(T).
\]
\end{thm}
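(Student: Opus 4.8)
The plan is to dispatch the two inequalities separately. The right-hand inequality $\mathbf{HD}_{\partial T}(\mu_x)\le h(T)$ is immediate from what has already been recalled: for any positive measure $\nu$ on a metric space $X$ one has $\mathbf{HD}_X(\nu)\le \mathbf{HD}(X)$, and here $\mathbf{HD}(\partial T,d_x)=h(T)$. So the substance is the inequality $h_T(\mu)\le \mathbf{HD}_{\partial T}(\mu_x)$, and the strategy for it is to invoke the mass distribution principle (Billingsley's lemma, see e.g.\ Falconer or Mattila): if $\nu$ is a nonzero locally finite Borel measure on a metric space $X$ and $\liminf_{r\to 0^+}\log\nu(B(\omega,r))/\log r\ge s$ for $\nu$-almost every $\omega$, then $\mathbf{HD}_X(\nu)\ge s$. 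I would apply this with $X=(\partial T,d_x)$, $\nu=\mu_x$ and $s=h_T(\mu)$, so that the task reduces to bounding the lower pointwise dimension of $\mu_x$ below by $h_T(\mu)$ at $\mu_x$-a.e.\ point. (If one prefers to work with a finite measure, one may first replace $\mu_x$ by its restriction to a single one-sided cylinder $Cyl_{[x,y_0]}^x$ of positive mass --- which exists since $\mu_x\ne0$ --- as this only decreases $\mathbf{HD}$ and does not affect the local computation at points of its support.)

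The geometric heart of the argument is the comparison between $d_x$-balls and one-sided cylinders. Fix $\xi$ in the support of $\mu_x$ and, for $t>0$, let $z_t$ be the point of the ray $[x,\partial_T(\xi)]$ with $d_T(x,z_t)=t$. Since $d_x(\eta,\zeta)=\exp(-(\eta|\zeta)_x)$ and $T$ is an $\mathbb R$-tree, one checks directly that $\zeta\in B_{d_x}(\xi,e^{-t})$ forces $(\xi|\zeta)_x>t$, hence that $[x,\partial_T(\zeta)]$ begins with $[x,z_t]$; thus $B_{d_x}(\xi,e^{-t})\subseteq Cyl_{[x,z_t]}^x$ for every $t>0$. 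Note that $z_t$ may be an interior point of an edge of $T$, but the cylinder $Cyl_{[x,z_t]}^x$ and the quantity $\mu(Cyl_{[x,z_t]})=\mu_x(Cyl_{[x,z_t]}^x)$ are nonetheless well defined, finite (the corresponding two-sided cylinder is a compact subset of $\partial^2F$), positive (as $Cyl_{[x,z_t]}^x$ is a clopen neighbourhood of $\xi\in\operatorname{supp}\mu_x$), and non-increasing in $t$. Writing $r=e^{-t}$, using $\log r<0$ together with $d_T(x,z_t)=t=-\log r$, the inclusion yields
\[
\frac{\log\mu_x\big(B_{d_x}(\xi,r)\big)}{\log r}\ \ge\ \frac{\log\mu\big(Cyl_{[x,z_t]}\big)}{-t}\ =\ \frac{-\log\mu\big(Cyl_{[x,z_t]}\big)}{d_T(x,z_t)}.
\]

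Letting $r\to0^+$, equivalently $t\to\infty$, the pairs $(x,z_t)$ satisfy $d_T(x,z_t)\to\infty$, so the $\liminf$ of the right-hand side is at least the $\liminf$ over all pairs appearing in Definition~\ref{defn:ge}; that is,
\[
\liminf_{r\to0^+}\frac{\log\mu_x\big(B_{d_x}(\xi,r)\big)}{\log r}\ \ge\ \liminf_{t\to\infty}\frac{-\log\mu\big(Cyl_{[x,z_t]}\big)}{d_T(x,z_t)}\ \ge\ h_T(\mu).
\]
Since this holds for every $\xi\in\operatorname{supp}\mu_x$, hence for $\mu_x$-a.e.\ $\xi\in\partial T$, the mass distribution principle gives $\mathbf{HD}_{\partial T}(\mu_x)\ge h_T(\mu)$, which together with the first paragraph completes the proof. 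The only genuinely external input is the mass distribution principle; the rest is elementary tree bookkeeping. The step most likely to require care is the ball--cylinder inclusion together with the verification that the borderline case of $z_t$ landing inside an edge causes no trouble, and making sure the a.e.\ form of the mass distribution principle is being applied correctly to the possibly non-finite measure $\mu_x$ (handled cleanly by the cylinder-restriction remark above); I do not anticipate a serious obstacle.
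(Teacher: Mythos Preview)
Your argument is correct and follows essentially the same route as the paper. The paper quotes Kaimanovich's formula $\mathbf{HD}_{\partial T}(\nu)=\operatorname{ess\,sup}_{\xi}\liminf_{k\to\infty}\frac{-\log\nu(B_x(\xi,k))}{k}$ and then runs the same chain of inequalities you do (restricting the $\liminf$ defining $h_T(\mu)$ first to segments emanating from $x$, then to those along a single ray), whereas you invoke only the one-sided mass distribution principle; since that is exactly the direction of Kaimanovich's formula needed here, the two arguments are the same in substance. Your caveat about $\mu_x$ possibly being infinite is unnecessary, incidentally: $T$ is locally finite, so $\mu_x(\partial T)$ is a finite sum of cylinder weights and hence finite.
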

\begin{proof}
As observed by Kaimanovich in \cite{Kaim98} (see formula (1.3.3)), the following formula holds for the Hausdorff dimension of a measure $\nu$ on $\partial T$ endowed with the metric $d_x$ as above. 
\[
{\mathbf {HD}} _{\partial T}(\nu)={\rm ess\ sup}_{\xi\in \partial T} \liminf_{k\to\infty} \frac{-\log \nu
\left(B_x(\xi,k)\right)}{k}\tag{$\ddag$}
\]

Here $B_x(\xi,k)$ is the set of all $\zeta\in \partial T$ such that
$(\xi|\zeta)_x\ge k$, that is $B_x(\xi,k)=Cyl_{[x,y_k]}^x$
where $[x,y_k]$ is the initial segment of $[x,\xi]$ of length $k$.
The essential supremum in $(\ddag)$ is taken with respect to $\nu$.

Applied to $\mu_x$, formula $(\ddag)$ yields:

\begin{gather*}
h_T(\mu)=\liminf_{d_T(y,z)\to\infty}\frac{-\log \mu (Cyl_{[y,z]})}{d_T(y,z)}\le \\
\le \liminf_{\overset{z\in T}{d_T(x,z)\to\infty}}\frac{-\log \mu (Cyl_{[x,z]})}{d_T(x,z)}\le \\
\le {\rm ess\ sup}_{\xi\in \partial T} \liminf_{\overset{z\in T}{d_T(x,z)\to\infty}} \frac{-\log \mu_x(Cyl_{[x,z]}^x)}{d_T(x,z)}=\\
={\mathbf {HD}} _{\partial T} (\mu_x)\le {\mathbf {HD}} (\partial T)=h(T).
\end{gather*}

\end{proof}

\begin{prop}\label{prop:proj}
Let $T,T'\in cv(F)$ be such that $h:=h(T)=h(T')$. Let $\mu_T$ be a Patterson-Sullivan current corresponding to $T$ and suppose that
$h_{T'}(\mu_T)=h$.
Then $T$ and $T'$ are in the same projective class.
\end{prop}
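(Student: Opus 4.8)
The plan is the following. Via Theorem~\ref{thm:comp1} the entropy hypothesis translates into a metric domination between $T'$ and $T$; using Proposition~\ref{prop:KN} together with the quasi-isometry comparison lemmas of Section~\ref{sect:ge} this domination is promoted to a domination of the associated Patterson--Sullivan currents, now viewed as measures on $\partial^2 F$; ergodicity of a Patterson--Sullivan current then upgrades domination to proportionality; and finally injectivity of the Patterson--Sullivan map identifies the projective classes. For the first step, note that since $F$ has rank $\ge 2$ we have $h=h(T)=h(T')>0$. Feeding the hypothesis $h_{T'}(\mu_T)=h$ into the formula of Theorem~\ref{thm:comp1} gives $h=h\cdot\inf_{g\ne 1}\frac{||g||_T}{||g||_{T'}}$, hence $\inf_{g\ne 1}\frac{||g||_T}{||g||_{T'}}=1$, i.e.\ $||g||_{T'}\le||g||_T$ for every $g\in F\setminus\{1\}$.

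For the second step, fix an $F$-equivariant quasi-isometry $\phi\colon T'\to T$. By Lemma~\ref{lem:stretch}(3), the equality $\inf_g\frac{||g||_T}{||g||_{T'}}=1$ provides a constant $C\ge 0$ with $d_T(\phi x',\phi y')\ge d_{T'}(x',y')-C$ for all $x',y'\in T'$. Combining this with Lemma~\ref{lem:qi} (applied to $\phi$) and the upper bound of Proposition~\ref{prop:KN} for $T$ — and moving endpoints to nearby vertices of $T$ where necessary, which costs only bounded, hence multiplicatively controlled, errors — one obtains a constant $C_1>0$ such that for every segment $[x',y']$ of $T'$
\[
\mu_T\big(Cyl^{T'}_{[x',y']}\big)\le C_1\,e^{-h\,d_{T'}(x',y')},
\]
where $Cyl^{T'}_{[x',y']}\subseteq\partial^2 F$ is the two-sided cylinder determined by $[x',y']$ in $T'$. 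On the other hand, the lower bound of Proposition~\ref{prop:KN} for $T'$ gives $C_2>0$ with $\mu_{T'}(Cyl^{T'}_{[x',y']})\ge C_2\,e^{-h\,d_{T'}(x',y')}$. Hence $\mu_T(Cyl^{T'}_{[x',y']})\le\frac{C_1}{C_2}\mu_{T'}(Cyl^{T'}_{[x',y']})$ for every two-sided cylinder. Using the combinatorial structure of cylinder sets — two-sided cylinders are compact open, each product of one-sided cylinders is a countable disjoint union of two-sided cylinders, and every open subset of $\partial^2 F$ is a countable disjoint union of such products — together with outer regularity of geodesic currents, this comparison on cylinders promotes to $\mu_T\le\frac{C_1}{C_2}\mu_{T'}$ as Borel measures on $\partial^2 F$; in particular $\mu_T\ll\mu_{T'}$.

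For the third step, recall that the $F$-action on $(\partial^2 F,\mu_{T'})$ is conservative and ergodic — equivalently, the geodesic flow of $T'$ is ergodic for its Bowen--Margulis--Sullivan measure — because $F$ acts cocompactly on $T'$, so the Poincar\'e series diverges at the critical exponent (cf.\ \cite{Coor,Fur}). Since $\mu_T$ and $\mu_{T'}$ are both $F$-invariant, the Radon--Nikodym derivative $d\mu_T/d\mu_{T'}$ is $F$-invariant $\mu_{T'}$-a.e., hence $\mu_{T'}$-a.e.\ equal to a constant; as $\mu_T\ne 0$ this constant is some $c>0$, so $\mu_T=c\,\mu_{T'}$ and $[\mu_T]=[\mu_{T'}]$ in $\mathbb P Curr(F)$. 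Since the projective Patterson--Sullivan current of a tree in $cv(F)$ depends only on its projective class and the Patterson--Sullivan map $CV(F)\to\mathbb P Curr(F)$ is an embedding \cite{KN}, it follows that $[T]=[T']$, as required. (Combined with Step~1 this in fact forces $T=T'$ in $cv(F)$, but only the projective statement is needed.)

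I expect the main obstacle to be Step~2: one must carry out carefully the bookkeeping of the additive constants coming from the bounded cancellation and quasi-isometry lemmas and from the passage to nearby vertices, and — more delicately — justify that an inequality holding on all two-sided cylinders promotes to an inequality of Borel measures. The latter genuinely uses the "tree-like" product structure of cylinder sets in $\partial^2 F$: a naive $\pi$-system argument does not suffice, since $\mu\le\nu$ on a generating $\pi$-system of a $\sigma$-algebra need not imply $\mu\le\nu$ on the whole $\sigma$-algebra. The other point requiring care, although standard, is the appeal to ergodicity of the Patterson--Sullivan current in Step~3, which is precisely what converts the domination $\mu_T\le\frac{C_1}{C_2}\mu_{T'}$ into proportionality. (An alternative to Steps~2--3, valid at least when $T,T'$ have simplicial metrics, would be to run a thermodynamic-formalism argument: $||g||_{T'}\le||g||_T$ with a strict inequality somewhere forces, via strict monotonicity of pressure and Bowen's formula for the critical exponent, $h(T)<h(T')$, contradicting $h(T)=h(T')$; but the measure-theoretic route above avoids the technical difficulties with non-simplicial metrics flagged in connection with Theorem~\ref{E}.)
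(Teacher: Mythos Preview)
Your proof is correct and follows the same overall strategy as the paper: deduce a translation-length domination $||g||_{T'}\le||g||_T$, use it together with Proposition~\ref{prop:KN} and the quasi-isometry lemmas to bound $\mu_T$ by a constant times $\mu_{T'}$ on two-sided cylinders in $T'$, and finish with a rigidity statement. Two differences are worth noting. First, you obtain the length-function inequality by invoking Theorem~\ref{thm:comp1}, which packages exactly this computation; the paper instead re-derives it directly, combining the lower bound in Proposition~\ref{prop:KN} with the decay hypothesis $h_{T'}(\mu_T)=h$ and an $\epsilon$-argument. Your route is shorter but relies on an earlier theorem; the paper's is self-contained at this point. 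Second, for the rigidity step the paper simply cites Furman~\cite{Fur} to pass from absolute continuity $\mu_T\ll\mu_{T'}$ to proportionality of the length functions, whereas you spell out the mechanism (ergodicity of the $F$-action on $(\partial^2 F,\mu_{T'})$ forces the Radon--Nikodym derivative to be constant) and then conclude via injectivity of the Patterson--Sullivan map from~\cite{KN}. These are equivalent unpackings of the same result. Your discussion of why the cylinder inequality promotes to a genuine inequality of Borel measures is more careful than the paper's, which asserts absolute continuity without further comment; your argument via disjoint decompositions into products of one-sided cylinders from a fixed basepoint is correct.
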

\begin{proof}

Let $\mu_{T'}\in Curr(F)$ be a Patterson-Sullivan current corresponding to $T'$.
We will first show that $\mu_T$ is absolutely continuous with respect to $\mu_{T'}$.
Let  $\phi:T\to T'$ be an $F$-equivariant $(\lambda,\lambda)$-quasi-isometry, where $\lambda\ge 1$.
By Proposition~\ref{prop:KN} there is a constant $C\ge 1$ such that for any $x,y\in T$ with
$d_T(x,y)\ge 1$
\[
\frac{1}{C} \exp( -h d_T(x,y))\le \mu_T Cyl_{[x,y]} \le C \exp( -h d_T(x,y)).
\]

Let $x',y'\in \phi(T')$ be arbitrary such that $d_{T'}(x',y')\ge \lambda^2+\lambda$.
Let $x,y\in T$ be such that $x'=\phi(x)$, $y'=\phi(y)$.
Since $\mu_T$ is tame, Lemma~\ref{lem:qi} and Proposition~\ref{prop:qi1} imply that there is some
constant $C_1\ge 1$ such that
\[
\frac{1}{C_1} \mu_T (Cyl_{[x',y']}) \le \mu_T (Cyl_{[x,y]})\le C_1 \mu_T (Cyl_{[x',y']}).
\]
Thus
\[
\mu_T (Cyl_{[x',y']})\ge \frac{1}{C_1}\mu_T (Cyl_{[x,y]})\ge \frac{1}{C_1C} \exp( -h d_T(x,y)).
\]

On the other hand, since by assumption $h_{T'}(\mu_T)=h$, it follows
that for any $\epsilon>0$ there exists $C_2=C_2(\epsilon)\ge 1$ such that
\[
\mu_T (Cyl_{[x',y']})\le C_2 \exp( -(h-\epsilon) d_{T'}(x',y')).
\]

Thus
\[
\frac{1}{C_1C} \exp( -h d_T(x,y)) \le C_2 \exp( -(h-\epsilon) d_{T'}(x',y')).
\]
Hence
\[
h d_T(x,y)\ge (h-\epsilon) d_{T'}(x',y')-\log(C_2C_1C).
\]
and so
\[
\frac{h}{h-\epsilon} \ge \limsup_{d_T(x,y)\to\infty} \frac{d_{T'}(x',y')}{d_T(x,y)}.
\]
Since $\epsilon>0$ was arbitrary, it follows that
\[
\limsup_{d_T(x,y)\to\infty} \frac{d_{T'}(x',y')}{d_T(x,y)}\le 1.
\]
Therefore
\[
\sup_{g\in F\setminus\{1\}} \frac{||g||_{T'}}{||g||_T}\le 1.
\]
By Lemma~\ref{lem:stretch} this implies that there is a constant $C_3\ge 1$ such that
\[
d_T(x',y')\le d_{T}(x,y)+C_3.
\]
Then
\begin{gather*}
\mu_T(Cyl_{[x',y']})\le \frac{1}{C_1C} \exp( -h d_T(x,y))\le \\
\frac{1}{C_1C} \exp( -h (d_{T'}(x',y')
-C_3))=\frac{\exp(hC_3)}{C_1C}\exp(-h d_{T'}(x',y'))\le\\
\frac{C'\exp(hC_3)}{C_1C}\mu_{T'}(Cyl_{[x',y']}).
\end{gather*}
The above inequality holds for any $x',y'\in \phi(T)$ with $d_{T'}(x',y')\ge \lambda^2+\lambda$.
 Since $\phi$ is a quasi-isometry and $\mu_T$ is tame, it follows that there exists a constant $C'\ge 1$
 such that for any $x',y'\in T'$ with $d_{T'}(x',y')\ge 1$ we have
\[
\mu_T(Cyl_{[x',y']})\le C'\mu_{T'}(Cyl_{[x',y']}).
\]
Hence $\mu_T$ is absolutely continuous with respect to $\mu_{T'}$.
A result of Furman~\cite{Fur} now implies that the translation length functions $||.||_T$ and $||.||_{T'}$ are scalar multiples of each other, as required.
\end{proof}

\begin{cor}\label{cor:sharp}
Let $T_1,T_2\in cv(F)$ be such two elements that do not lie in the same projective class. Let $\mu_{T_2}$ be a Patterson-Sullivan current for $T_2$.
Then $h_{T_1}(\mu_{T_2})< h(T_1)$.
\end{cor}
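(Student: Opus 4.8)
The plan is to argue by contradiction, deriving the strict inequality from Proposition~\ref{prop:proj} (a rigidity statement) together with the easy upper bound coming from Theorem~\ref{thm:vol}. First I would note that $h_{T_1}(\mu_{T_2})\le h(T_1)$: since $\mu_{T_2}\ne 0$ there is a vertex $x\in T_1$ with $(\mu_{T_2})_x\ne 0$, and Theorem~\ref{thm:vol} applied with $\mu=\mu_{T_2}$ and $T=T_1$ gives $h_{T_1}(\mu_{T_2})\le {\mathbf {HD}}_{\partial T_1}\big((\mu_{T_2})_x\big)\le h(T_1)$. It therefore remains only to exclude the equality $h_{T_1}(\mu_{T_2})=h(T_1)$.

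So suppose $h_{T_1}(\mu_{T_2})=h(T_1)$. I want to feed this into Proposition~\ref{prop:proj}, whose only obstruction is that it requires the two trees to have the same volume entropy, while $h(T_1)$ and $h(T_2)$ need not agree. The fix is to rescale: set $c:=h(T_2)/h(T_1)>0$ and $T_2':=cT_2\in cv(F)$. Directly from the definition of volume entropy recalled in the Introduction, $h(T_2')=h(T_2)/c=h(T_1)$; and since the Patterson--Sullivan measure, hence the Patterson--Sullivan current, depends only on the projective class of the tree, $\mu_{T_2}$ is also a Patterson--Sullivan current for $T_2'$. In particular, under our assumption, $h_{T_1}(\mu_{T_2})=h(T_1)=h(T_2')$.

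Now apply Proposition~\ref{prop:proj} with its ``$T$'' taken to be $T_2'$ and its ``$T'$'' taken to be $T_1$: here $h:=h(T_2')=h(T_1)$ and $h_{T_1}(\mu_{T_2})=h$, so the proposition yields that $T_2'$ and $T_1$ lie in the same projective class. Since $[T_2']=[T_2]$, this forces $[T_1]=[T_2]$, contradicting the hypothesis. Hence $h_{T_1}(\mu_{T_2})<h(T_1)$.

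I do not anticipate any real difficulty; the only point needing a line of care is the scaling normalization --- that $h(cT)=h(T)/c$ and that a Patterson--Sullivan current for $T$ is again one for $cT$ (equivalently, that the measure class of $\nu\times\nu$ entering the definition is unchanged when the metric on $T$ is rescaled) --- and both are immediate from the definitions. (As an alternative to invoking Theorem~\ref{thm:vol} for the upper bound, one could instead start from the identity $h_{T_1}(\mu_{T_2})=h(T_2)\inf_{f\ne 1}\frac{||f||_{T_2}}{||f||_{T_1}}$ of Theorem~\ref{thm:comp1} and combine it with the same rescaling and Proposition~\ref{prop:proj}.)
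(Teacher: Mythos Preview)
Your proposal is correct and follows essentially the same approach as the paper: rescale $T_2$ so that its volume entropy matches $h(T_1)$, use that the Patterson--Sullivan current depends only on the projective class, and then combine Theorem~\ref{thm:vol} with Proposition~\ref{prop:proj} to rule out equality. The paper's proof is simply a terser version of what you wrote.
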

\begin{proof}
 After replacing $T_2$  by a scalar multiple of $T_2$ we may assume that $h(T_1)=h(T_2)$. Note that the projective Patterson-Sullivan current depends only on the projective class of an element of $cv(F)$, so that this replacement does not change $\mu_{T_2}$.
Now the statement of the corollary follows immediately from  Theorem~\ref{thm:vol} and Proposition~\ref{prop:proj}.
\end{proof}

\begin{cor}
Let $T,T'\in cv(F)$. Then
\begin{enumerate}
\item
\[
\inf_{g\in F\setminus\{1\}} \frac{||g||_T}{||g||_{T'}}\le \frac{h(T')}{h(T)}\le \sup_{g\in F\setminus\{1\}} \frac{||g||_T}{||g||_{T'}}.
\]
\item Suppose that $T$ and $T'$ are not in the same projective class. Then
\[
\inf_{g\in F\setminus\{1\}} \frac{||g||_T}{||g||_{T'}}< \frac{h(T')}{h(T)}< \sup_{g\in F\setminus\{1\}} \frac{||g||_T}{||g||_{T'}}.
\]
\end{enumerate}
\end{cor}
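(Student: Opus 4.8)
The plan is to obtain the corollary as a formal consequence of three earlier results: the exact formula of Theorem~\ref{thm:comp1}, the general upper bound $h_S(\nu)\le h(S)$ contained in Theorem~\ref{thm:vol}, and the strict version of that bound given by Corollary~\ref{cor:sharp}. Fix Patterson-Sullivan currents $\mu_T,\mu_{T'}\in Curr(F)$ for $T$ and $T'$. Applying Theorem~\ref{thm:comp1} twice, once to the ordered pair $(T,T')$ and once to $(T',T)$, records the two identities
\[
h_{T'}(\mu_T)=h(T)\inf_{f\in F\setminus\{1\}}\frac{||f||_T}{||f||_{T'}},\qquad
h_T(\mu_{T'})=h(T')\inf_{f\in F\setminus\{1\}}\frac{||f||_{T'}}{||f||_T}.
\]
Since $\mu_T\ne 0$, there is a point $x\in T'$ with $(\mu_T)_x\ne 0$ (as noted in the Introduction), so Theorem~\ref{thm:vol}, applied to the current $\mu_T$ with respect to $T'$, gives $h_{T'}(\mu_T)\le h(T')$; symmetrically $h_T(\mu_{T'})\le h(T)$.

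For part~(1): substituting the first identity into $h_{T'}(\mu_T)\le h(T')$ and dividing by $h(T)>0$ gives $\inf_{g\in F\setminus\{1\}}\frac{||g||_T}{||g||_{T'}}\le \frac{h(T')}{h(T)}$, which is the left-hand inequality. Likewise the second identity combined with $h_T(\mu_{T'})\le h(T)$ gives $\inf_{f\in F\setminus\{1\}}\frac{||f||_{T'}}{||f||_T}\le \frac{h(T)}{h(T')}$. Since $T$ and $T'$ are $F$-equivariantly quasi-isometric, $\inf_g\frac{||g||_{T'}}{||g||_T}>0$ (for a $(\lambda,\lambda)$-quasi-isometry this infimum is at least $1/\lambda$, by passing to the limit along powers of an element as in the proof of Lemma~\ref{lem:stretch}), so the elementary identity $\sup_g\frac{||g||_T}{||g||_{T'}}=\big(\inf_g\frac{||g||_{T'}}{||g||_T}\big)^{-1}$ holds. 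Taking reciprocals in the last inequality then yields $\frac{h(T')}{h(T)}\le \sup_g\frac{||g||_T}{||g||_{T'}}$, the right-hand inequality.

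For part~(2), assume $[T]\ne[T']$. Then Corollary~\ref{cor:sharp}, applied to the pairs $(T',T)$ and $(T,T')$, upgrades the two bounds obtained above to the strict inequalities $h_{T'}(\mu_T)<h(T')$ and $h_T(\mu_{T'})<h(T)$. Repeating verbatim the computation of part~(1), now with strict inequalities, produces $\inf_g\frac{||g||_T}{||g||_{T'}}<\frac{h(T')}{h(T)}$ and $\frac{h(T')}{h(T)}<\sup_g\frac{||g||_T}{||g||_{T'}}$. Each step here is a straightforward substitution, so I anticipate no genuine obstacle; the only point that needs a moment's care is the passage via reciprocals between $\inf_g\frac{||g||_{T'}}{||g||_T}$ and $\sup_g\frac{||g||_T}{||g||_{T'}}$, which is legitimate precisely because these extrema are strictly positive and, by the Whitehead-type finiteness results recalled after Theorem~\ref{A}, are in fact attained.
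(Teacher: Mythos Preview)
Your proof is correct and follows essentially the same route as the paper: apply Theorem~\ref{thm:comp1} to $(T,T')$ and to $(T',T)$, combine with the bound $h_{T'}(\mu_T)\le h(T')$ from Theorem~\ref{thm:vol} (respectively the strict bound from Corollary~\ref{cor:sharp} for part~(2)), and pass to the other inequality via the reciprocal identity between $\inf_g\frac{||g||_{T'}}{||g||_T}$ and $\sup_g\frac{||g||_T}{||g||_{T'}}$. Your write-up is in fact slightly more careful than the paper's in justifying the hypotheses of Theorem~\ref{thm:vol} and the legitimacy of the reciprocal step.
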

\begin{proof} $ $

\noindent (1) Let $\mu_T$ be a Patterson-Sullivan current corresponding to $T$.
By Theorem~\ref{thm:comp1} and Theorem~\ref{thm:vol} we have
\[
h(T)\inf_{g\in F\setminus\{1\}} \frac{||g||_T}{||g||_{T'}}=h_{T'}(\mu_T)\le h(T')
\]
and hence \[\inf_{g\in F\setminus\{1\}} \frac{||g||_T}{||g||_{T'}}\le \frac{h(T')}{h(T)}.\]

A symmetric argument shows that \[\inf_{g\in F\setminus\{1\}} \frac{||g||_{T'}}{||g||_{T}}\le \frac{h(T)}{h(T')}.\]
Clearly,
\[
\inf_{g\in F\setminus\{1\}} \frac{||g||_{T'}}{||g||_{T}}=\frac{1}{\displaystyle\sup_{g\in F\setminus\{1\}} \frac{||g||_T}{||g||_{T'}}}
\]
and hence
\[
\sup_{g\in F\setminus\{1\}} \frac{||g||_T}{||g||_{T'}}\ge \frac{h(T')}{h(T)},\]
as required.

The proof of part (2) is exactly the same, but Corollary~\ref{cor:sharp} implies that all the inequalities involved are now strict.
\end{proof}

\section{Relation to measure-theoretic entropy}

In this section we relate the geometric entropy of a current $\mu\in Curr(F)$ with respect to $T=\widetilde \Gamma$, where $\Gamma$ is a finite graph with simplicial metric (every edge has length one), to the measure-theoretic entropy of the corresponding shift-invariant measure $\widehat\mu$ on the (appropriately defined) geodesic flow space of the graph $\Gamma$.

\begin{defn}\label{defn:Omega}
Let $\alpha:F\to \pi_1(\Gamma, p)$ be a simplicial chart. Let $T=\widetilde \Gamma$. We endow both $\Gamma$ and $T$ with simplicial metrics by giving each edge length one. Thus $T\in cv(F)$.
Define $\Omega(\Gamma)$ to be the set of all semi-infinite reduced edge-paths
\[
\gamma=e_1,e_2,\dots, e_n,\dots
\]
in $\Gamma$. Note that $\Omega(\Gamma)$ is naturally identified with the disjoint union of $\#V\Gamma$ copies of $\partial T$, corresponding to the $\#V\Gamma$ different possibilities of the initial vertex of $\gamma\in \Omega(\Gamma)$. We topologize $\Omega(\Gamma)$ accordingly. Thus, topologically, $\Omega(\Gamma)$ is a disjoint union of $\#V\Gamma$ copies of the Cantor set.

There is a natural shift transformation $\sigma:\Omega(\Gamma)\to\Omega(\Gamma)$ defined as
\[
\sigma(e_1,e_2,e_3,\dots)=e_2,e_3,\dots
\]
for every $\gamma=e_1,e_2,e_3,\dots \in \Omega(\Gamma)$. Then $\sigma$ is a continuous transformation and the pair $(\Omega(\Gamma),\sigma)$ is easily seen to be an irreducible subshift of finite type, where the alphabet is the set of oriented edges of $\Gamma$.
\end{defn}

The space $\Omega(\Gamma)$ has a natural set of cylinder sets which generate its Borel sigma-algebra: for every nontrivial reduced edge-path $v$ in $\Gamma$ let
$Cyl_v$ be the set of all $\gamma\in \Omega(\Gamma)$ that have $v$ as an initial segment. It is easy to see that there is a natural affine isomorphism between the space of geodesic currents $Curr(F)$ and the space $\mathcal M(\Gamma)$ of all positive $\sigma$-invariant Borel measures on $\Omega(\Gamma)$ of finite total mass. For a current $\mu\in Curr(F)$ the corresponding measure $\widehat\mu\in \mathcal M(\Gamma)$ is defined by the following condition. For every nontrivial reduced edge-path $v$ in $\Gamma$, let $[x,y]\subseteq T$ be any lift of $v$ to $T$. Then
\[
\widehat\mu(Cyl_v)=\mu(Cyl_{[x,y]}).
\]

Suppose now that $\mu\in Curr(F)$ is normalized so that the corresponding measure $\widehat\mu\in \Omega(\Gamma)$ is a probability measure (recall that multiplication by a positive scalar does not change the geometric entropy). For the probability measure $\widehat\mu$ one can consider its classical \emph{measure-theoretic} entropy $\hbar(\widehat \mu)$ with respect to the shift $\sigma$, also known as the \emph{Kolmogorov-Sinai entropy} or \emph{metric entropy}, see e.g. \cite{Kitchens}.

\begin{thm}\label{thm:ks}
Let $\alpha:F\to \pi_1(\Gamma, p)$ be a simplicial chart. Let $T=\widetilde \Gamma$ and endow both $\Gamma$ and $T$ with simplicial metric.
Let $(\Omega(\Gamma),\sigma)$ be as in Definition~\ref{defn:Omega}.
Let $\mu\in Curr(F)$ be such that the corresponding measure $\widehat\mu\in \mathcal M(\Gamma)$ is a probability measure.
Then
\[
h_{T}(\mu)\le \hbar(\widehat\mu)\le h_{topol}(\sigma)=h(T).
\]
\end{thm}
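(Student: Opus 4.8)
The plan is to prove the chain in three pieces, only the last of which carries real content. The equality $h_{topol}(\sigma)=h(T)$ and the bound $\hbar(\widehat\mu)\le h_{topol}(\sigma)$ are standard facts about irreducible subshifts of finite type, and I would just cite them. Since $\Gamma$ is finite and connected with no degree-one vertices and $T=\widetilde\Gamma$ carries the simplicial metric, the topological entropy of $(\Omega(\Gamma),\sigma)$ equals $\lim_{n\to\infty}\tfrac{1}{n}\log N_n$, where $N_n$ is the number of reduced edge-paths of length $n$ in $\Gamma$ (equivalently, the logarithm of the Perron--Frobenius eigenvalue of the edge-transition matrix of $\Gamma$). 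On the other hand, for the basepoint $x_0\in T$ lying over $p$, the set $\{g\in F: d_T(x_0,gx_0)\le R\}$ is in natural bijection with the set of reduced edge-loops at $p$ of length $\le R$, and by irreducibility this grows at the same exponential rate; hence $h(T)=h_{topol}(\sigma)$ (see \cite{Coor,Kitchens}). The bound $\hbar(\widehat\mu)\le h_{topol}(\sigma)$ is the easy half of the variational principle, valid for every shift-invariant probability measure \cite{Kitchens}.

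The substantive inequality is $h_T(\mu)\le\hbar(\widehat\mu)$. The first step is to rewrite $h_T(\mu)$ combinatorially: since every edge of $\Gamma$ has $\mathcal L$-length one, the combinatorial description of geometric entropy from Section~\ref{sect:ge}, together with $\langle v,\mu\rangle_\alpha=\widehat\mu(Cyl_v)$, gives
\[
h_T(\mu)=\liminf_{|v|\to\infty}\frac{-\log\widehat\mu(Cyl_v)}{|v|}=\liminf_{n\to\infty} m_n,\qquad m_n:=\min\Bigl\{\tfrac{-\log\widehat\mu(Cyl_v)}{n}\ :\ v\in\mathcal P(\Gamma),\ |v|=n,\ \widehat\mu(Cyl_v)>0\Bigr\},
\]
where the second equality is immediate from the definition of $\liminf$, and the minimum is over a nonempty finite set since $\sum_{|v|=n}\widehat\mu(Cyl_v)=\widehat\mu(\Omega(\Gamma))=1$; note also $m_n\ge 0$ because $\widehat\mu(Cyl_v)\le 1$.

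Then comes the key step. Let $\mathcal A=\{Cyl_e: e\in E\Gamma\}$ be the finite partition of $\Omega(\Gamma)$ into one-edge cylinders; it is a generating partition for $(\Omega(\Gamma),\sigma)$, and the (nonempty) atoms of $\bigvee_{i=0}^{n-1}\sigma^{-i}\mathcal A$ are exactly the cylinders $Cyl_v$ with $v\in\mathcal P(\Gamma)$, $|v|=n$. Writing $H(\cdot)$ for the Shannon entropy of a finite measurable partition, set
\[
H_n:=H\Bigl(\bigvee_{i=0}^{n-1}\sigma^{-i}\mathcal A\Bigr)=-\sum_{|v|=n}\widehat\mu(Cyl_v)\log\widehat\mu(Cyl_v),
\]
the sum being over all $v\in\mathcal P(\Gamma)$ of length $n$. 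Since $\widehat\mu$ is $\sigma$-invariant, the sequence $(H_n)$ is subadditive, so by Fekete's lemma and the Kolmogorov--Sinai theorem $\hbar(\widehat\mu)=\lim_n H_n/n=\inf_{n\ge1} H_n/n$. Now $H_n$ is a convex combination of the numbers $-\log\widehat\mu(Cyl_v)$, $|v|=n$, with weights $\widehat\mu(Cyl_v)$ that sum to $1$, hence $H_n\ge\min_{|v|=n,\,\widehat\mu(Cyl_v)>0}\bigl(-\log\widehat\mu(Cyl_v)\bigr)=n\,m_n$. Dividing by $n$ and passing to $\liminf_n$ yields $h_T(\mu)=\liminf_n m_n\le\lim_n H_n/n=\hbar(\widehat\mu)$, which together with the first paragraph gives the full chain.

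The only part I expect to need genuine care is the combinatorial rewriting of $h_T(\mu)$: passing from the definition, which quantifies over arbitrary points $x,y\in T$, to a $\liminf$ over integer lengths of reduced edge-paths in $\Gamma$ --- this is exactly where the simplicial-metric hypothesis is used. Everything afterwards is elementary; the decisive point, that the per-symbol measure-theoretic entropy $H_n/n$ dominates the worst-case exponential decay rate $m_n$, is just ``minimum $\le$ weighted average'' applied to the numbers $-\log\widehat\mu(Cyl_v)$, combined with subadditivity of $(H_n)$. (Alternatively, $h_T(\mu)\le\hbar(\widehat\mu)$ can be obtained from the Shannon--McMillan--Breiman theorem applied to the ergodic components of $\widehat\mu$, but the argument above avoids that machinery.)
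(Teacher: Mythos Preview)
Your proposal is correct and follows essentially the same approach as the paper: both use the generating partition $\{Cyl_e:e\in E\Gamma\}$, express $\hbar(\widehat\mu)$ as $\lim_n H_n/n$ with $H_n=-\sum_{|v|=n}\widehat\mu(Cyl_v)\log\widehat\mu(Cyl_v)$, and then exploit the ``minimum $\le$ weighted average'' inequality to bound $H_n/n$ below by the minimal decay rate at level $n$. The only cosmetic difference is that the paper phrases the last step via an auxiliary $s<h_T(\mu)$ (so that eventually $-\log\widehat\mu(Cyl_v)\ge sn$ for all $v$ of length $n$, giving $H_n/n\ge s$ and hence $\hbar(\widehat\mu)\ge s$), whereas you write the inequality $H_n/n\ge m_n$ directly and pass to the $\liminf$; these are equivalent.
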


\begin{proof}
The fact that $h_{topol}(\sigma)=h(T)$ is a straightforward exercise which easily follows from the definitions of both quantities.
The fact that $\hbar(\widehat\mu)\le h_{topol}(\sigma)$ is also completely general for subshifts of finite type (see e.g. Ch 6. of \cite{Kitchens}).
If $h_T(\mu)=0$ then the inequality $h_T(\mu)\le \hbar(\widehat\mu)$ is obvious. Suppose now that $h_T(\mu)>0$.

Note that
\[
\Omega(\Gamma)=\sqcup_{e\in E\Gamma} Cyl_e
\]
is a generating partition with respect to $\sigma$. Therefore the measure-theoretic entropy of $\widehat\mu$ can be computed using this partition and is easily seen to be
\[
\hbar(\widehat\mu)=\lim_{n\to\infty} \sum_{|v|=n}-\frac{\widehat\mu(Cyl_v)\log\widehat\mu(Cyl_v)}{n}.
\]
Suppose now that $0<s<h_T(\mu)$. Then there exists $n_0\ge 1$ such that for any geodesic edge-path $[x,y]$ in $T$ of length $n\ge n_0$ we have
\[
\mu(Cyl_{[x,y]})\le \exp(-sn).
\]
Hence for any reduced edge-path $v$ in $\Gamma$ of length $n\ge n_0$ we have
$\widehat\mu(Cyl_v)\le \exp(-sn)$ and $-\log\widehat\mu(Cyl_v)\ge sn$.
Therefore from the above formula for $\hbar(\widehat\mu)$ we get
\[
\hbar(\widehat\mu)\ge \lim_{n\to\infty}\sum_{|v|=n}\frac{\widehat\mu(Cyl_v) sn}{n}=s,
\]
since for every $n\ge 1$ we have $\Omega(\Gamma)=\sqcup_{|v|=n} Cyl_v$ and so $\sum_{|v|=n}\widehat\mu(Cyl_v)=1$.
Thus we see that for every $0<s<h_T(\mu)$ we have $\hbar(\widehat\mu)\ge s$. Therefore $\hbar(\widehat\mu)\ge h_T(\mu)$ as claimed.

\end{proof}

It is also well-known (again see, for example, Ch. 6 of \cite{Kitchens}) that for irreducible subshifts of finite type
such as $(\Omega(\Gamma),\sigma)$ in Theorem~\ref{thm:ks}, there exists a unique probability
measure $\mu_\Omega\in \mathcal M(\Gamma)$ such that $\hbar(\mu_\Gamma)=h_{topol}(\sigma)=h(T)$.
The measure $\mu_\Omega$ is known as the \emph{measure of maximal entropy}.
We already know that for the Patterson-Sullivan current $\mu_T$ we have $h_T(\mu_T)=h(T)$. Thus,
if $\mu_T$ is normalized so that the corresponding measure $\widehat\mu_T\in \mathcal M(\Gamma)$ is
a probability measure, then by Theorem~\ref{thm:ks} we have $h_T(\mu_T)=\hbar(\widehat\mu_T)=h(T)$.
Hence $\widehat\mu_T$ is the unique measure of maximal geometric entropy $\mu_\Omega$.

\begin{cor}\label{cor:ks} Let $\alpha$, $\Gamma$ and $T$ be as in Theorem~\ref{thm:ks}. Let $\mu_T\in Curr(F)$ be the Patterson-Sullivan current for $T$ normalized so that the corresponding measure $\widehat\mu_T\in \mathcal M(\Gamma)$ is a probability measure.
Then 
\begin{enumerate}
\item $\hbar(\widehat\mu_T)=h(T)$, so that $\widehat\mu_T=\mu_\Omega$, the unique probability measure of maximal entropy;
\item for $\mu\in Curr(F)$ we have $h_T(\mu)=h(T)$ if and only if $\mu$ is proportional to $\mu_T$.
\end{enumerate}
\end{cor}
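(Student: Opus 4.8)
The plan is to obtain both assertions as formal consequences of Theorem~\ref{thm:ks}, Corollary~\ref{cor:ps}, and the classical fact that an irreducible subshift of finite type carries a \emph{unique} shift-invariant probability measure of maximal entropy, which here is $\mu_\Omega$.

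For part (1), I would feed $\mu=\mu_T$ into Theorem~\ref{thm:ks}. Corollary~\ref{cor:ps} gives $h_T(\mu_T)=h(T)$, so the chain
\[
h(T)=h_T(\mu_T)\le \hbar(\widehat\mu_T)\le h_{topol}(\sigma)=h(T)
\]
collapses to equalities; in particular $\hbar(\widehat\mu_T)=h_{topol}(\sigma)$. Since $\widehat\mu_T$ is a shift-invariant probability measure on $(\Omega(\Gamma),\sigma)$ and the latter is an irreducible subshift of finite type, uniqueness of the measure of maximal entropy forces $\widehat\mu_T=\mu_\Omega$.

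For part (2), the ``if'' direction is immediate from scale-invariance of the geometric entropy: if $c\mu=\mu_T$ for some $c>0$, then $h_T(\mu)=h_T(c\mu)=h_T(\mu_T)=h(T)$. For the converse, assume $h_T(\mu)=h(T)$; since $h_T$ depends only on the projective class of $\mu$, I may rescale $\mu$ so that the associated measure $\widehat\mu\in\mathcal M(\Gamma)$ is a probability measure. Then Theorem~\ref{thm:ks} yields
\[
h(T)=h_T(\mu)\le \hbar(\widehat\mu)\le h_{topol}(\sigma)=h(T),
\]
hence $\hbar(\widehat\mu)=h_{topol}(\sigma)$, and uniqueness of the measure of maximal entropy together with part (1) gives $\widehat\mu=\mu_\Omega=\widehat\mu_T$. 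Because $\mu\mapsto\widehat\mu$ is an affine isomorphism $Curr(F)\to\mathcal M(\Gamma)$, and in particular injective, I conclude $\mu=\mu_T$ for the chosen normalization, i.e.\ $\mu$ is proportional to $\mu_T$ in general.

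The argument has no real obstacle: the substantive inequality $h_T(\mu)\le\hbar(\widehat\mu)$ and the identity $h_{topol}(\sigma)=h(T)$ were already established in Theorem~\ref{thm:ks}, and $h_T(\mu_T)=h(T)$ in Corollary~\ref{cor:ps}; the only remaining ingredient is a standard theorem on irreducible subshifts of finite type. The single point deserving care is the interplay of normalizations --- geometric entropy is projective while measure-theoretic entropy requires a probability measure --- so one must rescale before invoking Theorem~\ref{thm:ks} and then pass back to projective classes at the end.
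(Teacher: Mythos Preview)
Your proof is correct and follows essentially the same approach as the paper: the paper's argument (given in the paragraph preceding the corollary) is precisely to feed $\mu_T$ into Theorem~\ref{thm:ks}, use $h_T(\mu_T)=h(T)$ from Corollary~\ref{cor:ps} to collapse the chain of inequalities, and then invoke uniqueness of the measure of maximal entropy for irreducible subshifts of finite type. Your treatment of part~(2), including the careful handling of normalizations and the use of injectivity of $\mu\mapsto\widehat\mu$, spells out details the paper leaves implicit but is exactly the intended argument.
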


Theorem~\ref{thm:ks} and Corollary~\ref{cor:ks} yield Theorem~\ref{E} from the Introduction.

\end{document}